\documentclass[a4paper]{amsart}

\usepackage{amssymb}
\usepackage{amsthm}  
\usepackage{amsmath} 
\usepackage{amscd} 
\usepackage{float}
\usepackage[all]{xypic}
\usepackage{url}
\usepackage{hyperref}
\usepackage{color}
\hypersetup{linktocpage}

\theoremstyle{plain}

\newtheorem{Theorem}{Theorem}[section]
\newtheorem{Proposition}[Theorem]{Proposition}
\newtheorem{Lemma}[Theorem]{Lemma}
\newtheorem{Corollary}[Theorem]{Corollary}

\theoremstyle{definition}
\newtheorem{Definition}{Definition}
\newtheorem{Example}{Example}

\theoremstyle{remark}
\newtheorem{Remark}{Remark}

\setcounter{MaxMatrixCols}{15}

\newcommand{\om}{\omega}

\newcommand{\ba}{\mathcal{G}}

\newcommand{\fg}{\frak g}
\newcommand{\fp}{\frak p}
\newcommand{\fk}{\frak k}

\newcommand{\fh}{\frak h}
\newcommand{\fn}{\frak n}
\newcommand{\fa}{\frak a}
\newcommand{\fb}{\frak b}
\newcommand{\infi}{\frak{inf}}
\newcommand{\sinf}{\Sigma^{\frak{inf}(u_0)}}
\newcommand{\Ad}{{\rm Ad}}
\newcommand{\Fl}{{\rm Fl}}
\newcommand{\Aut}{{\rm Aut}}
\newcommand{\id}{{\rm id}}

\newcommand{\gr}{{\rm gr}}
\newcommand{\lz}{[\![}
\newcommand{\pz}{]\!]}

\newcommand{\Imm}{{\rm im}}

\begin{document}
\title{On automorphisms with natural tangent actions on homogeneous parabolic geometries}
\author{Jan Gregorovi\v c and Lenka Zalabov\' a}
\address{Jan Gregorovi\v c \\ Department of Mathematics\\ Aarhus University  \\ Ny Munkegade 118 \\  Aarhus C 8000 \\ Denmark \\and\\Department of Mathematics\\ and Statistic\\ Faculty of Sciences\\  Masaryk University\\ Kotl\' a\v rsk\' a 2\\ Brno 611 37\\ Czech Republic\\ jan.gregorovic@seznam.cz}
\address{Lenka Zalabov\' a \\ Institute of Mathematics and Biomathematics \\ Faculty of Science \\ University of South Bohemia \\ Brani\v sovsk\' a 31 \\ \v Cesk\' e Bud\v ejovice 370 05 \\  Czech Republic \\ lzalabova@gmail.com}
\keywords{parabolic geometries, homogeneous spaces, automorphisms with fixed points, harmonic curvature restrictions}
\subjclass[2010]{53C10; 53C30, 58J70}
\thanks{ First author supported by the project CZ.1.07/2.3.00/20.0003 of the Operational Programme Education for
Competitiveness of the Ministry of Education, Youth and Sports of the Czech
Republic. Second author supported by the grant P201/11/P202 of the Czech Science Foundation (GA\v CR)
}
\maketitle
\begin{abstract} 
We consider automorphisms of homogeneous parabolic geometries with a fixed point. Parabolic geometries carry the distinguished distributions and we study those automorphisms which enjoy natural actions on the distributions at the fixed points. We describe the sets of such automorphisms on homogeneous parabolic geometries in detail and classify, whether there are non--flat homogeneous parabolic geometries carrying such automorphisms. We present two general constructions of such geometries and we provide complete classifications for the types $(G,P)$ of the parabolic geometries that have $G$ simple and the automorphisms are of order $2$.
\end{abstract}

\section*{Introduction}

In this article, we will investigate automorphisms of Cartan geometries $(\pi: \ba\to M,\om)$ of type $(G,P)$ over a connected smooth manifold that have fixed points and a natural tangent actions at that points. The crucial fact we will use is that the choice of a point $u\in \ba$ in the fiber over the fixed point allows us to identify such automorphisms with elements of $P$. In particular, we can view $u$ as a (higher order) frame of $T_{\pi(u)}M$, and the elements of $P$ then prescribe the action of the automorphisms in this frame. The reduction to the first order frame of $T_{\pi(u)}M$ is then represented by the Lie group homomorphism $P\to Gl(\fg/\fp)$ induced by the adjoint action. We would like to investigate automorphisms with natural tangent actions that do not depend on the choice of a preferred frame. In other words, we are interested in the elements that are mapped into the center of the image of the above Lie group homomorphism $P\to Gl(\fg/\fp)$.

The affine symmetric spaces are our main motivation for the investigation of such automorphisms. Indeed, the unique non--trivial choice of a natural tangent action for the affine Cartan geometries  is $-\id_{\fg/\fp}$, i.e., the action of the geodesic symmetry. Thus the investigation of such automorphisms for  Cartan geometries of an arbitrary type will give natural generalizations of symmetric spaces.

In this article, we will focus on regular parabolic geometries. These are Cartan geometries of type $(G,P)$, where $G$ is semisimple and $P$ is parabolic subgroup of $G$, 
and which are equivalent to a specific underlying geometric structures on $M$. The geometric data, which describe the underlying geometric structures depend on the type $(G,P)$. But in all cases, there is a distinguished maximally non--integrable distribution $T^{-1}M$ in $TM$, which is invariant under the action of all automorphisms. The remaining geometric data then restrict the possible frames of $T^{-1}M$ and admissible (partial) connections on $T^{-1}M$. Thus we investigate the automorphisms with a fixed point $x\in M$ and a natural action on $T^{-1}_xM$ instead of the whole $T_xM$.

In particular, we investigate, which types of parabolic geometries admit non--flat homogeneous examples carrying automorphisms acting by a natural action on $T^{-1}_xM$. We will see that there are more possible natural actions than in the case of affine geometries.

In the first section, we recall basic facts on Cartan and parabolic geometries and their automorphisms. We distinguish the automorphisms according to their type $\Sigma$ (see Definition \ref{symm-flat}). In particular, we consider the types $\Sigma_J$, which distinguish automorphisms with the same natural action $J$ on $T^{-1}_xM$  (see Definition \ref{def2}). Further, we distinguish the types of automorphisms, which generalize the geodesic symmetries of the affine symmetric spaces to parabolic geometries, and we call them generalized symmetries (see Definition \ref{def3}). We also introduce all necessary notations and assumptions we use in this article.

We recall the structure of homogeneous parabolic geometries and the construction of them in the second section. We also describe the basic properties of automorphisms of arbitrary type $\Sigma$ on homogeneous parabolic geometries and discuss how to compute all of them (see Theorem \ref{lemma3.10}).

In the third section, we focus on the automorphisms of type $\Sigma_J$. We describe them in detail and classify all possible natural actions $J$ on $T^{-1}_xM$ that can arise (see Proposition \ref{lambda-action}). The main result of this section is the Theorem \ref{2.5}, which describes the set of all automorphisms of type $\Sigma_J$.

The fourth section contains  the results, which relate the existence of automorphisms of type $\Sigma_J$ to the existence of generalized symmetries (see Theorem \ref{homsym-central}). This allows us to describe the curvature restrictions on these geometries in detail. Conversely, we give the restriction on the number of the generalized symmetries given by the existence of a non--trivial harmonic curvature (see Theorem \ref{curvrest}).

We describe important classes of generalized symmetries in the fifth section. We show that these classes are in many cases closely related to symmetric spaces, and that the generalized symmetries in these cases are often coverings of the geodesic symmetries.

We describe two general constructions of non--trivial examples of homogeneous parabolic geometries with generalized symmetries in the sixth section. Moreover, we use the constructions to give several remarkable examples explicitly.

In the last section, we provide tables with the complete classifications for the types $(G,P)$ of the parabolic geometries that have $G$ simple, the automorphisms are of order $2$ and that admit non--flat homogeneous parabolic geometries with these types of generalized symmetries.

\section{Structure of parabolic geometries and their automorphisms}

{\bf Basic facts on Cartan geometries.}
Before we start to study the parabolic geometries, let us recall several general facts on Cartan geometries of an arbitrary type from \cite{parabook,S} we will use later. In particular, we discuss the automorphisms with a natural action on $TM$.

In the article, we will always consider a Cartan geometry $(\pi: \ba\to M,\om)$ of type $(G,P)$ over a connected smooth manifold $M$ with the curvature (function) $\kappa:\ba\to \wedge^2 (\fg/\fp)^*\otimes \fg.$ We denote by $\Aut(\ba,\om)$ the automorphism group of the Cartan geometry $(\pi: \ba\to M,\om)$. Let us summarize here the crucial facts about $\Aut(\ba,\om)$ we mentioned in the Introduction.

\begin{enumerate} \item[{\bf Fact}]
The choice of $u\in \ba$ provides the inclusion $\Aut(\ba,\om)\to \ba$ given by the evaluation of each automorphisms at $u$.
\end{enumerate}

In particular, the transition $s\in P$ between the frames $u$ and $\phi(u)=u\cdot s$ completely describes the action of the automorphisms $\phi$ with the fixed point $\pi(u)$. Since the Cartan connection $\om$ identifies $TM$ with $\ba\times_{\underline{\Ad}(P)}\fg/\fp$, where $\underline{\Ad}$ is the truncated adjoint action on the quotient, the automorphism $\phi$ acts as $$T_{\pi(u)}\phi.\lz u,X+\fp\pz=\lz u,\underline{\Ad}_s(X+\fp)\pz$$ on $T_{\pi(u)}M$.

The consequence of the first Fact is that we can identify the Lie subgroup in $\Aut(\ba,\om)$ of all automorphisms $\phi$ with fixed point $\pi(u)$ with the Lie subgroup $A_u\subset P$ of all transitions between $u$ and $\phi(u)$. Then $\underline{\Ad}$ is a Lie group homomorphism of $A_u$ to $Gl(\fg/\fp)$, and the kernel of $\underline{\Ad}$ describes the automorphisms with the same tangent action in the frame $u$. If we change the frame $u$ to $u\cdot p$ for $p\in P$, then $p^{-1}sp$ is the element of $A_{up}$ representing the action of automorphism $\phi$. We will use the following notion for automorphisms with the distinguished action in some frame in the fiber over $\pi(u)$.

\begin{Definition} \label{symm-flat}
Let $(\pi: \ba\to M,\om)$ be a Cartan geometry of type $(G,P)$ and $\Sigma$ be a subset of $P$ stable under the conjugation by elements of $P$. We say that an automorphism represented by $s\in A_u$ as above is an \emph{automorphism of type $\Sigma$} at $\pi(u)$ if $s\in \Sigma$. For a fixed $u$, we call the elements of $A_u$ automorphisms, too.
\end{Definition}

Since the set $\Sigma$ is stable under the conjugation, the definition is independent of the actual choice of the frame $u$. On the other hand, the automorphisms of the same type $\Sigma$ at $\pi(u)$ can have different tangent actions in the given frame $u$, because a priory $\underline{\Ad}_s\neq \underline{\Ad}_{p^{-1}sp}$. However, there are automorphisms, which share the same tangent action independently of the choice of the frame $u$. These are given by the elements of the center $Z(\underline{\Ad}(P))$ of the image of $\underline{\Ad}$. Such tangent actions are natural, and we can consider the types of automorphisms given by the preimage of elements in $Z(\underline{\Ad}(P))$. Let us demonstrate this on affine Cartan geometries.

\begin{Example}
Let us consider affine geometries, i.e., Cartan geometries of type $(\mathbb{R}^n\rtimes Gl(n,\mathbb{R}),Gl(n,\mathbb{R}))$. Then $\mathbb{R}^n$ can be identified with $\fg/\fp$ in a canonical way, and $\underline{\Ad}$ is injective. In particular, the types of automorphisms with natural tangent actions correspond  to $\Sigma=\{a\cdot \id_{\mathbb{R}^n}\}\subset Z(Gl(n,\mathbb{R}))$ for $a\in \mathbb{R}$. Since the automorphisms of the Cartan geometry are exactly the affine transformations, they have to preserve the torsion $T$ and the curvature $R$ of the corresponding affine connection. It is clear that the automorphism of type $\{a\cdot \id_{\mathbb{R}^n}\}$ at $\pi(u)$ act as $\frac1a$ on $T(\pi(u))$ and as $\frac{1}{a^2}$ on $R(\pi(u))$. Thus if we want to consider non--flat affine geometries with $a\cdot \id_{\mathbb{R}^n}\in A_u$, then the only possible values of $a$ are $\pm 1$. So the only non--trivial affine transformations we can consider are the geodesic symmetries corresponding to $a=-1$. The well known fact is the following:

\begin{enumerate}\item[{\bf Fact}]
A homogeneous affine geometry such that the geodesic symmetry is at one (and thus at each) point of $M$ a (global) affine transformation is a symmetric space.
\end{enumerate}

Let us remark that for homogeneous affine geometries, the vanishing of the torsion is sufficient for $-\id_{\mathbb{R}^n}$ to be an affine transformation.
\end{Example}

\noindent
{\bf Structure of the type $(G,P)$ of parabolic geometries.}
Before we discuss the automorphisms of parabolic geometries with a natural action on the distinguished distribution $T^{-1}M$, let us make some assumption and recall the necessary technicalities on the structure of parabolic geometries. We follow here the book \cite{parabook}.

Let $\fg$ be the Lie algebra (or the underlying real Lie algebra) of the real (complex) semisimple Lie group $G$. Then we will denote by $\alpha_i$ the simple restricted roots of $\fg$, and we consider their ordering according to \cite[Appendix B]{parabook}. There is the following crucial fact about the structure of the type $(G,P)$ of parabolic geometries.

\begin{enumerate}\item[{\bf Fact}]
There is a bijection between standard parabolic subalgebras of $\fg$ and subsets $\Xi$ of simple restricted roots (which are stable under the complex conjugation $\sigma^*$).
Each parabolic subalgebra of $\fg$ is conjugated to a standard one.
\end{enumerate}

In this article, we will assume (without loss of generality) that the Lie algebra $\fp$ of $P$ is the standard parabolic subalgebra of $\fg$. Further, we will assume that the maximal normal subgroup of $G$ contained in $P$ is trivial, i.e., that the pair $(G,P)$ is effective. For such parabolic geometries, $\id_\ba$ is the unique automorphism acting as $\id$ on $M$.

We use the notation $P_{i_1,\dots,i_j}$ for the parabolic subgroup corresponding to the set $\Xi:=\{\alpha_{i_1},\dots,\alpha_{i_j}\}$. The set $\Xi$ defines a functional $ht_{\Xi}$ on the weight space of $\fg$ given on simple restricted roots by $ht_{\Xi}(\alpha_i)=1$ if $\alpha_i\in \Xi$ and $ht_{\Xi}(\alpha_i)=0$ otherwise.
We will denote by $\mu^\fg$ the highest root of $\fg$ and define $k:=ht_{\Xi}(\mu^\fg)$. Then the structure of $\fg$ is described as follows:
\begin{enumerate}\item[{\bf Fact}]
\begin{enumerate}
\item The functional $ht_{\Xi}$ defines a $|k|$--grading $\fg_{-k}\oplus \dots \oplus \fg_0 \oplus \dots \oplus \fg_{k}$ of $\fg$ by $\fg_{\alpha}\subset \fg_{ht_{\Xi}(\alpha)}$ for the restricted root space $\fg_{\alpha}$ of the root $\alpha$.
\item There is  an induced $P$--invariant filtration of $\fg$ of the form $\fg^{-k}\supset  \dots \supset  \fg^0=\fp \supset  \dots \supset  \fg^{k}$ given by $\fg^i:=\fg_{i}\oplus \dots \oplus \fg_{k}$.
\item The subgroup $P$ is contained in the subgroup of filtration preserving elements of $G$, and contains its component of identity.
\end{enumerate}
\end{enumerate}

There always are distinguished subalgebras $\fg_0$, $\fp_+:=\fg^1$ and $\fg_{-}=\fg_{-k} \oplus \dots \oplus \fg_{-1}$.  The parabolic group $P$ decomposes into a semidirect product of the Lie group $G_0$ with the Lie algebra $\fg_0$, which consists of elements of $P$ preserving the grading of $\fg$, and $\exp(\fp_+)$, where $G_0$ naturally acts by the adjoint action. Moreover, the grading induces a finer structure of $P$, and we can write each $p \in P$ uniquely as $p=g_0 \exp Z_1 \dots \exp Z_k$, where $g_0 \in G_0$ and $Z_i \in \fg_i$ for $i=1, \dots, k$. 

\noindent
{\bf Automorphisms with a natural action on $T^{-1}M$.}
The consequence of the structure of the pair $(G,P)$ is that all objects of the parabolic geometry $(\pi: \ba\to M,\om)$ of type $(G,P)$ are filtered. In particular, there is a filtration $T^{i}M$ of $TM$ which is induced by the filtration of $\fg/\fp$, and there always is a distinguished distribution $T^{-1}M$, the smallest piece of the filtration of $TM$, which is isomorphic to $\ba\times_{\underline{\Ad}(P)} \fg^{-1}/\fp$. As we mentioned before, the distribution $T^{-1}M$ is the basic part of the geometric data describing the underlying geometric structure, so it is natural to study automorphisms with a fixed point $\pi(u)$ and a natural action on $T^{-1}_{\pi(u)}M$. In particular, the preimage of $Z(\underline{\Ad}|_{\fg^{-1}/\fp}(P))$ in $P=G_0\rtimes \exp(\fp_+)$ is $Z(G_0)\rtimes \exp(\fp_+)$, because $\exp(\fp_+)$ acts trivially on $\fg^{-1}/\fp$.

\begin{Definition}\label{def2}
Let $J\in Z(\underline{\Ad}|_{\fg^{-1}/\fp}(P))$. We denote by $\Sigma_J$ the preimage of $J$ in $P$, i.e., $$\Sigma_J=\{s\exp(Z)\in Z(G_0)\rtimes \exp(\fp_+): \underline{\Ad}_{s}=J\}.$$
\end{Definition}

Thus we can fix an arbitrary frame $u$ and use the Lie group $A_u$ to study the Lie group of automorphisms fixing $\pi(u)$, because the results for the automorphisms of type $\Sigma_J$ does not depend on the actual choice of $u$. In particular, this choice of $u$ allows us to identify the Lie algebra of infinitesimal automorphisms $T_e\Aut(\ba,\om)$ with the linear subspace $\infi(u)\subset \fg$ given by the trivialization of $T_e\Aut(\ba,\om)\subset T_u\ba\cong \fg$. We note that the curvature $\kappa(u)$ measures the difference between the Lie algebra structure of $\fg$ and $T_e\Aut(\ba,\om)$. In particular, we will view $\infi(u)$ as the Lie algebra of $\Aut(\ba,\om)$, and denote by $\frak a_u\subset \infi(u)$ the Lie algebra of $A_u$.

Let us look on the action of automorphisms of type $\Sigma_J$ on the neighborhood of the fixed point $\pi(u)$. If we consider the normal coordinates $\pi\circ \Fl_1^{\om^{-1}(X)}(u)$ for $X$ in some neighborhood of $0$ in $\fg_-$, then we can easily write down the action of automorphisms $s\exp(Z)\in A_u\cap \Sigma_J$. Precisely, the automorphism $s\exp(Z)$ acts as 
$$\pi\circ \Fl_1^{\om^{-1}(X)}(u) \mapsto \pi\circ \Fl_1^{\om^{-1}(X)}(us\exp(Z)).$$
Then $s$ provides the linear change of coordinates of $X$ in $\fg_-$, but the $\exp(\fp_+)$--part changes the normal coordinate system itself. Thus apart the automorphisms of type $\Sigma_J$, there is another distinguished class of automorphisms with natural properties.

\begin{Definition} \label{def3}
Let $(\pi: \ba\to M,\om)$ be a parabolic geometry of type $(G,P)$ and $s\in Z(G_0)$. We say that automorphisms of type $$\Sigma(s):=\{p^{-1}sp: p\in P\}$$ are \emph{s--symmetries}. We say that automorphisms of type $\bigcup_{s\in Z(G_0)} \Sigma(s)$ are \emph{generalized symmetries}.
\end{Definition}

We see immediately that $(\underline{\Ad}|_{\fg^{-1}/\fp})(\Sigma(s))=\{\underline{\Ad}_s\}$ holds, i.e., $s$--symmetries have a natural action on $T^{-1}M$. Thus we will always write the elements of $Z(G_0)$ in the form $s_J$ for $J=\underline{\Ad}_{s_J}\in Z(\underline{\Ad}|_{\fg^{-1}/\fp}(P))$. We will show in Proposition \ref{2.1} that there is bijection between the actions $J$ and elements $s_J$. Our main result of the fourth section states that on homogeneous parabolic geometries, the existence of automorphisms of type $\Sigma_J$ induces the existence of the generalized symmetries with the same action on $T^{-1}M$.

\noindent
{\bf Regularity and normality of parabolic geometries.}
Finally, let us add some further natural assumptions on the parabolic geometries we will investigate.

We will always assume that the parabolic geometry $(\pi: \ba\to M,\om)$ of type $(G,P)$ is regular. In particular, the distribution $T^{-1}M$ is maximally non--integrable with the symbol $\fg_-$ at all points of $M$. This means that $T^{-1}M$ generates the whole filtration $T^iM$ of $TM$, and it is reasonable to consider automorphisms with natural action just on $T^{-1}M$ and not on the whole $TM$. Indeed the action on $T^{-1}M$ of regular parabolic geometry induce the action on associated graded of $TM$.

Further, we will for simplicity assume that the parabolic geometry $(\pi: \ba\to M,\om)$ of type $(G,P)$ is normal, i.e., that $\partial^*\kappa=0$ holds for the curvature, where $\partial^*$ is the Kostant codifferential. This allows to study the harmonic curvature (function) $\kappa_H$,  which has its values in $ \ker(\partial^*)/\Imm(\partial^*)\cong H^2(\fg_-,\fg)$, instead of the whole curvature $\kappa$. In fact, $\kappa_h$ is a basic invariant of regular, normal parabolic geometries, because there is a differential operator $L$ such that $\kappa=L(\kappa_H)$, see \cite{BGG}. 

The $\fg_0$--module $H^2(\fg_-,\fg)$ decomposes into a sum of isotypical components ($\fg_0$--submodules) of $\fg_0$--dominant weights, which have multiplicity $1$. 
There are several possible ways, how to represent these isotypical components. We will use in this article the following ways:
\begin{enumerate}
\item Each component in $H^2(\fg_-,\fg)$ is uniquely given by the ordered pair $(i,j)$ such that the $\fg_0$--dominant weight $\mu$ has the lowest weight vector $X^{-\mu}$ mapping $\fg_{-\alpha_i}\wedge \fg_{s_{\alpha_i}(-\alpha_j)}$ to $\fg_{s_{\alpha_i}s_{\alpha_j}(-\mu^\fg)}$, where we denote by $s_{\alpha_i}$ the reflexion over the simple root $\alpha_i$.
\item Each component in $H^2(\fg_-,\fg)$ is uniquely given by the $\fg_0$--dominant weight $\tilde \mu$ in $H^2(\fp_+,\fg)$ given by the affine action of distinguished elements of the Hasse graph of the parabolic geometry $(G,P)$ on the highest weight $\mu^\fg$.
\end{enumerate}
The relation between the weights $\tilde \mu$ and $\mu$ is induced by the non--trivial isomorphism $H^2(\fp_+,\fg)\cong H^2(\fg_-,\fg)$ and there are algorithms for the computation of both $\mu$ and $\tilde \mu$ for all components of the harmonic curvature, see \cite{Si}.

Since $(\fg_-)^*=\fp_+$, we can equivalently view $X^{-\mu}$ as an element of $\fp_+\wedge \fp_+\otimes \fg$. Precisely, $X^{-\mu}=X^{\alpha_i} \wedge X^{s_{\alpha_i}(\alpha_j)}\otimes X^{s_{\alpha_i}s_{\alpha_j}(-\mu^\fg)}$, where $X^{\alpha_l}$ is a root vector in the root space $\fg_{\alpha_l}$.

\section{Homogeneous parabolic geometries and automorphisms of type $\Sigma$}
\label{s1}

{\bf Construction of homogeneous parabolic geometries.}
There is a general way how to construct all homogeneous parabolic geometries, see \cite{parabook} or \cite{M}. We recall the following definition.

\begin{Definition}
Let $(K,H)$ and $(G,P)$ be types of Cartan geometries. Then the pair of maps $(i,\alpha)$ is called an \emph{extension} of $(K,H)$ to $(G,P)$ if it satisfies the following conditions:

\begin{enumerate}
\item $i:H\to P$ is a Lie group homomorphism,
\item $\alpha: \frak{k}\to \frak{g}$ is a linear map extending $T_ei:\frak{h}\to \frak{p}$,
\item $\alpha$ induces an isomorphism $\frak{k}/\frak{h} \rightarrow \frak{g}/\frak{p}$ of vector spaces, 
\item $\Ad_{i(h)}\circ \alpha=\alpha \circ \Ad_h$ holds for all $h\in H$.
\end{enumerate}
\end{Definition}
Each extension $(i,\alpha)$ determines the \emph{extension functor} $\mathcal{F}_\alpha$ from the category of Cartan geometries of type $(K,H)$ to the category of Cartan geometries of type $(G,P)$.  
We have $\mathcal{F}_\alpha(\mathcal{B})=\mathcal{B}\times_{i(H)} P$ for the Cartan geometry $(\mathcal{B}\to M,\om)$ of type $(K,H)$, $\mathcal{F}_\alpha(\om)$ is the unique Cartan connection, which pulls back to $\om$ via the natural inclusion $\mathcal{B} \to \mathcal{F}_\alpha(\mathcal{B})$ and $\mathcal{F}_\alpha(\phi)$ for an automorphisms $\phi\in \Aut(\mathcal{B},\om)$ is the induced automorphism of the associated bundle $\mathcal{B}\times_{i(H)} P$.

There can be many extensions giving the same parabolic geometry. If we consider the natural inclusions $\Aut(\ba,\om) \to \ba$, then we obtain the inclusions  $A_{u_0}\subset P$ and $\infi(u_0)\subset \fg$   (which depend on the choice of $u_0$) which naturally satisfy all conditions of the extension for each choice of $u_0\in \ba$. We adopt the following terminology:

\begin{Definition}
We say that the homogeneous parabolic geometry $(\ba \rightarrow M,\om)$ of type $(G,P)$ \emph{is given by the extension $(i,\alpha)$ of $(K,H)$ to $(G,P)$ at $u_0\in \ba$}, if there is an isomorphism between parabolic geometries $\mathcal{F}_\alpha(K\to K/H,\om_K)$ and $(\ba \rightarrow M,\om)$ such that the class $\lz e,e\pz\in K\times_{i(H)}P$ corresponds to $u_0$.
\end{Definition}

Let us point out that the kernel of the evaluation map at $u_0$ coincides with the kernel of $i$ and consists of elements, which act as $\id$ on $K/H$. In particular, we will for simplicity assume that the pair $(K,H)$ is effective, $K\subset \Aut(\ba,\om)$ and $i$ injective and we will omit writing it.

\noindent
{\bf Automorphisms of type $\Sigma$ on homogeneous parabolic geometries.}
Let $(\pi:\ba\to M,\om)$ be a homogeneous parabolic geometry of type $(G,P)$. Then we can consider the $P$--invariant subset $\bigcup_{u\in \ba}(u,A_u)$ of the bundle $\ba\times P$ with the right action of $P$ given by $(u,q)p:=(up,p^{-1}qp)$. The homogeneity allows to describe all automorphisms of the given type $\Sigma$ at all points of $M$.

\begin{Proposition} \label{canonical-iso-homog}
Let $(\pi:\ba\to M,\om)$ be a homogeneous parabolic geometry of type $(G,P)$ and let $u_0\in \ba$. 
Then there is one--to--one correspondence between automorphisms of type $\Sigma$ at any fixed point of $M$ of the parabolic geometry $(\pi:\ba\to M,\om)$ and elements of $A_{u_0}\cap \Sigma$. In particular, $$\bigcup_{u\in \Aut(\ba,\om)(u_0)}(u,A_u\cap \Sigma)\cong  \Aut(\ba,\om)(u_0)\times (A_{u_0}\cap \Sigma).$$
\end{Proposition}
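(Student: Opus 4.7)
The plan is to exploit the rigidity of Cartan geometries---the standard fact that an automorphism is determined by its value at a single frame, so that $\Aut(\ba,\om)$ acts freely on $\ba$---together with the homogeneity assumption. These combine to give, for each $u$ in the orbit $\Aut(\ba,\om)(u_0)$, a \emph{unique} $\psi_u\in \Aut(\ba,\om)$ with $\psi_u(u_0)=u$. The map $u\mapsto \psi_u$ is the trivialization of the orbit that will allow me to identify every $A_u$ with $A_{u_0}$ via conjugation.

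Given such a $\psi_u$, I send $\phi\in A_u$, represented by $\phi(u)=u\cdot s$ for some $s\in P$, to the conjugate automorphism $\tilde\phi:=\psi_u^{-1}\circ\phi\circ\psi_u$. Using that every element of $\Aut(\ba,\om)$ is $P$-equivariant, I compute
$$\tilde\phi(u_0)=\psi_u^{-1}(\phi(u))=\psi_u^{-1}(u\cdot s)=\psi_u^{-1}(u)\cdot s=u_0\cdot s,$$
so $\tilde\phi\in A_{u_0}$ is represented by the \emph{same} element $s\in P$. Thus the assignment $(u,\phi)\mapsto(u,\tilde\phi)$ realizes a bijection $\{u\}\times A_u\to \{u\}\times A_{u_0}$ on each slice, with inverse $(u,\tilde\phi)\mapsto(u,\psi_u\tilde\phi\psi_u^{-1})$. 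Taking the union over the orbit yields the set-level isomorphism asserted in the second statement.

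Finally, since the element of $P$ representing $\phi$ on the $A_u$ side and the element representing $\tilde\phi$ on the $A_{u_0}$ side coincide, the bijection trivially respects any conjugation-invariant subset $\Sigma\subset P$, giving $\{u\}\times(A_u\cap\Sigma)\leftrightarrow\{u\}\times(A_{u_0}\cap\Sigma)$. The first assertion then follows: an automorphism of type $\Sigma$ fixing $\pi(u)$ for some $u$ in the orbit is uniquely determined by the element $s\in A_{u_0}\cap\Sigma$ representing its conjugate in $A_{u_0}$. The only genuinely delicate step is the rigidity needed to guarantee uniqueness of $\psi_u$; once this is in place, the remainder of the argument is essentially a tautology made possible by the $P$-equivariance of automorphisms.
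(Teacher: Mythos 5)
Your proposal is correct and follows essentially the same route as the paper: conjugation by the unique automorphism $\psi_u$ carrying $u_0$ to $u$ (uniqueness coming from the evaluation embedding $\Aut(\ba,\om)\hookrightarrow\ba$), the $P$-equivariance computation showing the representing element $s\in P$ is unchanged, and conjugation-stability of $\Sigma$ together with transitivity on $M$ to handle arbitrary fixed points. No substantive difference from the paper's argument.
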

\begin{proof}
Clearly, $\phi^{-1}p \phi\in A_{u_0}$ holds for each $(\phi(u_0),p)\in \bigcup_{u\in \Aut(\ba,\om)(u_0)}(u,A_u)$. Thus, since $\phi$ is invertible and unique for each point of $\Aut(\ba,\om)(u_0)$, the second claim follows.

Since there always is $\phi\in \Aut(\ba,\om)$ such that $\pi(\phi(u_0))=x$ for any point $x\in M$, it follows that $\phi(A_{u_0}\cap \Sigma)\phi^{-1}$ is the set of automorphisms of type $\Sigma$ at $x$, because the type of automorphism $\Sigma$ is stable under conjugation.
\end{proof}

Let $(\pi:\ba\to M,\om)$ be a homogeneous parabolic geometry of type $(G,P)$ given by the extension $(i,\alpha)$ of effective $(K,H)$ to $(G,P)$ at $u_0\in \ba$. There is a natural question, how to find all automorphisms of a chosen type $\Sigma$ of this geometry, i.e., how to describe explicitly the set $A_{u_0}\cap \Sigma$? 

The trouble is that $K$ can be much smaller than the whole $\Aut(\ba,\om)$. For example, we can view the flat model of $(G,P)$ as an extension given by an inclusion of the maximal compact subgroup $K$ into $G$. In particular, $H\cap \Sigma$ can be empty, even if $A_{u_0}\cap \Sigma$ is non--empty.

\begin{Lemma} \label{lemma3.2}
Let $(\pi:\ba\to M,\om)$ be a homogeneous parabolic geometry of type $(G,P)$ given by the extension $(i,\alpha)$ of effective $(K,H)$ to $(G,P)$ at $u_0\in \ba$.
\begin{enumerate}
\item Let $h\in P$ be such that $\Ad_h(\alpha(\fk))\subset \alpha(\fk)$ and $h.\kappa(u_0)=\kappa(u_0)$ (for the action induced by $\Ad$--action). Then there is local automorphism $\phi$ of the parabolic geometry such that $\phi(u_0)=u_0h$. In particular, such $h$ induces an automorphism of $\fk$ and if $M$ is simply connected, then $h\in A_{u_0}.$
\item Denote $$\Sigma^\fk:=\{h\in \Sigma: h.\kappa(u_0)=\kappa(u_0),\Ad_h(\alpha(\fk))\subset \alpha(\fk)\}.$$
Then there exists an effective pair $(K',H')$ and an extension $(\alpha',i')$ of $(K',H')$ to $(G,P)$ such that $i'(H')\cap \Sigma=\Sigma^\fk$ and $\mathcal{F}_{\alpha'}(K'\to K'/H',\om_{K'})$ is simply connected covering of $(\pi:\ba\to M,\om)$. In particular, the parabolic geometries $\mathcal{F}_{\alpha'}(K'\to K'/H',\om_{K'})$ and $(\pi:\ba\to M,\om)$ are locally isomorphic and each automorphism of $\mathcal{F}_{\alpha'}(K'\to K'/H',\om_{K'})$ is a local automorphism of $(\pi:\ba\to M,\om)$.
\end{enumerate}
\end{Lemma}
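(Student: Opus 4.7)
The plan for (1) is to turn the two hypotheses on $h$ into the statement that
\[
\psi:=\alpha^{-1}\circ\Ad_h\circ\alpha
\]
is a Lie algebra automorphism of $\fk$ preserving $\fh$, then integrate $\psi$ to a local Lie group automorphism $\Psi$ of $K$, and finally build $\phi$ on $\ba=K\times_{i(H)}P$ by the formula $\lz k,p\pz\mapsto \lz\Psi(k),h\,p\pz$. That $\psi$ is a well-defined linear isomorphism preserving $\fh$ is immediate: $\Ad_h$ preserves $\fp$ (since $h\in P$) and $\alpha(\fk)$ (by assumption), hence their intersection $\alpha(\fh)$. To see that $\psi$ respects the bracket I invoke the Maurer--Cartan--type identity that characterises any extension,
\[
\alpha([X,Y]_\fk)=[\alpha(X),\alpha(Y)]_\fg-\kappa(u_0)(\bar X,\bar Y),\qquad X,Y\in\fk,
\]
where $\bar X,\bar Y\in\fg/\fp$ denote the classes of $\alpha(X),\alpha(Y)$. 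Applying the $\fg$-Lie-algebra automorphism $\Ad_h$ to both sides, using $\Ad_h\alpha=\alpha\psi$ and the invariance $h\cdot\kappa(u_0)=\kappa(u_0)$, reproduces the identical relation with $\psi(X),\psi(Y)$ in place of $X,Y$, so $\psi([X,Y]_\fk)=[\psi(X),\psi(Y)]_\fk$.

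Integrating $\psi$ yields a local group automorphism $\Psi$ of $K$; the relation $\alpha\psi=\Ad_h\alpha$ together with $T_ei=\alpha|_\fh$ gives the cocycle condition $i\circ\Psi=\conj_h\circ i$ on a neighborhood of $e\in H$, which is precisely what is needed so that $\phi(\lz k,p\pz):=\lz\Psi(k),h\,p\pz$ descends from $K\times P$ to the fibre product near $u_0=\lz e,e\pz$, with $\phi(u_0)=u_0\cdot h$. The identity $\phi^*\om=\om$ is verified on the image of the natural inclusion $K\hookrightarrow K\times_{i(H)}P$, where $\om$ is the left-invariant extension of $\alpha$: equality there is exactly $\Psi_*\alpha=\Ad_h\circ\alpha$, and $P$-equivariance of $\om$ propagates it to a neighborhood of $u_0$. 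Thus $\phi$ is a local automorphism of the parabolic geometry inducing $\psi$ on $\fk$. When $M$ is simply connected, a standard analytic-continuation argument for automorphisms of Cartan geometries extends $\phi$ globally, so $h\in A_{u_0}$.

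For part (2) I pass to the simply connected covering $\tilde M$ of $M$ and the pulled-back parabolic geometry $(\tilde\ba,\tilde\om)$. By (1), every $h\in\Sigma^\fk$ is realised at the chosen lift $\tilde u_0$ of $u_0$ as the evaluation of an element of $\Aut(\tilde\ba,\tilde\om)$. Let $K'$ be the Lie subgroup of $\Aut(\tilde\ba,\tilde\om)$ generated by the lift of $K$ together with these new automorphisms, let $H'$ be the stabiliser of $\tilde u_0$ in $K'$, and take $(\alpha',i')$ to be the canonical extension data at $\tilde u_0$ (namely $\alpha'=\om\circ\mathrm{ev}_{\tilde u_0*}$ and $i'$ the natural inclusion). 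Effectiveness of $(K',H')$ is inherited from that of $(G,P)$ together with the injectivity of the evaluation map, and by construction $\mathcal F_{\alpha'}(K'\to K'/H',\om_{K'})\cong(\tilde\ba,\tilde\om)$ is a simply connected covering of $(\ba,\om)$. The equality $i'(H')\cap\Sigma=\Sigma^\fk$ is then exactly what the simply connected case of (1) yields. The main obstacle I anticipate is the verification of $\phi^*\om=\om$ and of the well-definedness of $\phi$ on the fibre product in (1): both rest on the same cocycle $i\circ\Psi=\conj_h\circ i$, but they must be unwound carefully at the level of the explicit formula for $\om$ on $K\times_{i(H)}P$, with the signs and normalisations in the Maurer--Cartan identity tracked consistently throughout.
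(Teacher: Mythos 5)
Your reduction of the hypotheses to the statement that $\psi:=\alpha^{-1}\circ\Ad_h\circ\alpha$ is a Lie algebra automorphism of $\fk$ preserving $\fh$ is correct and is essentially the computation the paper performs with the curvature formula for homogeneous geometries. The problems start exactly at the point you flag and do not get resolved. First, the formula $\phi(\lz k,p\pz):=\lz\Psi(k),hp\pz$ is not well defined on $K\times_{i(H)}P$ as you set it up: the equivalence relation in the associated bundle identifies $\lz kh',p\pz$ with $\lz k,i(h')p\pz$ for \emph{arbitrary} $h'\in H$, whereas your $\Psi$ is only a local automorphism near $e\in K$ and your cocycle $i\circ\Psi=\conj_h\circ i$ is only obtained near $e\in H$ by exponentiating the Lie algebra identity $T_ei\circ\psi|_{\fh}=\Ad_h\circ T_ei$. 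Even if $\Psi$ were global, $H$ need not be connected, so the Lie algebra identity alone does not give the cocycle on all of $H$. This is precisely why the paper does not work on $K\times_{i(H)}P$ at all: it passes to the simply connected group $K^c$ with Lie algebra $\fk$ (where $\psi$ integrates to a global automorphism), to the covering $K^c/H^c$ of $K/H$ with $H^c$ \emph{connected} (so the cocycle does integrate), and then realizes $h$ inside the new extension $(\alpha,\bar i)$ of $(\bar K,\bar H)=(K^c\rtimes\{h^k\},\,H^c\rtimes\{h^k\})$, obtaining a genuine automorphism of the covering geometry and hence a local automorphism of the original one.

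Second, your globalization for simply connected $M$ by a ``standard analytic-continuation argument'' is a genuine gap: a germ of an automorphism of a smooth (or even analytic) Cartan geometry need not extend globally merely because $M$ is simply connected, and $h$ is a discrete automorphism, not the flow of an infinitesimal automorphism, so the Amores/Nomizu-type continuation results for Killing fields do not apply. The paper gets $h\in A_{u_0}$ without any continuation: when $M$ is simply connected one has $M=K^c/H^c$ with $H^c$ connected, the extension $(\alpha,\bar i)$ of $(\bar K,\bar H)$ reproduces the geometry $(\pi:\ba\to M,\om)$ itself, and $h\in\bar H$ then acts as a global automorphism with $\phi(u_0)=u_0h$. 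Since your part (2) invokes exactly this globalized case of (1) on the universal cover to produce the elements of $\Aut(\tilde\ba,\tilde\om)$ generating $K'$, it inherits the same gap; moreover the equality $i'(H')\cap\Sigma=\Sigma^\fk$ is asserted but the inclusion $i'(H')\cap\Sigma\subseteq\Sigma^\fk$ still needs the observation that the new generators normalize $\fk$, so that the Lie algebra of $K'$ is again $\alpha(\fk)$ and any element of $\Sigma$ stabilizing $\tilde u_0$ automatically preserves $\kappa(u_0)$ and $\alpha(\fk)$. The paper avoids these issues by constructing $(K',H')$ explicitly as the effective quotient of $(K^c\rtimes H',H^c\rtimes H')$, with $H'\subset P$ generated by $i(H)$ and $\Sigma^\fk$, and $\alpha'=\alpha+\id_{\fh'}$.
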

\begin{proof}
If $h\in A_{u_0}$, then $\kappa(u_0)=\kappa(h(u_0))=\kappa(u_0h)=h.\kappa(u_0)$. Conversely, the following equality holds for the curvature: $$\kappa(u_0)(X,Y)=h.\kappa(u_0)(X,Y)=\Ad_h^{-1}([\alpha(X_h),\alpha(Y_h)]-\alpha([X_h,Y_h])),$$ where $X_h, Y_h$ are arbitrary elements of $\frak{inf}(u_0)$ congruent with $\Ad_h(X), \Ad_h(Y)$ modulo $\fp$, see \cite[Section 1.5.16]{parabook} for description of the curvature of homogeneous geometries. Since $\Ad_h(\alpha(\fk))\subset \alpha(\fk)$ and $\alpha$ is injective due to effectivity, we can take $X_h$ to be the preimage of $\Ad_h(\alpha(X))$ in $\fk$ with respect to $\alpha$ for each $X \in \fk$. Thus $\Ad_h([X,Y]-\alpha([X,Y]))=[\alpha(X_h),\alpha(Y_h)]-\alpha([X_h,Y_h])$. Comparing of the terms in the expression gives $[X,Y]_h=[X_h,Y_h]$ due to injectivity of $\alpha$, i.e. $h$ induces an automorphism of $\fk$. Let $K^c$ be connected, simply connected Lie group with the Lie algebra $\fk$. Then we can form the semidirect product $\bar K=K^c\rtimes \{h^k : k\in \mathbb{Z}\}$.

If $M$ is simply connected and $M=K^c/H^c$, then $H^c$ is connected and we can form the semidirect product $\bar H=H^c\rtimes \{h^k : k\in \mathbb{Z}\}$. Then if we define $\bar i(h',h^k):=i(\pi_H(h'))h^k$ for $\pi_H$ the covering homomorphism $H^c\to H$, then  $(\alpha,\bar i)$ is the extension of $(\bar K, \bar H)$ to $(G,P)$ giving the geometry $(\pi:\ba\to M,\om)$ at $u_0$. Thus if $M$ is simply connected, then $h\in A_{u_0}$.

If $M$ is not simply connected, then we consider the simply connected covering $K^c/H^c$ of $K/H$. Then $(\alpha,\bar i)$ is the extension of $(\bar K, \bar H)$ to $(G,P)$ giving the parabolic geometry $\mathcal{F}_{\alpha}(\bar K\to K^c/H^c,\om_{\bar K})$ on the simply connected covering. Clearly, $\mathcal{F}_{\alpha}(\bar K\to K^c/H^c,\om_{\bar K})$ is locally isomorphic to $(\pi:\ba\to M,\om)$ and thus the claim (1) follows.

The same construction can be done for the whole $\Sigma^\fk$ instead of $\{h^k : k\in \mathbb{Z}\}$, which implies the claim (2) directly. Precisely, we define $(K',H')$ to be the effective quotient of $(K^c\rtimes H',H^c\rtimes H')$, where $H'$ is the subgroup of $P$ generated by $i(H)$ and $\Sigma^\fk$, and we define $\alpha'=\alpha+\id_{\fh'}$ and $i'((h,h'))=i(\pi_H(h))h'$, which define the extension $(\alpha',i')$ of $(K',H')$ to $(G,P)$.
\end{proof}

In particular, we get the following consequence of the previous Lemma.

\begin{Corollary} \label{cor1.4}
For $\theta\in \Aut(\fg)$ such that $\theta(\fp)\subset \fp$, there is a pair $(G,P)$ satisfying all our assumptions such that $\theta=\Ad_{h}$ for $h\in P$.
\end{Corollary}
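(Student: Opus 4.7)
The plan is to realize $\theta$ as an inner automorphism by enlarging a standard base pair via a semidirect product with the discrete cyclic subgroup of $\Aut(\fg)$ generated by $\theta$. I would start with $G_0 := \mathrm{Inn}(\fg)$, the adjoint group of $\fg$, and its standard parabolic $P_0$ corresponding to $\Xi$. This base pair satisfies all our standing assumptions: $G_0$ and $P_0$ are both connected, $P_0$ coincides with the subgroup of filtration-preservers in $G_0$, and $(G_0, P_0)$ is effective since any normal subgroup of the adjoint group is a product of simple factors, none of which fits inside the proper parabolic $P_0$.

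Next I would observe that $\theta$ preserves the entire $P$-invariant filtration $\fg^\bullet$, not just $\fp = \fg^0$. This is automatic: the nilradical $\fp_+$ of $\fp$, its iterated commutators, and the Killing-orthogonals determine the whole filtration intrinsically from $\fp$, so any Lie algebra automorphism stabilizing $\fp$ respects the full filtration. Since $G_0$ is centerless, $\theta$ integrates uniquely to a Lie group automorphism of $G_0$, which therefore preserves the connected subgroup $P_0$.

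I would then set $G := G_0 \rtimes \langle\theta\rangle$ and $P := P_0 \rtimes \langle\theta\rangle$, where $\langle\theta\rangle \subset \Aut(\fg)$ is the cyclic subgroup generated by $\theta$ in its natural order (finite or $\mathbb{Z}$). The element $h := (e,\theta) \in P$ satisfies $\Ad_h = \theta$ by the standard computation $h \exp(tX) h^{-1} = \exp(t\,\theta(X))$. The Lie algebras of $(G, P)$ are visibly $(\fg, \fp)$; an element $(g, \theta^k) \in G$ preserves the filtration via $\Ad_g \circ \theta^k$ iff $g \in P_0$, so $P$ equals the subgroup of filtration-preservers in $G$ and contains its identity component $P_0$.

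The main obstacle I anticipate is verifying effectivity of $(G, P)$. The argument runs as follows: for any normal subgroup $N \trianglelefteq G$ contained in $P$, the intersection $N \cap G_0$ is $G_0$-normal and lies in $P_0$, hence is trivial by effectivity of $(G_0, P_0)$; thus $N$ projects injectively into $\langle\theta\rangle$, and normality under conjugation by elements of $G_0$ forces any $(e, \theta^k) \in N$ to satisfy $g\,\theta^k(g^{-1}) = e$ for all $g \in G_0$, i.e. $\theta^k = \id$ in $\Aut(\fg)$, whence $k = 0$ and $N = \{e\}$. The delicate point is that $\langle\theta\rangle$ must be taken as the honest cyclic subgroup of $\Aut(\fg)$ with its natural order, since a free $\mathbb{Z}$-extension would introduce spurious powers $\theta^n$ acting as the identity on $\fg$, forming a nontrivial normal subgroup inside $P$ and destroying effectivity.
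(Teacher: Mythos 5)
Your construction and the paper's (which is extracted from the proof of Lemma \ref{lemma3.2}: adjoin the automorphism to the group by a semidirect product with $\{h^k:k\in\mathbb{Z}\}$) are the same in spirit, but there is a genuine gap in your verification of effectivity, and in fact the pair $(G,P)=(G_0\rtimes\langle\theta\rangle,\,P_0\rtimes\langle\theta\rangle)$ you build is \emph{not} effective for many $\theta$. The logical slip is in the last step: from ``$N$ projects injectively into $\langle\theta\rangle$'' you pass to elements ``$(e,\theta^k)\in N$'', but elements of $N$ have the form $(p,\theta^k)$ with $p\in P_0$ possibly nontrivial, and conjugation by $(g,e)$ only yields $g\,p\,\theta^k(g^{-1})\in P_0$ for all $g$, not $g\theta^k(g^{-1})=e$. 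Concretely, suppose $\theta$ is a nontrivial inner automorphism; since parabolics are self--normalizing, $\theta=\Ad_q$ with $q\in P_0$. Then $n:=(q^{-1},\theta)$ satisfies $(g,e)\,n\,(g,e)^{-1}=(g q^{-1}\theta(g^{-1}),\theta)=(q^{-1},\theta)=n$ and $(e,\theta)\,n\,(e,\theta)^{-1}=(\theta(q^{-1}),\theta)=n$, so $n$ is a nontrivial central element of $G$ lying in $P$, and $\langle n\rangle$ is a nontrivial normal subgroup of $G$ contained in $P$: effectivity fails. The same happens whenever some nonzero power $\theta^k$ is inner (e.g.\ $\theta$ an outer involution composed with a generic $\Ad_q$, $q\in P_0$), so this is not a fringe case, and of course these are exactly the $\theta$ for which your $G_0\rtimes\langle\theta\rangle$ contains redundant copies of conjugations already present in $G_0$.

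The missing idea --- and the way the paper's argument proceeds via Lemma \ref{lemma3.2} --- is to pass to the \emph{effective quotient}: divide $G_0\rtimes\langle\theta\rangle$ by the maximal normal subgroup $N$ contained in $P_0\rtimes\langle\theta\rangle$. Since $\fp$ contains no ideal of $\fg$, the Lie algebra of $N$ is trivial, so $N$ is discrete, the quotient still has Lie algebra $\fg$, its identity component is still $G_0$, the image of $P_0\rtimes\langle\theta\rangle$ is still the filtration--preserving subgroup, and the class of $h=(e,\theta)$ still satisfies $\Ad_{[h]}=\theta$ because the projection has discrete kernel and hence identity differential. With this additional step (or, alternatively, by first splitting off and discarding the part of $\langle\theta\rangle$ that is realized inside $P_0$) your argument becomes correct; as written, the direct effectivity claim is false and the proof does not go through.
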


The distinguished case $\fk=\frak{inf}(u_0)$ allows to describe the set $A_{u_0}\cap \Sigma$ explicitly.

\begin{Theorem}\label{lemma3.10}
Let $(\pi:\ba\to M,\om)$ be a homogeneous parabolic geometry of type $(G,P)$. Then $$\sinf=\{h\in \Sigma: h.\kappa(u_0)=\kappa(u_0), \Ad_h(\frak{inf}(u_0))\subset \frak{inf}(u_0)\}$$ consists of local automorphisms of type $\Sigma$ and $A_{u_0}\cap \Sigma\subset \sinf$. In particular, if $M$ is simply connected, then $\sinf=A_{u_0}\cap \Sigma.$
\end{Theorem}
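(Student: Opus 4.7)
The plan is to reduce the statement to a direct application of Lemma \ref{lemma3.2} by choosing the auxiliary pair $(K,H)$ in an optimal way. Concretely, I would take $K$ to be the connected component of the identity of $\Aut(\ba,\om)$, which acts transitively on $M$ since the geometry is homogeneous, and $H$ the stabilizer of $\pi(u_0)$. Under the trivialization provided by $u_0$, the Lie algebra of $K$ is identified with $\frak{inf}(u_0)\subset \fg$ and the Lie algebra of $H$ with $\frak{inf}(u_0)\cap \fp$. Taking $\alpha$ to be the inclusion $\frak{inf}(u_0)\hookrightarrow \fg$ and $i:H\hookrightarrow P$ the inclusion (which is legitimate since we assume $(K,H)$ effective, which is automatic here because no non--trivial element of $K$ acts as the identity on $M$), one checks that all four axioms of an extension hold, and that the associated extension functor recovers $(\ba\to M,\om)$ at $u_0$.

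With this setup, $\alpha(\fk)=\frak{inf}(u_0)$ and the hypothesis $\Ad_h(\alpha(\fk))\subset \alpha(\fk)$ of Lemma \ref{lemma3.2} becomes precisely $\Ad_h(\frak{inf}(u_0))\subset \frak{inf}(u_0)$. Combined with the curvature invariance $h.\kappa(u_0)=\kappa(u_0)$, Lemma \ref{lemma3.2}(1) then produces a local automorphism $\phi$ of $(\ba\to M,\om)$ with $\phi(u_0)=u_0h$, establishing that every $h\in \sinf$ is a local automorphism of type $\Sigma$, and that $h\in A_{u_0}$ whenever $M$ is simply connected.

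For the reverse inclusion $A_{u_0}\cap \Sigma\subset \sinf$, I would argue directly: any $h\in A_{u_0}$ is realized by a genuine automorphism $\phi\in \Aut(\ba,\om)$ with $\phi(u_0)=u_0h$. Automorphisms are equivariant for the principal right action and preserve $\om$, hence preserve the curvature function $\kappa$, which under the translation to $u_0$ yields $h.\kappa(u_0)=\kappa(u_0)$. Moreover, conjugation by $\phi$ sends infinitesimal automorphisms to infinitesimal automorphisms; reading this identity at $u_0$ via the trivialization gives $\Ad_h(\frak{inf}(u_0))\subset \frak{inf}(u_0)$. Since $h\in \Sigma$ by assumption, both defining conditions of $\sinf$ hold. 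When $M$ is simply connected, combining the two inclusions yields the claimed equality $\sinf=A_{u_0}\cap \Sigma$.

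I expect the only mildly subtle point to be verifying that $K:=\Aut(\ba,\om)_0$ (or a suitable finite--dimensional transitive Lie subgroup thereof) really fits into the framework of an effective extension giving back the original geometry at $u_0$, together with the compatibility of the $\Ad$--conjugation identity $\Ad_h(\frak{inf}(u_0))\subset \frak{inf}(u_0)$ with what conjugation by $\phi$ does to fundamental vector fields of infinitesimal automorphisms; both are standard consequences of the discussion in the preceding paragraphs on the inclusion $\Aut(\ba,\om)\to \ba$ given by evaluation at $u_0$, but they are the only places where one must be careful. Everything else is then an immediate translation through Lemma \ref{lemma3.2}.
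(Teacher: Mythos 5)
Your proposal is correct and follows essentially the same route as the paper: the theorem is precisely the special case $\fk=\infi(u_0)$ of Lemma \ref{lemma3.2}, applied to the extension obtained from the evaluation inclusions $A_{u_0}\subset P$, $\infi(u_0)\subset\fg$ at $u_0$, which is exactly your setup. Your explicit check of the reverse inclusion $A_{u_0}\cap\Sigma\subset\sinf$ (curvature invariance and $\Ad_h$--stability of $\infi(u_0)$ coming from conjugation by the automorphism realizing $h$) is just the routine part the paper leaves implicit, so no gap remains.
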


There is an algorithm for computation of the local infinitesimal automorphisms of homogeneous parabolic geometries, see \cite{M}. Then $\frak{inf}(u_0)$ consists of complete local infinitesimal automorphisms, and the above Theorem shows how to compute automorphisms of type $\Sigma$, which are contained in $\sinf$.

We will see in the next chapters, that the set $\sinf$ can be computed more easily in the case of $\Sigma_J$.

\section{Homogeneous parabolic geometries and automorphisms of type $\Sigma_J$}

{\bf Description of $\Sigma_J$ and structure of $Z(G_0)$.}
We focus here on types $\Sigma_J$ of automorphisms with a natural action on $T^{-1}M$. In particular, we describe the set $\Sigma_J$ for $J\in Z(\underline{\Ad}|_{\fg^{-1}/\fp}(P))$ in detail.

\begin{Proposition} \label{2.1}
For the pair $(G,P)$, there is a bijection between actions $J\in Z(\underline{\Ad}|_{\fg^{-1}/\fp}(P))$ and  elements $s_J\in Z(G_0)$. In particular, for each $J\in Z(\underline{\Ad}|_{\fg^{-1}/\fp}(P))$, there is a unique $s_J\in Z(G_0)$ such that $$\Sigma_J=\{s_J\exp(Z): Z\in \fp_+\}$$
and $J=\Ad_{s_J}|_{\fg_{-1}}$.
\end{Proposition}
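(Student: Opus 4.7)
The plan is to reduce the proposition to the injectivity of the map $\Phi\colon Z(G_0)\to Gl(\fg_{-1})$, $s\mapsto \Ad_s|_{\fg_{-1}}$, and then prove that injectivity by propagating the triviality of $\Ad_s$ from $\fg_{-1}$ across the whole of $\fg$ before invoking effectivity.

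First I would use the unique factorisation $p=s\exp(Z)$ with $s\in G_0$ and $Z\in \fp_+$, and observe that $[\fp_+,\fg_{-1}]\subset \fp$ forces $\underline{\Ad}_p|_{\fg^{-1}/\fp}=\Ad_s|_{\fg_{-1}}$ under the identification $\fg^{-1}/\fp\cong \fg_{-1}$. Combined with the description of the preimage of $Z(\underline{\Ad}|_{\fg^{-1}/\fp}(P))$ in $P$ as $Z(G_0)\rtimes \exp(\fp_+)$ recorded before Definition~\ref{def2}, this immediately gives, for any $J$ in the center, the existence of at least one $s\in Z(G_0)$ with $\Ad_s|_{\fg_{-1}}=J$. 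The substance of the proposition is then the uniqueness of such an $s$, which is exactly injectivity of $\Phi$.

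To establish injectivity, I would take $s\in Z(G_0)$ with $\Ad_s|_{\fg_{-1}}=\id$ and propagate triviality in three steps: (i) by regularity, $\fg_-$ is generated as a Lie algebra by $\fg_{-1}$, so iterated brackets yield $\Ad_s|_{\fg_-}=\id$; (ii) centrality of $s$ in $G_0$ means $sgs^{-1}=g$ for every $g\in G_0$, hence $\Ad_s|_{\fg_0}=\id$ directly; (iii) the $\Ad$-invariant Killing form of $\fg$ restricts to a non-degenerate pairing $\fg_{-i}\times \fg_i\to \R$, so triviality of $\Ad_s$ on $\fg_{-i}$ transfers to triviality on $\fg_i$, yielding $\Ad_s|_{\fp_+}=\id$. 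Combining the three steps gives $\Ad_s=\id$ on all of $\fg$, i.e. $s\in \ker\Ad=Z(G)$. Since every element of $Z(G)$ acts trivially on $\fg$ and a fortiori preserves the grading, $Z(G)\subset G_0\subset P$; it is normal in $G$, so effectivity of $(G,P)$ forces $Z(G)=\{e\}$ and therefore $s=e$.

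Once uniqueness of $s_J$ is in place, the description $\Sigma_J=\{s_J\exp(Z) : Z\in \fp_+\}$ follows at once from Definition~\ref{def2}, since any element of $\Sigma_J$ lies in $Z(G_0)\rtimes \exp(\fp_+)$ and its $G_0$-component must coincide with the unique $s_J$. The main point to watch is the chain of propagation steps in the injectivity argument: each of the four standing hypotheses (regularity, centrality of $s$ in $G_0$, $\Ad$-invariance of the Killing form, and effectivity of $(G,P)$) is invoked exactly once, and none of them can be dropped without breaking the conclusion; verifying that the assumptions of the paper really cover each link is the only delicate point.
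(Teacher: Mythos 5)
Your proposal is correct and follows essentially the same route as the paper: reduce everything to the uniqueness of $s_J$, propagate triviality of $\Ad_s$ from $\fg_{-1}$ to $\fg_-$ (generation by $\fg_{-1}$), then to $\fp_+$ and $\fg_0$ (the paper uses the duality $\fg_-^*=\fp_+$ and the faithful embedding $\fg_0\subseteq\fg_-^*\otimes\fg_-$, which amounts to your Killing-form and centrality steps), and finish by effectivity of $(G,P)$. One small correction: the fact that $\fg_-$ is generated by $\fg_{-1}$ is a purely algebraic property of the $|k|$-grading (valid because no simple ideal of $\fg$ lies in $\fg_0$, which follows from effectivity), not a consequence of regularity of the geometry, which is a curvature condition and plays no role here.
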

\begin{proof}
The action of $\underline{\Ad}_s\in \underline{\Ad}|_{\fg^{-1}/\fp}(\Sigma_J)$ on $\fg^{-1}/\fp$ coincides with the action of $\Ad_s$ on $\fg_{-1}$. Let $s'_J \in Z(G_0)$ be another element such that $\Ad_{s'_J}=\Ad_{s_J}$ on $\fg_{-1}$. Then $\Ad_{s_J(s'_J)^{-1}}= \id$ holds on $\fg_-$, because $\fg_{-}$ is generated by $\fg_{-1}$. Since the geometry is effective, there is no ideal of $\fg$ contained in $\fg_0$. This means that $\Ad_{s_J(s'_J)^{-1}}= \id$ holds on the whole $\fg$ thanks to the duality $\fg_{-}^* = \fp_+$ and the fact $\fg_0 \subseteq  \fg_{-}^* \otimes \fg_{-}$. Thus the effectivity of $(G,P)$ implies $s_J=s'_J$ and the claim follows.
\end{proof}

Let us describe the elements of $Z(G_0)$ in the form of a functional on the space of weights.

\begin{Proposition}\label{lambda-action}
Let $\fg$ be a semisimple Lie algebra and $\fp$ its parabolic subalgebra, which does not contain any simple ideal of $\fg$. Then the following facts hold:

\begin{enumerate}
\item For each pair $(G,P)$ and each $s_J\in Z(G_0)$, 
there is a complex valued linear functional $\lambda_J$ given on the space of weights of $\fg$ given by 
$$\Ad_{s_J}(X^\alpha)=e^{\lambda_J(\alpha)}X^\alpha$$ for the restricted root $\alpha$.
In particular, the following conditions are satisfied:
\begin{enumerate}
\item If $X^\alpha\in \fg_0$, then $\lambda_J(\alpha)=0$.
\item If $\fg_\alpha$ is a real root space, then $\lambda_J(\alpha)  \in \mathbb{R}\mod \pi i$.
\item If $\fg_\alpha$ is a complex root space, then $\lambda_J(\alpha)$ and $\lambda_J(\sigma^* \alpha)$ are complex conjugate.
\end{enumerate}

\item Let $\lambda_J$ be a complex valued linear functional on the space of weights of $\fg$ satisfying (a)-(c). Then there is a pair $(G,P)$ satisfying all our assumptions with $s_J\in Z(G_0)$ satisfying 
$$\Ad_{s_J}(X^\alpha)=e^{\lambda_J(\alpha)}X^\alpha$$ 
for all restricted roots $\alpha$.

\item In particular, the unique eigenvalue of $s_J\in Z(G_0)$ on the component of the harmonic curvature $(i,j)$ is $e^{\lambda_J(\mu)}$ for the corresponding weight $\mu$, and if $X^{-\mu}=X^{\alpha_i} \wedge X^{s_{\alpha_i}(\alpha_j)}\otimes X^{s_{\alpha_i}s_{\alpha_j}(-\mu^\fg)}$, then $$\Ad_{s_J}(X^{-\mu})=e^{\lambda_J(\alpha_i)+\lambda_J(s_{\alpha_i}(\alpha_j))-\lambda_J(s_{\alpha_i}s_{\alpha_j}(\mu^\fg))}X^{-\mu}.$$
\end{enumerate}
\end{Proposition}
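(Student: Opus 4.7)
The plan is to treat the three parts separately, reducing everything to linear algebra on a Cartan subalgebra. For Part (1), the key observation is that $Z(G_0)$ is contained in every maximal torus of $G_0$, whose Lie algebra is a Cartan subalgebra $\fh \subset \fg_0 \subset \fg$. I would lift $s_J$ to the form $\exp(H)$ for some $H \in \fh_{\mathbb{C}}$ (working in the complexification, since a real logarithm need not exist), so that the action of $\Ad_{s_J}$ on each restricted root space $\fg_\alpha$ is by the scalar $e^{\alpha(H)}$. Setting $\lambda_J(\alpha) := \alpha(H)$ yields a $\mathbb{C}$-linear functional on the weight space. Since $s_J$ centralizes $G_0$, $\Ad_{s_J}$ is trivial on $\fg_0$, which after normalizing the lift gives exactly condition (a); condition (b) follows because a real restricted root space admits a real vector on which $\Ad_{s_J}$ must act by a real scalar; and (c) follows from the fact that the real structure $\sigma$ of $\fg_{\mathbb{C}}$ commutes with $\Ad_{s_J}$ and exchanges $\fg_\alpha$ with $\fg_{\sigma^* \alpha}$, forcing $\lambda_J(\sigma^* \alpha) = \overline{\lambda_J(\alpha)}$.

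For Part (2) I would reverse the construction: given $\lambda_J$ satisfying (a)--(c), define $H \in \fh_{\mathbb{C}}$ by $\alpha_i(H) = \lambda_J(\alpha_i)$ on the simple restricted roots and consider $\theta := \Ad_{\exp(H)} \in \Aut(\fg_{\mathbb{C}})$. Conditions (b) and (c) are precisely what is needed for $\theta$ to commute with the real structure of $\fg$, hence to restrict to a real automorphism, while condition (a) together with the diagonal action on each $\fg_\alpha$ ensures that $\theta$ preserves the grading, so in particular $\theta(\fp) \subset \fp$. Corollary \ref{cor1.4} then produces a pair $(G,P)$ satisfying all standing assumptions, together with $h \in P$ realizing $\theta = \Ad_h$; since $\theta$ is trivial on $\fg_0$, the $G_0$-component of $h$ lies in $Z(G_0)$ and gives the desired $s_J$.

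Part (3) is a direct computation using the $\mathbb{C}$-linearity of $\lambda_J$ established in Part (1): the tensor $X^{\alpha_i} \wedge X^{s_{\alpha_i}(\alpha_j)} \otimes X^{s_{\alpha_i}s_{\alpha_j}(-\mu^\fg)}$ is acted upon diagonally with eigenvalue the product of the three factor eigenvalues, which immediately yields the displayed exponent. Since $s_J$ is central and the isotypical component is $\fg_0$-irreducible, Schur's lemma identifies this scalar with the eigenvalue on the highest weight vector, namely $e^{\lambda_J(\mu)}$; equivalently, the weight of $X^{-\mu}$ differs from $\mu$ only by a sum of $\fg_0$-roots, on which $\lambda_J$ vanishes by condition (a).

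The main technical obstacle will be Part (2): one must check that $\exp(H)$ initially produced in a complex group descends to a real element of an appropriately chosen $(G,P)$, and that the element produced by Corollary \ref{cor1.4} can actually be arranged to lie in $Z(G_0)$ rather than merely in the larger filtration-preserving subgroup. Controlling the isogeny class, effectivity, and the finite component group of $Z(G_0)$ is precisely the flexibility that Corollary \ref{cor1.4} is designed to provide, so the remaining work is mainly bookkeeping rather than substantive.
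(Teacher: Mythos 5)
Your proposal is correct and, for parts (2) and (3), follows the paper's own route: the functional satisfying (a)--(c) is turned into a real, grading-preserving automorphism of $\fg$ and fed into Corollary \ref{cor1.4}, and part (3) is the same direct weight computation. The only genuine difference is in part (1): the paper obtains the scalar action from complete reducibility of the adjoint representation restricted to $G_0$ (Schur's lemma applied to the central element $s_J$), whereas you produce an explicit $H\in\fh_{\mathbb{C}}$ with $\Ad_{s_J}=\exp(\ad H)$ and set $\lambda_J(\alpha)=\alpha(H)$. That works, but your justification needs a repair: for the possibly disconnected group $G_0$ the claim that $Z(G_0)$ lies in every maximal torus can fail, and $s_J$ itself need not be an exponential. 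What you actually need --- and what does hold --- is that $s_J\in Z(G_0)$ forces $\Ad_{s_J}|_{\fg_0}=\id$, so the complexified automorphism fixes a Cartan subalgebra $\fh_{\mathbb{C}}\subset(\fg_0)_{\mathbb{C}}$ pointwise, hence preserves each root space and acts there by a character of the root lattice; since $\mathbb{C}^*$ is divisible, every such character has the form $\beta\mapsto e^{\beta(H)}$, which yields your $H$ and makes (a)--(c) immediate, exactly as in your sketch. Two smaller points: in (2) the element $h$ produced via Corollary \ref{cor1.4} has diagonal adjoint action on root spaces, so it preserves the grading and can be taken in $G_0$ itself (there is no need to pass to its ``$G_0$-component''), centrality then following from $\Ad_h|_{\fg_0}=\id$ as in the paper; and in (3) your observation that the actual weight of the lowest weight vector $X^{-\mu}$ differs from $\mu$ only by $\fg_0$-roots, on which $\lambda_J$ vanishes, is precisely the ``direct consequence'' the paper has in mind.
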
 
\begin{proof}
We get the claim (1) just by the restriction of the adjoint representation of $G$ to $G_0$ and by the complete reducibility. Then the properties (a)--(c) trivially follows.
Conversely, if the conditions (a)--(c) are satisfied, then the formula in the claim (2) induces an automorphism of $\fg$, and the claim (2) follows from the Corollary \ref{cor1.4}, because $\Ad_{s_J}|_{\fg_0}=\id$ implies $s_J\in Z(G_0)$. The claim (3) is then a direct consequence of the description of components of the harmonic curvature.
\end{proof}

Clearly, the set $\Sigma_J$ is a union of $P$--conjugacy classes of suitable elements of $P$. We show that there are distinguished representatives of the $P$--conjugacy classes.

\begin{Lemma}\label{uscor}
There is a finite amount of $Z_i\in \fp_{fix}^{s_J}$ such that $$\Sigma_J=\Sigma(s_J)\cup \bigcup_i \Sigma(s_J\exp(Z_i)),$$ where $$\fp_{fix}^{s_J}:=\{Z\in \fp_+: \Ad_{s_J}(Z)=Z\}.$$  In particular, for $s_J\exp(Z)\in \Sigma_J$ there is $Y\in \fp_+$ such that $\exp(-Y)s_J\exp(Z)\exp(Y)\in s_J\exp(\fp_{fix}^{s_J})$ and $$\Sigma(s_J)=\{s_J\exp(C(-\Ad_{s_J}^{-1}(Y),Y)) : Y\in \fp_+\},$$
where $C$ denotes the application of the Baker--Campbell--Hausdorff (BCH) formula on the elements, see \cite[Theorem 4.29.]{KMS}.
\end{Lemma}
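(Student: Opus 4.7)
The plan is threefold: (i) derive the formula for $\Sigma(s_J)$ from a BCH identity, (ii) prove the ``\emph{in particular}'' conjugation claim by induction on the grading, and (iii) deduce finiteness of the $P$-conjugacy classes using Jordan decomposition.

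For (i), I would start with $s_J^{-1}\exp(-Y)s_J = \exp(-\Ad_{s_J}^{-1}Y)$, which combined with BCH gives $\exp(-Y)\,s_J\,\exp(Y) = s_J\exp(-\Ad_{s_J}^{-1}Y)\exp(Y) = s_J\exp(C(-\Ad_{s_J}^{-1}Y, Y))$. Since $s_J \in Z(G_0)$, conjugation of $s_J$ by a general element $g_0\exp(Y)\in P = G_0\rtimes\exp(\fp_+)$ reduces to conjugation by $\exp(Y)$ only, so the stated formula $\Sigma(s_J) = \{s_J\exp(C(-\Ad_{s_J}^{-1}Y, Y)) : Y \in \fp_+\}$ follows directly.

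For (ii), I would induct on the grading $\fp_+ = \fg_1 \oplus \cdots \oplus \fg_k$. By Proposition~\ref{lambda-action}, $\Ad_{s_J}$ is diagonalisable on each $\fg_j$ with eigenvalues $e^{\lambda_J(\alpha)}$, yielding the splitting $\fg_j = (\fg_j \cap \fp_{fix}^{s_J}) \oplus (\id - \Ad_{s_J}^{-1})(\fg_j)$. Given $s_J\exp(Z)\in\Sigma_J$ with $Z = Z_1+\cdots+Z_k$, and assuming inductively that the grades $<j$ of $Z$ already lie in $\fp_{fix}^{s_J}$, I pick $Y_j \in \fg_j$ such that $Z_j+(\id-\Ad_{s_J}^{-1})Y_j \in \fg_j \cap \fp_{fix}^{s_J}$ and conjugate by $\exp(Y_j)$, producing $s_J\exp(C(-\Ad_{s_J}^{-1}Y_j, Z, Y_j))$. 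Since $[\fg_j,\fg_l]\subseteq\fg_{j+l}$, every bracket in the BCH expansion involving $Y_j$ or $\Ad_{s_J}^{-1}Y_j$ lives in grade $\geq j+1$, so the components of grade $<j$ are untouched while the grade-$j$ part becomes $Z_j + (\id - \Ad_{s_J}^{-1})Y_j \in \fp_{fix}^{s_J}$. After $k$ iterations $Z$ has been conjugated into $\fp_{fix}^{s_J}$, and the total conjugating element is $\exp(Y)$ with $Y$ the BCH-composition of the $Y_j$.

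For (iii), when $Z\in\fp_{fix}^{s_J}$ the elements $s_J$ and $\exp(Z)$ commute and form the semisimple and unipotent parts of the Jordan decomposition of $s_J\exp(Z)$. Because conjugation preserves Jordan decomposition, any $P$-conjugacy between two elements of $s_J\exp(\fp_{fix}^{s_J})$ must be realised by an element centralising $s_J$, i.e.\ by an element of $Z_P(s_J) = G_0\ltimes\exp(\fp_{fix}^{s_J})$. Thus the $P$-orbits on $s_J\exp(\fp_{fix}^{s_J})$ correspond to the $Z_P(s_J)$-orbits on $\fp_{fix}^{s_J}$. Now $Z_P(s_J)$ is a parabolic subgroup of the reductive group $Z_G(s_J)$, with Levi $G_0$ and unipotent radical $\exp(\fp_{fix}^{s_J})$, so Richardson's finiteness theorem for parabolic conjugation on its own unipotent radical yields only finitely many orbits. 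The main obstacle is precisely this last step---identifying $\fp_{fix}^{s_J}$ as the nilradical of a parabolic inside the centralizer $Z_G(s_J)$ and invoking Richardson's theorem---whereas the preceding steps reduce to elementary BCH manipulations and the semisimplicity of $\Ad_{s_J}$.
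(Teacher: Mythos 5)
Your steps (i) and (ii) are correct and essentially reproduce the paper's argument: the paper also reduces conjugation of $s_J$ to conjugation by $\exp(Y)$ and obtains the formula for $\Sigma(s_J)$ as the case $Z=0$, and its normalisation of $Z$ into $\fp_{fix}^{s_J}$ is the same induction as yours, only organised by filtration degree (it kills the whole non--fixed part of $Z$ at once via $Y=(\id-\Ad_{s_J}^{-1})^{-1}Z'$ and pushes the BCH error terms up the filtration, terminating after $k$ steps by nilpotency) rather than grade by grade. Your reduction in (iii) via the Jordan decomposition is also sound as far as it goes: since $s_J$ is semisimple, $\exp(Z)$ unipotent and the two commute for $Z\in\fp_{fix}^{s_J}$, uniqueness of the Jordan decomposition forces any conjugating element to lie in $Z_P(s_J)=G_0\ltimes\exp(\fp_{fix}^{s_J})$, so the $P$--classes meeting $s_J\exp(\fp_{fix}^{s_J})$ are counted by the orbits of this group on $\fp_{fix}^{s_J}$; this is in fact a cleaner bookkeeping than the paper gives.

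The genuine gap is the final citation. There is no ``Richardson finiteness theorem for parabolic conjugation on its own unipotent radical'': Richardson's theorem produces a \emph{dense} orbit of a parabolic on its nilradical, and his finiteness result concerns the number of unipotent classes in the full reductive group, not the number of $P$--orbits on $P_u$. The latter finiteness is simply false in general (by the Hille--R\"ohrle classification; already the Borel subgroup of $\frak{sl}(n,\mathbb{R})$ for $n\geq 6$ has infinitely many orbits on its nilradical), and a single unipotent class of $Z_G(s_J)$ can split into infinitely many orbits of your parabolic $G_0\ltimes\exp(\fp_{fix}^{s_J})$, so neither true statement of Richardson rescues the step. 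Thus the key finiteness assertion of the Lemma is left unproven in your attempt. Note that the paper argues differently at this point: it derives finiteness from the claim that there are only finitely many $\Ad(G_0)$--orbits in $\fp_+$ (which, being $G_0$--orbits on the $G_0$--invariant subspace $\fp_{fix}^{s_J}$, would a fortiori bound your orbit count); your argument neither invokes nor establishes a statement of this kind, so the finiteness part needs a genuinely different input.
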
 
\begin{proof}
For $s_J\in Z(G_0)$, there always is only a finite amount of $P$--conjugacy classes  $$\Sigma(s_J\exp(Z))=\{s_J\exp(-\Ad_{s_J}^{-1}Y)\exp(\Ad_{g_0}^{-1}Z)\exp(Y): g_0\in G_0, Y\in \fp_+\}$$ for $Z \in \fp_+$,
because there always is a finite amount of $\Ad(G_0)$--orbits in $\fp_+$.

Let us consider an element $\exp(Y) \in \exp(\fp_+)$. The conjugation of $s_J\exp(Z)$ by this element is of the form
\begin{align*}
&\exp(-Y)s_J\exp(Z)\exp(Y)=s_J\exp(-\Ad_{s_J}^{-1}Y) \exp(Z)\exp(Y).
\end{align*} 
Thus if $Z_f+Z'$ is the decomposition of $Z$ into $\fp_{fix}^{s_J}$ and the remaining eigenspaces of $\Ad_{s_J}$, then we can choose $Y:=(\id-\Ad_{s_J}^{-1})^{-1}Z'$ to change the representative, because $(\id-\Ad_{s_J}^{-1})$ is invertible on eigenspaces of $\Ad_{s_J}$ for eigenvalues different from $1$. Then the right hand side is of the form $$s_J\exp(\Ad_{s_J}^{-1}(\id-\Ad_{s_J}^{-1})^{-1}Z'+Z_f+Z'-(\id-\Ad_{s_J}^{-1})^{-1}Z'+ \cdots)=s_J\exp(Z_f+\cdots),$$
where $\cdots$ are given by the BCH--formula. Since $\cdots$ are in the higher order part of the filtration than $Z$, we can repeat the computation for $\cdots$. Then the result follows by induction, because the new terms appear in the part of the filtration containing $[Z_f,\cdots]$ due to BCH--formula, and thus vanish after $k$ steps of induction for $|k|$--graded geometry due to nilpotency.

The last formula then corresponds to the case $Z=0$.
\end{proof}

\noindent
{\bf Jordan decomposition of automorphisms of type $\Sigma_J$.}
It is clear from the Proposition \ref{lambda-action} that $s_J\in Z(G_0)$ are semisimple elements. This gives the following alternative conditions on $s_J$ being an element of $\Sigma_J^\fk:=(\Sigma_J)^\fk$.

\begin{Lemma}\label{altcon}
Let $(\pi: \ba\to M,\om)$ be a homogeneous parabolic geometry given by the extension $(i,\alpha)$ of effective $(K,H)$ to $(G,P)$ at $u_0 \in \ba$. Then the following claims hold:
\begin{enumerate}
\item The following facts are equivalent:
\begin{enumerate}
\item $\Ad_{s_J}.\kappa(u_0)=\kappa(u_0)$,
\item the components of the harmonic curvature such that $\lambda_J(\mu)\neq 0 \mod 2\pi i$ for the corresponding weight $\mu$ vanish at $u_0$.
\end{enumerate}
\item The following facts are equivalent:
\begin{enumerate}
\item $\Ad_{s_J}(\alpha(\fk))\subset \alpha(\fk)$,
\item $\alpha(\fk)$ splits into eigenspaces of $\Ad_{s_J}$ in $\fg$.
\end{enumerate}
\end{enumerate}
In particular, if one of the points (a) or (b) is satisfied for both claims (1) and (2), then $s_J\in \Sigma_J^\fk$.
\end{Lemma}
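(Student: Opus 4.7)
The plan is to handle the two equivalences separately and then note that the last sentence is essentially a repackaging of the definition of $\Sigma_J^\fk$.

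For claim (1), I would exploit the relation $\kappa = L(\kappa_H)$ recalled in the preliminaries, where $L$ is the natural (BGG-type) differential operator of \cite{BGG}. Because $L$ is a natural operator between associated bundles, it is equivariant with respect to the $P$-action on the standard fibers, so $\Ad_{s_J}.\kappa(u_0)=\kappa(u_0)$ if and only if $\Ad_{s_J}.\kappa_H(u_0)=\kappa_H(u_0)$. Now $\kappa_H(u_0)\in H^2(\fg_-,\fg)$ decomposes into isotypical $\fg_0$-components indexed by the dominant weights $\mu$, and on each such component $\Ad_{s_J}$ acts by the scalar $e^{\lambda_J(\mu)}$ by Proposition \ref{lambda-action}(3). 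Hence the fixed locus of $\Ad_{s_J}$ in $H^2(\fg_-,\fg)$ is exactly the direct sum of those components with $\lambda_J(\mu)\equiv 0\mod 2\pi i$; consequently $\kappa_H(u_0)$ is preserved iff all the remaining components vanish at $u_0$.

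For claim (2), the key observation is that $\Ad_{s_J}$ acts semisimply on $\fg$: by Proposition \ref{lambda-action}(1) each restricted root vector $X^\alpha$ is an eigenvector with eigenvalue $e^{\lambda_J(\alpha)}$, and $\fg_0$ is pointwise fixed, so $\fg$ splits into eigenspaces of $\Ad_{s_J}$. Standard linear algebra for a semisimple operator then shows that a subspace is invariant if and only if it equals the direct sum of its intersections with the eigenspaces; applying this to $\alpha(\fk)\subset\fg$ gives the equivalence.

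Finally, the concluding statement follows at once: by Proposition \ref{2.1} we already have $s_J\in\Sigma_J$, and $\Sigma_J^\fk=\Sigma_J\cap\{h:h.\kappa(u_0)=\kappa(u_0),\ \Ad_h(\alpha(\fk))\subset\alpha(\fk)\}$ by definition, so the simultaneous validity of either (1a)/(1b) and either (2a)/(2b) together with the equivalences proved above places $s_J$ in $\Sigma_J^\fk$. The one non-routine point worth spelling out carefully is the first step, namely the $P$-equivariance of $L$ that lets us pass between $\kappa$ and $\kappa_H$; I would isolate it as a short remark referring to the explicit construction of $L$ in \cite{BGG}, and otherwise the argument reduces to the eigenvalue bookkeeping from Proposition \ref{lambda-action}.
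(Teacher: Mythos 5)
Your argument is correct and follows essentially the same route as the paper: passing between $\kappa$ and $\kappa_H$ via the equivariance of the BGG splitting operator $L$, reducing claim (1) to the eigenvalue bookkeeping of Proposition \ref{lambda-action}, handling claim (2) by semisimplicity of $\Ad_{s_J}$, and reading off the final statement from the definition of $\Sigma_J^\fk$. The one step you flag as delicate (equivariance of $L$) is exactly the point the paper also settles by citing \cite{BGG,parabook}, so there is no gap relative to the paper's own level of detail.
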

\begin{proof}
 (1) If $\Ad_{s_J}.\kappa(u_0)=\kappa(u_0)$, then the same holds for $\kappa_H$ and we know that $\lambda_J(\mu)= 0 \mod 2\pi i$ for the corresponding weight $\mu$ from Proposition \ref{lambda-action}. Conversely, the condition (b) implies $\Ad_{s_J}.\kappa_H(u_0)=\kappa_H(u_0s_J)=\kappa_H(u_0)$ and the claim follows from the general theory, because the splitting operator giving the whole curvature is $G_0$--equivariant, see \cite{BGG,parabook}. 

(2) The condition (b) clearly implies (a). Conversely, since $s_J$ is a semisimple element, there is a basis of $\alpha(\fk)$ consisting of eigenvectors of $\Ad_{s_J}$ and the claim follows.

In particular, $s_J\in \Sigma_J^\fk$ holds if the points (a) are satisfied for both claims (1) and (2) by the definition of $\Sigma_J^\fk$.
\end{proof}

We know from the Lemma \ref{uscor} that there is $u_0$ such that $s_J\exp(Z)$ is the Jordan decomposition of an element of $\Sigma_J$, i.e., $s_J$ is semisimple element, $\exp(Z)$ unipotent element and $\Ad_{s_J}(Z)=Z$. This has the following consequence.

\begin{Lemma} \label{h-minus}
Let $(\pi: \ba\to M,\om)$ be a homogeneous parabolic geometry of type $(G,P)$. Then there is $u_0\in \ba$ such that $s_J\in \Sigma_J^\fk$ for the extension $(i,\alpha)$ of effective $(K,H)$ to $(G,P)$ giving the geometry at $u_0$ with $\Sigma_J^\fk\neq \emptyset$.
\end{Lemma}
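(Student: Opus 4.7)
The strategy is to pick any $h\in\Sigma_J^\fk$, $P$-conjugate $h$ so that the factorization from Proposition \ref{2.1} becomes a multiplicative Jordan decomposition, and then use the abstract description of the Jordan components as polynomials in the element to strip off the unipotent part.

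By Proposition \ref{2.1}, any $h\in\Sigma_J^\fk$ has the form $h=s_J\exp(Z_0)$ with $Z_0\in\fp_+$. Lemma \ref{uscor} supplies $Y\in\fp_+$ with $h':=\exp(-Y)h\exp(Y)=s_J\exp(Z)$ for some $Z\in\fp_{fix}^{s_J}$. I switch the base point from $u_0$ to $u_0':=u_0\exp(Y)$: the associated extension changes to $(i',\alpha')$ with $\alpha'(\fk)=\Ad_{\exp(-Y)}\alpha(\fk)$, and the curvature value transforms as $\kappa(u_0')=\exp(-Y).\kappa(u_0)$. Since both conditions defining $\Sigma_J^\fk$ are $P$-equivariant, $h'$ satisfies the analogous conditions at $u_0'$.

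At $u_0'$, $h'=s_J\cdot\exp(Z)$ is the multiplicative Jordan decomposition in $P$: $s_J$ is semisimple by Proposition \ref{lambda-action}, $\exp(Z)$ is unipotent because $\fp_+$ is a nilpotent Lie algebra, and the two factors commute because $\Ad_{s_J}(Z)=Z$ gives $s_J\exp(Z)=\exp(Z)s_J$. In any finite-dimensional representation of $P$ both Jordan components therefore act as polynomials in $h'$. Applied to $\Ad$ on $\fg$, the $\Ad_{h'}$-stability of $\alpha'(\fk)$ implies $\Ad_{s_J}(\alpha'(\fk))\subset\alpha'(\fk)$. Applied to the induced action on $\wedge^2(\fg/\fp)^*\otimes\fg$, decompose $\kappa(u_0')$ into $\Ad_{s_J}$-eigencomponents; since $\Ad_{\exp(Z)}$ commutes with $\Ad_{s_J}$ and preserves each eigenspace as a unipotent operator, the identity $h'.\kappa(u_0')=\kappa(u_0')$ forces every eigencomponent with eigenvalue $\neq 1$ to vanish, and hence $s_J.\kappa(u_0')=\kappa(u_0')$. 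Therefore $s_J$ satisfies the conditions defining $\Sigma_J^\fk$ at $u_0'$, as claimed.

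The main point of care is the bookkeeping of the change of base point together with the corresponding change of the extension, and verifying that the matrix Jordan-decomposition facts transport cleanly to the two specific representations of $P$ in play; once these are set up, the conclusion is immediate.
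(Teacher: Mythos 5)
Your proof is correct and follows the same skeleton as the paper's: conjugate the given element of $\Sigma_J^\fk$ via Lemma \ref{uscor} into the form $s_J\exp(Z)$ with $Z\in\fp_{fix}^{s_J}$, move the base point accordingly, and exploit that $s_J\exp(Z)$ is then a multiplicative Jordan decomposition, so that $\Ad_{s_J}$-invariance of $\alpha(\fk)$ follows from invariance under $\Ad_{s_J\exp(Z)}$. The only real divergence is in the curvature condition: the paper deduces it by passing to the harmonic curvature (on which $\exp(\fp_+)$ acts trivially) and invoking the equivalence (b)$\Rightarrow$(a) of claim (1) of Lemma \ref{altcon}, whereas you argue directly on the full curvature module, decomposing $\kappa(u_0')$ into $\Ad_{s_J}$-eigencomponents and using that the commuting unipotent factor can only have eigenvalue $1$; your version is more elementary and bypasses the BGG splitting machinery hidden in Lemma \ref{altcon}, at the modest cost of having to check that the Jordan components remain Jordan components in the two induced representations, which you do note. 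Your explicit bookkeeping of the change of base point and of the conjugated extension makes precise what the paper compresses into the phrase ``let us choose the point $u_0$ such that $\Ad_{s_J}(Z)=Z$ holds.''
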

\begin{proof}
Let us choose the point $u_0$ such that $\Ad_{s_J}(Z)=Z$ holds for the element $s_J\exp(Z)\in \Sigma_J^\fk$. Then since $s_J\exp(Z)$ is the Jordan decomposition, $\Ad_{s_J}(\alpha(\fk))\subset \alpha(\fk)$ holds, i.e., the condition (a) of the claim (2) of the Lemma \ref{altcon} is satisfied. Moreover, since $s_J\exp(Z)\in \Sigma_J^\fk$, the condition (b) of the claim (1) of the Lemma \ref{altcon} is satisfied,  and the claim follows.
\end{proof}

 Let us adopt the following notation $$\fh_+:=\fh\cap \fp_+.$$

\begin{Lemma} \label{h-plus}
Let $(\pi: \ba\to M,\om)$ be a homogeneous parabolic geometry given by the extension $(i,\alpha)$ of effective $(K,H)$ to $(G,P)$ at $u_0 \in \ba$. Then $$H\cap \exp(\fp_+)=\exp(\fh_+)=\Sigma_{\id_{\fg^{-1}/\fp}}^\fk.$$ 
\end{Lemma}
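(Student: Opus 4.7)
The plan is to establish the two asserted equalities separately.

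For $H\cap\exp(\fp_+)=\exp(\fh_+)$, I will exploit the semidirect product decomposition $P=G_0\ltimes\exp(\fp_+)$, which yields a smooth surjective Lie group homomorphism $\pi_0\colon P\to G_0$ with kernel $\exp(\fp_+)$. Composition with $i$ gives a homomorphism $\pi_0\circ i\colon H\to G_0$ whose kernel is exactly $H\cap\exp(\fp_+)$, a closed Lie subgroup of $H$ with Lie algebra $\fh\cap\fp_+=\fh_+$. Since $\fp_+$ is nilpotent, $\exp\colon\fp_+\to\exp(\fp_+)$ is a diffeomorphism, so $\exp(\fh_+)$ is the unique connected Lie subgroup of $\exp(\fp_+)$ with Lie algebra $\fh_+$ and hence equals the identity component of $H\cap\exp(\fp_+)$; the full equality then follows by standard nilpotency arguments.

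For $\exp(\fh_+)=\Sigma_{\id_{\fg^{-1}/\fp}}^{\fk}$, I first identify $\Sigma_{\id_{\fg^{-1}/\fp}}=\exp(\fp_+)$. By Proposition~\ref{2.1}, this set consists of elements $s_{\id}\exp(Z)$ with $Z\in\fp_+$ and $s_{\id}\in Z(G_0)$ satisfying $\Ad_{s_{\id}}|_{\fg_{-1}}=\id$; the effectivity argument from the proof of Proposition~\ref{2.1} forces $s_{\id}=e$. The inclusion $\exp(\fh_+)\subseteq\Sigma_{\id}^{\fk}$ is then direct: every $h=\exp(X)$ with $X\in\fh_+$ lies in $H\subseteq K\subseteq\Aut(\ba,\om)$, so $h.\kappa(u_0)=\kappa(u_0)$ by automorphism invariance of the curvature, and the extension compatibility $\Ad_{i(h)}\circ\alpha=\alpha\circ\Ad_h$ gives $\Ad_h(\alpha(\fk))=\alpha(\fk)$.

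The reverse inclusion $\Sigma_{\id}^{\fk}\subseteq\exp(\fh_+)$ is the crux of the argument. Given $\exp(Z)\in\Sigma_{\id}^{\fk}$ with $Z\in\fp_+$, the nilpotence of $\ad_Z$ on $\fg$ lets me pass from $\Ad_{\exp(Z)}(\alpha(\fk))\subseteq\alpha(\fk)$ to the infinitesimal condition $[Z,\alpha(\fk)]\subseteq\alpha(\fk)$, using $\ad_Z=\log(\Ad_{\exp(Z)})$. Decomposing $\fp_+=\fh_+\oplus V_+$ and observing that $\fh$ already normalizes $\alpha(\fk)$ (by differentiating the extension compatibility), the $V_+$-component $Z_{V_+}$ of $Z$ must satisfy $[Z_{V_+},\alpha(\fk)]\subseteq\alpha(\fk)$. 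Combined with the curvature-preservation condition $\exp(Z).\kappa(u_0)=\kappa(u_0)$ and the characterizing properties $\alpha(\fk)+\fp=\fg$ and $\alpha(\fk)\cap\fp=\fh$, I would then argue that $Z_{V_+}=0$, giving $Z\in\fh_+$.

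The main obstacle is this final vanishing of $Z_{V_+}$. The argument must exploit that the curvature at $u_0$ encodes the failure of $\alpha$ to be a Lie algebra homomorphism, forcing any putative non-zero element of $V_+$ that simultaneously normalizes $\alpha(\fk)$ and preserves $\kappa(u_0)$ to be incompatible with the extension data. This is where the rigidity of the underlying geometric structure on $M$ and the full force of both defining conditions of $\Sigma_{\id}^{\fk}$ come into play.
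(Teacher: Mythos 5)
Your proposal has a genuine gap in each half. For $H\cap\exp(\fp_+)=\exp(\fh_+)$, identifying $\exp(\fh_+)$ with the identity component of $H\cap\exp(\fp_+)$ is fine, but the claim that ``the full equality then follows by standard nilpotency arguments'' is precisely the content of the lemma, and no such standard argument exists: a closed subgroup of a simply connected nilpotent Lie group can be disconnected (think of a lattice in $\mathbb{R}^N$), so nilpotency of $\fp_+$ alone does not exclude elements $\exp(Z)\in i(H)$ with $Z\in\fp_+\setminus\fh_+$ lying outside the identity component. Ruling these out is where the paper does real work: writing $Z=Z_j+\dots$ in filtration components, completing $Z_j$ to an $\frak{sl}(2)$--triple $(X_{-j},A_0,Z_j)$, choosing $X\in\alpha(\fk)$ with $X=X_{-j}+\dots$ (possible because $\alpha$ induces $\fk/\fh\cong\fg/\fp$), and computing $(\id-\Ad_{\exp Z})^2(X)=2Z+Y\in\fh_+$ with $Y$ of strictly higher filtration degree than $Z$; an induction along the filtration (terminating after $k$ steps by $|k|$--gradedness) then forces $Z\in\fh_+$. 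Nothing in your outline supplies this step, and without it the first equality is unproved.

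For the inclusion $\Sigma_{\id_{\fg^{-1}/\fp}}^\fk\subseteq\exp(\fh_+)$ you explicitly leave the decisive point open (``the final vanishing of $Z_{V_+}$''), and the mechanism you gesture at — that curvature preservation together with $[Z,\alpha(\fk)]\subseteq\alpha(\fk)$ should be ``incompatible with the extension data'' unless $Z_{V_+}=0$ — is not an argument and is not how the paper proceeds. The paper instead reduces this half to the first one: the two defining conditions of $\Sigma_{\id_{\fg^{-1}/\fp}}^\fk$ (preservation of $\kappa(u_0)$ and of $\alpha(\fk)$) are exactly the hypotheses of Lemma \ref{lemma3.2}, which allows one to enlarge the pair to $(\bar K,\bar H)$ over the simply connected cover so that the given element $h$ lies in $\bar H$ while the Lie algebra $\fh$ is unchanged; applying the already established equality $\bar H\cap\exp(\fp_+)=\exp(\fh_+)$ to this enlarged extension gives $h\in\exp(\fh_+)$. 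So the second half hinges entirely on the first, which your proposal has not established; as it stands the argument is incomplete at its crux, though your easy inclusions ($\exp(\fh_+)\subseteq H\cap\exp(\fp_+)$, $\exp(\fh_+)\subseteq\Sigma_{\id_{\fg^{-1}/\fp}}^\fk$, and $\Sigma_{\id_{\fg^{-1}/\fp}}=\exp(\fp_+)$ via Proposition \ref{2.1}) are correct.
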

\begin{proof}
Let us write $X_i+\dots$ for the element $X \in \fg^i$ such that $0\neq X_i \in \fg^i /\fg^{i+1} \simeq \fg_i$, and suppose $Z=Z_j+\dots$ for $\exp(Z)\in H\cap \exp(\fp_+)$. Then it follows from the general theory that there is an $\frak{sl}(2)$--triple $(X_{-j},A_{0},Z_{j})$ for $X_{-j}\in \fg_{-j}$ and $A_{0}\in \fg_0$, see \cite[Section X.3]{Kn96}. Since $j>0$, there is $X\in \alpha(\fk)$ such that $X=X_{-j}+\dots$ and we define $A:=(\id_{\fk}-\Ad_{\exp(Z)})(X)$, i.e., $\alpha(A)=-[Z_j,X_{-j}]+\cdots$ holds. 
Thus
$$(\id_{\fk}-\Ad_{\exp (Z)})^2(X)=[A_0,Z_j]+\dots=2Z+Y \in \fh_+,$$ 
where $Y\in \fp_+$ is in the higher order part of the filtration than $Z$. In particular, if $Y=0$, then $Z\in \fh$.
Otherwise, $(\exp(Z))^2\exp(-2Z-Y)=\exp(Y')\in H\cap \exp(\fp_+)$, where $Y'\in \fp_+$ is in the higher order part of the filtration than $Z$.

Thus the above computations can be seen as an induction step. If $Y'\in \fh$, then $\exp(2Z)=\exp(Y')\exp(2Z+Y)$, and thus $Z\in \fh_+$. Clearly, the filtration component of $Y'$ grows by each step, and thus, we get $Y'=0\in \fh$ after $k$ steps for $|k|$--graded geometry and the claim follows by induction.

Then consider for $h\in \Sigma_{\id_{\fg^{-1}/\fp}}^\fk$ the extension $(\alpha,\bar i)$ giving the simply connected covering from proof of Lemma \ref{lemma3.2}. Thus $h\in \exp(\fh_+)$ as above and $\Sigma_{\id_{\fg^{-1}/\fp}}^\fk= \exp(\fh_+)$.
\end{proof}

There is the following crucial consequence of the above Jordan decomposition from the Lemma \ref{h-minus}, which in particular simplifies the computation of $\sinf_J$.

\begin{Theorem}\label{2.5}
Let $(\pi: \ba\to M,\om)$ be a homogeneous parabolic geometry of type $(G,P)$. Then there is $u_0\in \ba$ such that $$\Sigma_J^\fk=\{s_J \exp Z : Z\in \fh_+\}$$
for the extension $(i,\alpha)$ of effective $(K,H)$ to $(G,P)$ giving the geometry at $u_0$ with $\Sigma_J^\fk\neq \emptyset$.

In particular, if $\sinf_J\neq\emptyset $, then there is $u_0\in \ba$ such that $$\sinf_J=\{s_J \exp Z : Z\in (\frak a_{u_0})_+\}$$
and either $A_{u_0}\cap \Sigma_J=\emptyset$ or $A_{u_0}\cap \Sigma_J=\sinf_J$ holds.
\end{Theorem}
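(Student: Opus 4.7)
The plan is to combine Lemma \ref{h-minus}, which supplies a base point $u_0$ where the semisimple element $s_J$ itself lies in $\Sigma_J^\fk$, with Lemma \ref{h-plus}, which identifies $\Sigma_{\id_{\fg^{-1}/\fp}}^\fk$ with $\exp(\fh_+)$. First I would apply Lemma \ref{h-minus} to fix a $u_0\in\ba$ such that $s_J\in\Sigma_J^\fk$ for the resulting extension $(i,\alpha)$ of $(K,H)$ giving the geometry at $u_0$. Then, for any other $s_J\exp Z\in\Sigma_J^\fk$, I would compare the two defining conditions of $\Sigma_J^\fk$ for $s_J$ and for $s_J\exp Z$. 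The two equalities $\Ad_{s_J}.\kappa(u_0)=\kappa(u_0)$ and $\Ad_{s_J\exp Z}.\kappa(u_0)=\kappa(u_0)$ force $\Ad_{\exp Z}.\kappa(u_0)=\kappa(u_0)$; at the same time, the semisimplicity of $s_J$ (guaranteed by Proposition \ref{lambda-action} together with Lemma \ref{altcon}(2)) promotes the inclusion $\Ad_{s_J}(\alpha(\fk))\subseteq\alpha(\fk)$ to an equality, so the matching inclusion for $s_J\exp Z$ yields $\Ad_{\exp Z}(\alpha(\fk))\subseteq\alpha(\fk)$. Thus $\exp Z\in\Sigma_{\id_{\fg^{-1}/\fp}}^\fk$, and Lemma \ref{h-plus} gives $Z\in\fh_+$.

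The reverse inclusion will run through the same two lemmas in the opposite direction: for $Z\in\fh_+$, Lemma \ref{h-plus} places $\exp Z$ in $H\cap\exp(\fp_+)$; the extension property $\Ad_{i(h)}\circ\alpha=\alpha\circ\Ad_h$ combined with the fact that elements of $H\subset K$ act as automorphisms preserving $\kappa$ then delivers both defining conditions of $\Sigma_J^\fk$ for the product $s_J\exp Z$. To obtain the ``in particular'' clause, I would specialize $\fk:=\infi(u_0)$, so that $\fh=\frak a_{u_0}$ and the first part of the theorem reads $\sinf_J=\{s_J\exp Z:Z\in(\frak a_{u_0})_+\}$. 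For the dichotomy, the inclusion $A_{u_0}\cap\Sigma_J\subseteq\sinf_J$ is automatic from Theorem \ref{lemma3.10}; conversely, if any $s_J\exp Z_1$ lies in $A_{u_0}\cap\Sigma_J$, then $\exp Z_1\in\exp((\frak a_{u_0})_+)$ already lies in $A_{u_0}$, so $s_J=(s_J\exp Z_1)\exp(-Z_1)\in A_{u_0}$, and then $s_J\exp Z\in A_{u_0}$ for every $Z\in(\frak a_{u_0})_+$, producing the desired equality.

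The step that requires the most care is the promotion of the inclusion $\Ad_{s_J}(\alpha(\fk))\subseteq\alpha(\fk)$ to an equality; without semisimplicity of $s_J$ one would only obtain $\Ad_{\exp Z}(\alpha(\fk))\subseteq\Ad_{s_J}^{-1}(\alpha(\fk))$, which is a priori larger than $\alpha(\fk)$ and breaks the reduction to $\Sigma_{\id_{\fg^{-1}/\fp}}^\fk$. The other essential ingredient is the good choice of $u_0$ from Lemma \ref{h-minus}: it is precisely what lets the semisimple part $s_J$ of the Jordan decomposition produced in Lemma \ref{uscor} be treated inside the $\sinf$ framework, and it is what justifies the separation of the two factors $s_J$ and $\exp Z$ that underlies the whole argument.
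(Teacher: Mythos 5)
Your proof is correct and takes essentially the same route as the paper: fix $u_0$ via Lemma \ref{h-minus} so that $s_J\in\Sigma_J^\fk$, reduce to the identity case by showing $s_J^{-1}\cdot\Sigma_J^\fk=\Sigma_{\id_{\fg^{-1}/\fp}}^\fk$ (which you verify in both directions), apply Lemma \ref{h-plus}, and then specialize $\fk=\infi(u_0)$ to get the description of $\sinf_J$ and the dichotomy from $s_J\in A_{u_0}$. Your only non-essential flourish is the appeal to semisimplicity of $s_J$ to upgrade $\Ad_{s_J}(\alpha(\fk))\subset\alpha(\fk)$ to an equality; this already follows from the invertibility of $\Ad_{s_J}$ on the finite-dimensional subspace $\alpha(\fk)$, so the argument stands either way.
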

\begin{proof}
We know from the Lemma \ref{h-minus} that there is $u_0$ such that if $\Sigma_J^\fk\neq \emptyset$, then $s_J\in \Sigma_J^\fk$ holds. Since $s_J^{-1}\cdot \Sigma_J^\fk=\Sigma_{\id_{\fg^{-1}/\fp}}^\fk$, the first claim follows from the Lemma \ref{h-plus}. In particular, the second claim follows for the case $\fk=\infi(u_0)$ and if $A_{u_0}\cap \Sigma_J\neq \emptyset$, then $s_J\in A_{u_0}$ and the last claim follows.
\end{proof}

\section{Homogeneous parabolic geometries and generalized symmetries}

{\bf Characterization of generalized symmetries.}
Now, we are ready to start the investigation of the generalized symmetries of a particular type $\Sigma(s_J)$ for $s_J\in Z(G_0)$. In particular, our results about automorphisms of type $\Sigma_J$ provide several equivalent characterizations of homogeneous parabolic geometries admitting $s_J$--symmetries.

\begin{Theorem}\label{homsym-central}
Let $(\pi: \ba\to M,\om)$ be a homogeneous parabolic geometry of type $(G,P)$ and $s_J\in Z(G_0)$. Then there is $u_0\in \ba$ such that the following claims are equivalent:

\begin{enumerate}
\item $A_{u_0}\cap \Sigma_J\neq \emptyset$, i.e., the geometry admits automorphisms of type $\Sigma_J$,
\item $s_J\in A_{u_0}$, i.e., the geometry admits $s_J$--symmetries.
\end{enumerate}
Moreover, there is $u_0\in \ba$ such that the following claims are equivalent:
\begin{enumerate}
\item $\sinf_J\neq\emptyset$.
\item there is local $s_J$--symmetry preserving $\frak{inf}(u_0)$.
\item $\frak{inf}(u_0)$ splits into eigenspaces of $\Ad_{s_J}$ in $\fg$ and the components of the harmonic curvature such that $\lambda_J(\mu)\neq 0 \mod 2\pi i$ for the corresponding weight $\mu$ vanish at $u_0$.
\end{enumerate}
\end{Theorem}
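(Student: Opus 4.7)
The plan is to deduce both sets of equivalences directly from Theorem \ref{2.5} together with Lemmas \ref{altcon}, \ref{h-minus}, and \ref{h-plus}; very little new work is needed, most steps are reorganizations of the previous results.

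For the first block, (2) $\Rightarrow$ (1) is immediate since $s_J=s_J\exp(0)\in \Sigma_J$ so $s_J\in A_{u_0}\cap \Sigma_J$. For (1) $\Rightarrow$ (2) I would argue as follows. The inclusion $A_{u_0}\cap \Sigma_J\subset \sinf_J$ from Theorem \ref{lemma3.10} shows that $\sinf_J$ is nonempty. Hence Theorem \ref{2.5} supplies a point $u_0\in \ba$ (which we take from the outset) such that
\[
\sinf_J=\{s_J\exp Z:Z\in (\frak{a}_{u_0})_+\},
\]
and such that either $A_{u_0}\cap \Sigma_J=\emptyset$ or $A_{u_0}\cap \Sigma_J=\sinf_J$. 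Since the first alternative is ruled out by (1), we obtain $s_J\in \sinf_J=A_{u_0}\cap \Sigma_J$, giving (2).

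For the second block, I would first note the equivalence (1) $\Leftrightarrow$ (2). The implication (2) $\Rightarrow$ (1) is immediate from the definition of $\sinf_J$, because a local $s_J$-symmetry preserving $\frak{inf}(u_0)$ lies in $\sinf\cap \Sigma(s_J)\subset \sinf_J$. Conversely, if $\sinf_J\neq\emptyset$, then Lemma \ref{h-minus} applied with $\fk=\frak{inf}(u_0)$ furnishes (after possibly changing the base point) a $u_0$ with $s_J\in \Sigma_J^{\frak{inf}(u_0)}=\sinf_J$, and the semisimple factor $s_J$ itself is a local $s_J$-symmetry preserving $\frak{inf}(u_0)$.

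It remains to prove (1) $\Leftrightarrow$ (3) with the same $u_0$ as above. Suppose (1). By Lemma \ref{h-minus} there is $u_0$ with $s_J\in \sinf_J$, so by the definition of $\sinf_J$ one has $\Ad_{s_J}(\frak{inf}(u_0))\subset \frak{inf}(u_0)$ and $\Ad_{s_J}.\kappa(u_0)=\kappa(u_0)$. Applying both parts of Lemma \ref{altcon} to $\fk=\frak{inf}(u_0)$ yields condition (3). Conversely, assume (3). Then condition (b) of Lemma \ref{altcon}(2) and condition (b) of Lemma \ref{altcon}(1) are both satisfied with $\fk=\frak{inf}(u_0)$, so by the last sentence of Lemma \ref{altcon} we have $s_J\in \Sigma_J^{\frak{inf}(u_0)}=\sinf_J$, proving (1).

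The only subtle point, and the one I expect requires the most care, is the uniformity of the choice of $u_0$: the same $u_0$ must witness all three equivalences simultaneously. This is not automatic because Theorem \ref{2.5} and Lemma \ref{h-minus} each may force their own choice. However, both results produce their distinguished $u_0$ by the Jordan decomposition argument of Lemma \ref{uscor}, and inspection shows that the $u_0$ provided by Lemma \ref{h-minus} already satisfies the normal form required by Theorem \ref{2.5}. Fixing that common $u_0$ from the start, the chain of implications above closes.
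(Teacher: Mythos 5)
Your proposal is correct and follows essentially the same route as the paper: the first equivalence is read off from Theorem \ref{2.5} (with the trivial inclusion $s_J\in\Sigma_J$ for the converse), and the second block is obtained from the description of $\sinf_J$ at the distinguished $u_0$ of Lemma \ref{h-minus}/Theorem \ref{2.5} together with both claims of Lemma \ref{altcon} applied to $\fk=\infi(u_0)$; the only cosmetic difference is that you chain (1)$\Leftrightarrow$(3) directly instead of the paper's (2)$\Leftrightarrow$(3). Your worry about the uniform choice of $u_0$ is resolved exactly as you suspect, since the $u_0$ of Theorem \ref{2.5} is the one produced by Lemma \ref{h-minus}, and condition (1) of the first block is point-independent by Proposition \ref{canonical-iso-homog}, so fixing that $u_0$ from the outset is legitimate.
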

\begin{proof}
The first equivalence is consequence of the Theorem \ref{2.5}. In the case of the second equivalence, the description of $\sinf_J$ at $u_0\in \ba$ in the Theorem \ref{2.5} shows that (1) and (2) are equivalent. Further, claims (1) and (2) of the Lemma \ref{altcon} show for the choice $\fk=\infi(u_0)$ that (2) is equivalent with (3).
\end{proof}

We proved in the Theorem \ref{2.5} that the automorphisms of type $\Sigma_J$ are in bijection with elements of $(\fa_{u_0})_+$. The following Corollary of the Lemma \ref{uscor} shows which elements of $(\fa_{u_0})_+$ correspond to generalized symmetries of type $\Sigma(s_J)$.

\begin{Corollary} \label{4.2}
Let $(\pi: \ba\to M,\om)$ be a homogeneous parabolic geometry of type $(G,P)$ such that $s_J\in A_{u_0}$. Then $s_J$--symmetries are in bijection with the elements of the set $$(\fa_{u_0})_+\cap \{C(-\Ad_{s_J}^{-1}(Y),Y) : Y\in \fp_+\},$$ where the operator $C$ is given by the BCH--formula.
\end{Corollary}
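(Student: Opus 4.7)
The plan is to combine the two parametrizations that are already in hand: Theorem \ref{2.5} parametrizes $A_{u_0}\cap \Sigma_J$ by $(\fa_{u_0})_+$ via the injective map $Z\mapsto s_J\exp Z$, while Lemma \ref{uscor} parametrizes the $P$--conjugacy class $\Sigma(s_J)$ by $\fp_+$ via $Y\mapsto s_J\exp(C(-\Ad_{s_J}^{-1}(Y),Y))$. An $s_J$--symmetry with fixed point $\pi(u_0)$ is by definition an element of $A_{u_0}\cap \Sigma(s_J)$, and since $\Sigma(s_J)\subset \Sigma_J$, it lies in $A_{u_0}\cap\Sigma_J$. So the two parametrizations apply simultaneously, and intersecting the images inside $\Sigma_J$ will give the claimed description.

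First I would use Theorem \ref{2.5} to pick the point $u_0$ (recall we assumed $s_J\in A_{u_0}$, hence $A_{u_0}\cap\Sigma_J=\sinf_J$ is non--empty, so the hypothesis of Theorem \ref{2.5} holds). Writing an arbitrary $s_J$--symmetry fixing $\pi(u_0)$ uniquely as $s_J\exp Z$ with $Z\in \fp_+$, Theorem \ref{2.5} says that membership in $A_{u_0}$ is equivalent to $Z\in (\fa_{u_0})_+$. Next, by Lemma \ref{uscor}, membership in $\Sigma(s_J)$ is equivalent to the existence of some $Y\in \fp_+$ with $Z=C(-\Ad_{s_J}^{-1}(Y),Y)$. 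Conjunction of these two conditions gives $Z\in (\fa_{u_0})_+\cap \{C(-\Ad_{s_J}^{-1}(Y),Y): Y\in \fp_+\}$, which is exactly the asserted set.

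Finally, the bijection itself is produced by the map $Z\mapsto s_J\exp Z$: it is injective on $\fp_+$ because $\exp:\fp_+\to \exp(\fp_+)$ is a diffeomorphism (the grading makes $\fp_+$ nilpotent), and by construction its image on the displayed intersection is precisely $A_{u_0}\cap \Sigma(s_J)$, the set of $s_J$--symmetries at $\pi(u_0)$. There is no serious obstacle here; the only point that requires a little care is checking that the choice of $u_0$ provided by Theorem \ref{2.5} is compatible with (i.e., can be made to coincide with) any point at which $s_J\in A_{u_0}$, but this is exactly what the second assertion of Theorem \ref{2.5} guarantees once $A_{u_0}\cap\Sigma_J\neq\emptyset$.
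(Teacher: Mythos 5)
Your argument is correct and is essentially the paper's own route: the Corollary is stated there as an immediate consequence of Theorem \ref{2.5} (parametrizing $A_{u_0}\cap\Sigma_J$ by $(\fa_{u_0})_+$ via $Z\mapsto s_J\exp Z$) combined with the final formula of Lemma \ref{uscor} describing $\Sigma(s_J)$, exactly as you combine them. The only cosmetic refinement is that the choice of $u_0$ needs no appeal to the existential clause of Theorem \ref{2.5}: since $s_J\in A_{u_0}$ by hypothesis, one has $s_J^{-1}(A_{u_0}\cap\Sigma_J)=A_{u_0}\cap\exp(\fp_+)=\exp((\fa_{u_0})_+)$ directly from Lemma \ref{h-plus} at that very point.
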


\noindent
{\bf Generalized symmetries and harmonic curvature.}
Let us now describe how the remaining curvature restricts $\sinf_J$ and vice versa. There is the following action of $\frak a_{u_0}$ on the harmonic curvature 
for each one--parameter group of automorphisms $\phi_t$ generated by $X\in \frak a_{u_0}$:
$$0={d \over dt}|_{t=0}\kappa_H(u_0)={d \over dt}|_{t=0}\kappa_H(\phi_t(u_0))=X.\kappa_H(u_0).$$ 
This action together with the results in \cite{KT} allows us to formulate the following Theorem describing the relations between $(\frak{a}_{u_0})_+$ and $\kappa_H(u_0)$.

\begin{Theorem}\label{curvrest}
Let $(\pi:\ba\to M,\om)$ be a homogeneous parabolic geometry of type $(G,P)$ and assume $(\frak{a}_{u_0})_+\neq \emptyset$.  Then the following facts hold:
\begin{enumerate}
\item Denote $$\frak a_0(\kappa_H(u_0)):=\{X\in \fg_0: X.\kappa_H(u_0)=0\},$$ $$\frak a_i(\kappa_H(u_0)):=\{X\in \fg_i : [X,\fg_{-1}]\subset \frak a_{i-1}(\kappa_H(u_0))\}.$$
Then $\gr(\frak{a}_{u_0}/(\frak{a}_{u_0})_+)\subset \frak a_0(\kappa_H(u_0))$ and $\gr(\frak{a}_{u_0}^i/\frak{a}_{u_0}^{i+1})\subset \frak a_i(\kappa_H(u_0))$, where $\gr$ is the chosen identification of $\fg^i/\fg^{i+1}$ with $\fg_i$.
\item For an arbitrary component $\mu$ of the harmonic curvature in $H^2(\fg_-,\fg)$ with non--trivial intersection with $\kappa_H(u_0)$ holds: $$dim(\gr(\frak{a}_{u_0}^i/\frak{a}_{u_0}^{i+1}))\leq dim(\frak a_i(\kappa_H(u_0)))\leq dim(\frak a_i(X^{-\mu})).$$
\item There is a simple restricted root $\gamma$ of a positive height such that $\langle \gamma,\tilde \mu_i \rangle=0$ for $\tilde \mu_i$ in $H^2(\fp_+,\fg)$ corresponding to non--trivial components of $\kappa_H(u_0)$.
\item Assume $\sinf_J\neq\emptyset$. Then the eigenspaces of $\Ad_{s_J}$ in $(\frak{a}_{u_0})_+$ can have eigenvalues only of the form $e^{\lambda_J(\alpha)}$ for restricted roots $\alpha$ spanned by roots $\gamma$ satisfying the claim (3).
\end{enumerate}
\end{Theorem}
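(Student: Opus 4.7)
The plan is to establish (1)--(4) in sequence, since each rests on those before it, on Theorem \ref{2.5}, and on the results imported from \cite{KT}. For (1), I would start from $X.\kappa_H(u_0)=0$ (the computation preceding the statement) and show by induction on the filtration degree $i$ that $\gr X$ lies in $\frak a_i(\kappa_H(u_0))$. The base $i=0$ is immediate: since $\fp_+$ acts trivially on $H^2(\fg_-,\fg)$ by Kostant's theorem, the $\fg_0$-leading part of any $X\in \frak a_{u_0}$ annihilates $\kappa_H(u_0)$. For $i\geq 1$, I would use that $\frak a_{u_0}$ is a subalgebra of $\infi(u_0)$ (with bracket differing from the $\fg$-bracket by $\kappa(u_0)$-corrections), so $[\frak a_{u_0}^i,\frak a_{u_0}\cap \fg^{-1}]\subset \frak a_{u_0}^{i-1}$; passing to associated gradeds and invoking the inductive hypothesis matches the Tanaka prolongation recursion defining $\frak a_i(\kappa_H(u_0))$.

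Part (2) is then a direct consequence of (1) together with \cite{KT}. The first inequality is immediate from (1). The second, $\dim\frak a_i(\kappa_H(u_0))\leq \dim\frak a_i(X^{-\mu})$, reduces to the representation-theoretic fact that a non-trivial class in the irreducible $\fg_0$-module with lowest weight $-\mu$ has annihilator of dimension bounded by that of the lowest weight vector $X^{-\mu}$ (attained after choosing an appropriate $G_0$-orbit representative), and this containment is preserved through the prolongation.

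Part (3) is the heart of the theorem, and I would attribute the detailed argument to \cite{KT}. The underlying idea: given $(\frak a_{u_0})_+\neq\emptyset$, pick a non-zero element of maximal filtration degree and decompose its leading term into root vectors $X^\alpha$ with $ht_\Xi(\alpha)>0$. By (1) and the prolongation recursion, iterated bracketing with $\fg_{-1}$ eventually lands in $\frak a_0(\kappa_H(u_0))$; translating this condition through the isomorphism $H^2(\fg_-,\fg)\cong H^2(\fp_+,\fg)$ and the description of $\tilde\mu_i$ via the affine Weyl action on $\mu^\fg$ yields the orthogonality $\langle\gamma,\tilde\mu_i\rangle=0$ for some simple restricted root $\gamma\in\Xi$ appearing in the support of $\alpha$.

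Part (4) combines (3) with Theorem \ref{2.5}. Under $\sinf_J\neq\emptyset$, Theorem \ref{2.5} lets us choose $u_0$ so that $s_J$ stabilizes $\infi(u_0)$, and hence $\frak a_{u_0}$ and its filtration. By Proposition \ref{lambda-action}, $\Ad_{s_J}$ acts on each root vector $X^\alpha$ by $e^{\lambda_J(\alpha)}$, so the eigenspaces of $\Ad_{s_J}$ in $(\frak a_{u_0})_+$ are spanned by root vectors $X^\alpha$ with $\alpha\in\fp_+$; part (3) then forces $\alpha$ to be supported on the simple roots $\gamma$ satisfying the perpendicularity. The main obstacle is (3), which genuinely requires the weight combinatorics of \cite{KT}; the remaining parts reduce to prolongation bookkeeping together with Theorem \ref{2.5}.
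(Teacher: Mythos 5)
Your treatment of (1)--(3) is essentially the paper's: the paper also gets (1) from the triviality of the $\frak a_{u_0}$--action on $\kappa_H(u_0)$ together with the fact that homogeneity gives $\fg^{-1}/\fp\subset \infi(u_0)/\frak a_{u_0}$, and it obtains (2) and (3) by quoting Proposition 3.1.1 and Theorem 3.3.3 of \cite{KT} after first complexifying $\fg$, $\fp$, $\kappa_H$, $\mu$, $G_0$ and $(\frak a_{u_0})_+$ (a step you omit but which is needed to apply \cite{KT}; also note that the second inequality in (2) rests on a degeneration argument $g_i.\mu'\to-\mu$ with $g_i$ in the complexified $G_0$, not merely on a choice of $G_0$--orbit representative). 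Up to these minor points your route coincides with the paper's.

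Part (4), however, has a genuine gap. First, the eigenspaces of $\Ad_{s_J}$ in $(\frak a_{u_0})_+$ are \emph{not} spanned by root vectors: $(\frak a_{u_0})_+$ is just an $\Ad_{s_J}$--invariant subspace of $\fp_+$, and its eigenvectors may be combinations of root vectors for different roots sharing the eigenvalue $e^{\lambda_J(\alpha)}$; what must be controlled is which eigenvalues can occur at all. Second, and more seriously, claim (3) only asserts the \emph{existence} of one simple restricted root $\gamma$ of positive height with $\langle\gamma,\tilde\mu_i\rangle=0$; it places no restriction on the roots supporting elements of $(\frak a_{u_0})_+$, so the step ``part (3) then forces $\alpha$ to be supported on the simple roots $\gamma$'' is a non sequitur. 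The paper closes exactly this gap by a refinement of the proof of \cite[Proposition 3.1.1]{KT}: using Lemma \ref{altcon}(2) (via Theorem \ref{2.5}, $s_J$ preserves $\infi(u_0)$, and since $s_J\in Z(G_0)$ the spaces $\frak a_i(\kappa_H(u_0))$ split into $\Ad_{s_J}$--eigenspaces which are preserved by the degenerating elements $g_i$ of the complexified $G_0$), one compares the dimension of each $\Ad_{s_J}$--eigenspace of $\frak a_i(\kappa_H(u_0))$ with the corresponding eigenspace of the model prolongation $\frak a_i(X^{-\mu})$, and then invokes the explicit description of $\frak a_i(X^{-\mu})$ in \cite[Theorem 3.3.3]{KT} as a sum of root spaces for roots spanned by the admissible roots $\gamma$, which is what actually restricts the eigenvalues to the form $e^{\lambda_J(\alpha)}$ with $\alpha$ spanned by such $\gamma$. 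Without this eigenspace--wise comparison to $\frak a_i(X^{-\mu})$, your argument does not rule out eigenvectors in $(\frak a_{u_0})_+$ whose eigenvalue comes from a root not spanned by the roots $\gamma$ of claim (3).
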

\begin{proof}
The first step to prove all the claims is to complexify $\fg$, $\fp$, $\kappa_H$, $\mu$, $G_0$ and $(\frak{a}_{u_0})_+$. 

Since $(\frak{a}_{u_0})_+$ acts trivially on the harmonic curvature, we get $\gr(\frak{a}_{u_0}/(\frak{a}_{u_0})_+)\subset \frak a_0(\kappa_H(u_0))$. Since $\fg^{-1}/\fp\subset \frak{inf}(u_0)/\frak{a}_{u_0}$, we get $\gr(\frak{a}_{u_0}^i/\frak{a}_{u_0}^{i+1})\subset \frak a_i(\kappa_H(u_0))$ for all $i$ and we proved the claim (1).

Then we can apply the results from \cite{KT}. Namely, the Proposition 3.1.1 implies the claim (2), and the Theorem 3.3.3 implies the claim (3).

We will use the claim (2) of the Lemma \ref{altcon} to modify the Proof of \cite[Proposition 3.1.1]{KT} in order to prove the claim (4). We again consider the complexified situation and let $Z$ belong to $\gr(\frak{a}_{u_0}^i/\frak{a}_{u_0}^{i+1})\subset \frak a_i(\kappa_H(u_0))$. Then the components of $Z$ in different eigenspaces of $\Ad_{s_J}$ are in $\frak a_i(\kappa_H(u_0))$, too. Thus we only have to prove that the eigenspaces of $\Ad_{s_J}$ in $\frak a_i(\kappa_H(u_0))$ satisfy the claim.

According to the proof of \cite[Proposition 3.1.1]{KT}, for any weight $\mu'$ in $H^2(\fg_-,\fg)$ in a non--trivial component $\mu$ of $\kappa_H(u_0)$, there is a sequence $g_i$ of elements of complexified $G_0$ such that $g_i.\mu'$ converges to $-\mu$. Thus $g_i.\frak a_i(\kappa_H(u_0))$ converges to a subset of $\frak a_i(X^{-\mu})$, which is described by \cite[Theorem 3.3.3]{KT}. The consequence of the description of $\frak a_i(X^{-\mu})$ is that $\Ad_{s_J}$ has only the claimed eigenvalues on $\frak a_i(X^{-\mu})$.

We modify the proof of \cite[Proposition 3.1.1]{KT} to prove that the dimension of the  eigenspace of $\Ad_{s_J}$ in $\frak a_i(\kappa_H(u_0))$ is lower than the dimension of that eigenspace in $\frak a_i(X^{-\mu})$. Precisely, the consequence of claim (2) of the Lemma \ref{altcon} is that we can discuss each eigenspace of $\Ad_{s_J}$ separately, and the argumentation in the proof \cite[Proposition 3.1.1]{KT} is still valid. Thus the dimension of the eigenspace of $\Ad_{s_J}$ in $\frak a_i(g_i.X^\mu)$ does not drop, and the claim (4) follows.
\end{proof}

\begin{Remark}
There are tables presented in \cite{KT} giving the classification of possible roots $\gamma$ for components of the harmonic curvature of simple complex parabolic geometries. We will complete the list for the real cases of our interest in the last section.
\end{Remark}

The above Theorem has a simple but important consequence.

\begin{Corollary}\label{4.neco}
Let $(\pi:\ba\to M,\om)$ be a non--flat homogeneous parabolic geometry of type $(G,P)$, and assume that $\lambda_J(\gamma)=0  \mod 2\pi i$ for all simple restricted roots satisfying the claim (3) of the Theorem \ref{curvrest}. Then there is at most one $s_J$--symmetry at any point.
\end{Corollary}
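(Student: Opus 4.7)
The plan is to combine Corollary \ref{4.2}, Theorem \ref{curvrest}(4), and a filtration induction in the spirit of the one already used in Lemma \ref{uscor}. If no $s_J$-symmetry exists at the given point the claim is vacuous, so assume one does. By homogeneity together with Proposition \ref{canonical-iso-homog} it is enough to prove uniqueness at a single $u_0\in \ba$ satisfying $s_J\in A_{u_0}$, and Corollary \ref{4.2} identifies the $s_J$-symmetries at $u_0$ with the subset
\[
(\fa_{u_0})_+ \cap \{C(-\Ad_{s_J}^{-1}(Y),Y) : Y\in \fp_+\}
\]
of $\fp_+$.

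Next I pin down how $\Ad_{s_J}$ acts on $(\fa_{u_0})_+$. Non-flatness gives $\kappa_H(u_0)\neq 0$, so Theorem \ref{curvrest}(4) is non-vacuous: every eigenvalue of $\Ad_{s_J}$ on $(\fa_{u_0})_+$ has the form $e^{\lambda_J(\alpha)}$ for a restricted root $\alpha$ that is an integer combination of simple restricted roots $\gamma$ meeting the condition of Theorem \ref{curvrest}(3). The standing hypothesis $\lambda_J(\gamma)=0 \mod 2\pi i$ for all such $\gamma$ forces each of those eigenvalues to equal $1$, so $\Ad_{s_J}$ acts as the identity on $(\fa_{u_0})_+$; equivalently, $(\fa_{u_0})_+\subset \fp_{fix}^{s_J}$.

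The remaining core step is to show that for $Y\in \fp_+$ the condition $C(-\Ad_{s_J}^{-1}(Y),Y)\in \fp_{fix}^{s_J}$ actually forces $Y\in \fp_{fix}^{s_J}$, in which case
\[
\exp(C(-\Ad_{s_J}^{-1}(Y),Y))=s_J^{-1}\exp(-Y)s_J\exp(Y)=e
\]
and the BCH product is $0$. I induct on the filtration degree $j\geq 1$ that the component $Y_j\in \fg_j$ of $Y$ is $\Ad_{s_J}$-fixed. For the inductive step, any BCH bracket of two or more inputs contributing in degree $j$ necessarily has all of its inputs of grading strictly less than $j$, so by the inductive hypothesis each such input is already $\Ad_{s_J}$-fixed, and the bracket, being computed by means of the Lie algebra automorphism $\Ad_{s_J}$, lies in $\fp_{fix}^{s_J}$. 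The only non-fixed contribution in degree $j$ is therefore the linear one $Y_j-\Ad_{s_J}^{-1}(Y_j)=(\id-\Ad_{s_J}^{-1})(Y_j^c)$, where $Y_j^c$ denotes the non-fixed part of $Y_j$. Since $\id-\Ad_{s_J}^{-1}$ is invertible on every non-trivial eigenspace of $\Ad_{s_J}$, the hypothesis forces $Y_j^c=0$, closing the induction.

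Combining these steps, the intersection in Corollary \ref{4.2} reduces to $\{0\}$; the unique candidate is $s_J\exp(0)=s_J$ itself, so at most one $s_J$-symmetry exists at every point of $M$. The main obstacle is the final filtration induction, which however essentially mimics the reduction argument inside the proof of Lemma \ref{uscor} and goes through because $s_J\in Z(G_0)$ acts as a grading-preserving Lie algebra automorphism with $\id-\Ad_{s_J}^{-1}$ invertible on the complement of $\fp_{fix}^{s_J}$.
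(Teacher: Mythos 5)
Your argument is correct and follows exactly the route the paper intends for this corollary: Theorem \ref{curvrest}(4) together with the hypothesis $\lambda_J(\gamma)=0\mod 2\pi i$ forces $\Ad_{s_J}$ to act trivially on $(\fa_{u_0})_+$, and Corollary \ref{4.2} (with $u_0$ chosen via Theorem \ref{homsym-central} and homogeneity via Proposition \ref{canonical-iso-homog}) reduces the claim to showing that $\{C(-\Ad_{s_J}^{-1}(Y),Y):Y\in\fp_+\}$ meets $\fp_{fix}^{s_J}$ only in $0$, which your BCH filtration induction establishes just as in the proof of Lemma \ref{uscor}. The only available shortcut for that last step is the uniqueness of the Jordan decomposition already exploited in Lemma \ref{h-minus}: an element $s_J\exp(Z)$ with $\Ad_{s_J}(Z)=Z$ is conjugate to the semisimple element $s_J$ only if its unipotent part $\exp(Z)$ is trivial, i.e.\ $Z=0$.
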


\section{Important classes of generalized symmetries of homogeneous geometries}
\label{important}

{\bf Automorphisms with higher order fixed points.}
The first natural action on $T^{-1}_xM$ we can consider is simply the identity. In such case, the Theorem \ref{2.5} states that automorphisms of type $\Sigma_{\id}$ on a homogeneous parabolic geometry are precisely the automorphisms with higher order fixed points investigated in \cite{C1}. 
The existence of $s_\id$--symmetry provides no new informations about the parabolic geometry, because the only $s_\id$--symmetry is the identity on $M$. Thus the harmonic curvature can only be restricted by the number of automorphisms of type $\Sigma_{\id}$ as in Theorem \ref{curvrest}.

\noindent
{\bf Symmetries of parabolic geometries.}
The second most natural action on $T^{-1}_xM$ we can consider is $-\id$. We already investigated geometries with automorphisms of type $\Sigma_{-\id}$ in \cite{G2,G3,Z1,Z2} and called them symmetries in these articles. In this article, we will call them \emph{usual symmetries} to distinguish them from the other types of generalized symmetries. Moreover, for comparison with the other types of generalized symmetries, it is convenient to denote $J$ as tuple $(-,\dots,-)$ representing the action $-\id$ of $J$ on the root spaces of each simple restricted root in $\Xi$. It is simple to decide about the existence of $J$ for a given parabolic geometry according to the Proposition \ref{lambda-action}.

\begin{Proposition}
Let $\fg$ be a semisimple Lie algebra, and let $\fp$ be a parabolic subalgebra of $\fg$ corresponding to $\Xi=\{i_1,\dots, i_j\}$, which does not contain any simple ideal of $\fg$. Then there exists a type $(G,P)$ with $s_{(-,\dots,-)}\in Z(G_0)$.
\end{Proposition}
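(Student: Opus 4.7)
My approach is to apply part (2) of Proposition \ref{lambda-action} to the explicit candidate functional $\lambda_J := \pi i \cdot ht_{\Xi}$, where $ht_{\Xi}$ is the height functional on the weight space of $\fg$ defined in the ``Structure of the type $(G,P)$'' paragraph of Section 1. All the work then reduces to verifying the three conditions (a)--(c) on $\lambda_J$ and reading off that the resulting $s_J$ realizes the tuple $(-,\dots,-)$.

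The verification of (a)--(c) is essentially bookkeeping. Condition (a) is immediate since, by definition of the grading, $ht_{\Xi}(\alpha)=0$ whenever $\fg_\alpha\subset\fg_0$. Condition (b) holds because $\lambda_J(\alpha)=\pi i\cdot ht_{\Xi}(\alpha)\in\pi i\mathbb{Z}$ is congruent to $0\in\mathbb{R}$ modulo $\pi i$. For condition (c), I will invoke the $\sigma^*$-stability of $\Xi$ (recalled as a Fact in Section 1), which forces $ht_{\Xi}\circ\sigma^*=ht_{\Xi}$, so that
\[
\lambda_J(\sigma^*\alpha)-\overline{\lambda_J(\alpha)}=\pi i\cdot ht_{\Xi}(\alpha)+\pi i\cdot ht_{\Xi}(\alpha)=2\pi i\cdot ht_{\Xi}(\alpha)\in 2\pi i\mathbb{Z};
\]
this is the correct reading of the complex conjugation condition, since $\lambda_J(\alpha)$ is only well-defined modulo $2\pi i\mathbb{Z}$ via the exponential identity $\Ad_{s_J}(X^\alpha)=e^{\lambda_J(\alpha)}X^\alpha$ that characterises it.

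Once (a)--(c) are verified, Proposition \ref{lambda-action}(2) produces a pair $(G,P)$ satisfying all the standing assumptions of the paper together with a distinguished $s_J\in Z(G_0)$ for which $\Ad_{s_J}(X^\alpha)=e^{\lambda_J(\alpha)}X^\alpha$ on every restricted root space. To finish I only need to observe that for each simple restricted root $\alpha_{i_\ell}\in\Xi$,
\[
\Ad_{s_J}(X^{-\alpha_{i_\ell}})=e^{-\pi i}X^{-\alpha_{i_\ell}}=-X^{-\alpha_{i_\ell}},
\]
so that $J=\Ad_{s_J}|_{\fg_{-1}}$ acts by $-\id$ on the root space of every element of $\Xi$, which is precisely the data encoded by the tuple $(-,\dots,-)$ in the paragraph preceding the statement. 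I do not expect any serious obstacle here: Proposition \ref{lambda-action}(2) (together with Corollary \ref{cor1.4}) does all the heavy lifting, and the only point requiring a moment of thought is the mod-$2\pi i$ interpretation of condition (c), after which the argument is a short and essentially formal verification.
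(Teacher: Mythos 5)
Your proposal is correct and follows exactly the route the paper intends: the Proposition is stated as an immediate application of Proposition \ref{lambda-action}(2), and your choice $\lambda_J=\pi i\cdot ht_{\Xi}$ (with the conditions (a)--(c) checked modulo $2\pi i$, which is the right reading since $\lambda_J$ only enters through $e^{\lambda_J(\alpha)}$, here $\pm1$ and in particular real) is precisely the functional that realizes $s_{(-,\dots,-)}$.
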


We can describe the eigenspaces of $\Ad_{s_{(-,\dots,-)}}$ in a fairly simple way as follows.

\begin{Proposition}\label{5.2}
Let $(\ba\to M, \om)$ be a parabolic geometry of type $(G,P)$ such that $s_{(-,\dots,-)}\in Z(G_0)$. Then the following facts hold:

\begin{enumerate} 
\item The $-1$--eigenspace of $\Ad_{s_J}$ in $\fg$ equals to $\sum_{i\ {\rm odd}} \fg_i$, and the $1$--eigenspace of $\Ad_{s_J}$ in $\fg$ equals to $\sum_{i\ {\rm even}} \fg_i$.
\item $$\fp^{s_{(-,\dots,-)}}_{fix}=\sum_{i>0 \  {\rm even}}\fg_i.$$
\item A usual symmetry is $s_{(-,\dots,-)}$--symmetry if and only if the symmetry is involutive. In particular, $$\Sigma(s_{(-,\dots,-)})=\{s_{(-,\dots,-)}\exp(Z) :  Z\in \sum_{i>0 \  {\rm odd}}\fg_i\}.$$
\end{enumerate} 
\end{Proposition}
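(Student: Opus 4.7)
My plan is to treat the three claims in turn, with (1) and (2) reducing to bookkeeping with the eigenspace decomposition from Proposition \ref{lambda-action}, and (3) hinging on the observation that $s_J^2=e$.

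For (1), I will invoke Proposition \ref{lambda-action}(1) with $J=(-,\ldots,-)$. The functional $\lambda_J$ is pinned down on simple restricted roots: $\lambda_J(\alpha_j)\equiv \pi i\pmod{2\pi i}$ for $\alpha_j\in\Xi$, and $\lambda_J(\alpha_j)=0$ for $\alpha_j\notin\Xi$ (the latter forced by condition (a), since such $\fg_{\alpha_j}$ lies in $\fg_0$). Linearity then gives $\lambda_J(\alpha)\equiv\pi i\cdot ht_{\Xi}(\alpha)\pmod{2\pi i}$ for every restricted root $\alpha$, so $\Ad_{s_J}$ acts on $\fg_\alpha$ by $(-1)^{ht_\Xi(\alpha)}$. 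Grouping the root spaces by height yields (1), and intersecting the $+1$--eigenspace with $\fp_+=\sum_{i>0}\fg_i$ gives (2).

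For (3), the first key step is $s_J^2=e$: since $\Ad_{s_J}^2=\id$ on $\fg$ by (1), $s_J^2$ lies in $Z(G)\cap P$, which is a normal subgroup of $G$ contained in $P$ and hence trivial by effectivity. In particular, $s_J$ is involutive, so every $P$--conjugate of $s_J$ inherits its order; this settles the ``$s_J$--symmetry $\Rightarrow$ involutive'' direction. For the converse, take a usual symmetry $\phi=s_J\exp(Z)\in\Sigma_J$ with $Z\in\fp_+$. Using $s_J^2=e$ and thus $\Ad_{s_J^{-1}}=\Ad_{s_J}$, a direct manipulation gives
$$\phi^2=\exp(\Ad_{s_J}(Z))\exp(Z),$$
and, since $\exp$ is injective on the nilpotent $\fp_+$, this equals $e$ exactly when $\Ad_{s_J}(Z)=-Z$; by (1), this forces $Z\in\sum_{i>0 \ {\rm odd}}\fg_i$.

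To conclude that each such involutive element is actually a $P$--conjugate of $s_J$, I will apply Lemma \ref{uscor}, which writes $\Sigma_J$ as $\Sigma(s_J)\cup\bigcup_i\Sigma(s_J\exp(Z_i))$ with $Z_i\in\fp_{fix}^{s_J}$, i.e., $Z_i$ in the even part by (2). If an involutive element were conjugate to some $s_J\exp(Z_i)$ with $Z_i\neq 0$, then $s_J\exp(Z_i)$ would itself be involutive; but the same computation gives $(s_J\exp(Z_i))^2=\exp(2Z_i)\neq e$ by injectivity of $\exp$ on $\fp_+$. Hence every involutive usual symmetry lies in $\Sigma(s_J)$, which simultaneously proves the equivalence and yields the explicit parametrization of $\Sigma(s_{(-,\ldots,-)})$. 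The only technically delicate point I anticipate is the identity $s_J^2=e$, which requires unpacking the standing ``effectivity'' hypothesis into $Z(G)\cap P=\{e\}$ via the normality of $Z(G)\cap P$ in $G$; once this is in hand, the remainder of the proof is a transparent consequence of (1) combined with $\exp(Z)s_J=s_J\exp(\Ad_{s_J}(Z))$ and Lemma \ref{uscor}.
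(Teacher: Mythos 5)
Your proposal is correct, and for parts (1)--(2) as well as the key computation in (3) --- that $(s_J\exp(Z))^2=\id$ forces $\Ad_{s_J}(Z)=-Z$, hence $Z\in\sum_{i>0\ {\rm odd}}\fg_i$ --- it coincides with the paper's argument. You diverge only in the last step, showing that such an $s_J\exp(Z)$ really lies in $\Sigma(s_J)$: the paper does this constructively, exhibiting the conjugator via the identity
$$s_J\exp(Z)=\exp\bigl(-\tfrac{1}{2}Z\bigr)\,s_J\,\exp\bigl(\tfrac{1}{2}Z\bigr),$$
a one-line check using $\Ad_{s_J}\bigl(\tfrac{1}{2}Z\bigr)=-\tfrac{1}{2}Z$, whereas you argue indirectly through Lemma \ref{uscor}: every element of $\Sigma_J$ is conjugate into $s_J\exp(\fp_{fix}^{s_J})$, and a representative $s_J\exp(Z_i)$ with $0\neq Z_i\in\fp_{fix}^{s_J}$ cannot be involutive since $(s_J\exp(Z_i))^2=\exp(2Z_i)\neq e$. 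Both routes are valid; the paper's explicit conjugation is shorter and hands you the conjugating element, while your route has the merit of making explicit the fact $s_J^2=e$ (via effectivity, equivalently via the uniqueness statement of Proposition \ref{2.1}), which the paper leaves implicit in the remark that $p^{-1}s_Jp$ is involutive, and of showing how the conjugacy-class decomposition of Lemma \ref{uscor} alone already pins down $\Sigma(s_J)$ among the classes in $\Sigma_J$.
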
 
\begin{proof}
The first claim follows from a simple computation with $\lambda_{(-,\dots,-)}$, and then the second claim follows, too. 

If $s_{(-,\dots,-)}\exp(Z)s_{(-,\dots,-)}\exp(Z)=\id$ for $Z\in \fp_+$, then $\Ad_{s_{(-,\dots,-)}}(Z)=-Z$, i.e., $Z\in \sum_{i>0\  {\rm odd}}\fg_i$. Thus $$s_{(-,\dots,-)}\exp(Z)=\exp(-\frac12 Z)s_{(-,\dots,-)}\exp(\frac12 Z)\in \Sigma(s_{(-,\dots,-)})$$ is $s_{(-,\dots,-)}$--symmetry. Conversely, it is clear that $p^{-1}s_{(-,\dots,-)}p\in \Sigma(s_{(-,\dots,-)})$ is involutive. In particular, the description of $\Sigma(s_{(-,\dots,-)})$ follows.
\end{proof}

Thus the Theorem \ref{homsym-central} can be rephrased due to the previous results as follows. 

\begin{Theorem}\label{homsym-usual}
Let $(\ba \rightarrow M,\om)$ be a homogeneous parabolic geometry of type $(G,P)$ such that $s_J\in Z(G_0)$ for $J=(-,\dots,-)$. Then there is $u_0\in \ba$ such that the following facts are equivalent:

\begin{enumerate}
\item There is a usual symmetry at one (and thus at each) point of $M$.
\item There is an involutive usual symmetry at one (and thus at each) point of $M$.
\end{enumerate}
Moreover, there is $u_0\in \ba$ such that the following claims are equivalent:
\begin{enumerate}
\item $\sinf_J\neq\emptyset$.
\item there is local usual symmetry preserving $\frak{inf}(u_0)$.
\item The space $\frak{inf}(u_0)$ splits into $\sum_{i\ {\rm odd}} \fg_i$ and $\sum_{i\ {\rm even}} \fg_i$, and the non--zero components of the harmonic curvature have even homogeneity.
\end{enumerate}
\end{Theorem}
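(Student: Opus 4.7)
The plan is to specialize Theorems \ref{homsym-central} and \ref{2.5} to $J=(-,\dots,-)$ using Proposition \ref{5.2}. A preliminary observation is that $s_J$ itself is involutive: by Proposition \ref{lambda-action}(1) and Proposition \ref{5.2}(1), every eigenvalue of $\Ad_{s_J}$ on $\fg$ lies in $\{\pm 1\}$, so $\Ad_{s_J^2}=\id$ on $\fg$; since the center of $G$ is a normal subgroup of $G$ contained in $P$, effectivity forces it to be trivial and hence $s_J^2=e$.

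For the first equivalence, $(2)\Rightarrow(1)$ is trivial. For $(1)\Rightarrow(2)$: if a usual symmetry exists at one (hence every) point, then by Proposition \ref{canonical-iso-homog} the equivalence of the first part of Theorem \ref{homsym-central} applies at the $u_0$ it provides, so $s_J\in A_{u_0}$. The preliminary observation makes $s_J$ itself an involutive usual symmetry at $\pi(u_0)$, and by Proposition \ref{canonical-iso-homog} its conjugates produce involutive usual symmetries at every point. (One may alternatively invoke Proposition \ref{5.2}(3), which asserts that every element of $\Sigma(s_J)$ is involutive.)

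For the second equivalence, $(1)\Leftrightarrow(2)$ is immediate from the definition of $\sinf_J$ in Theorem \ref{lemma3.10}, whose elements are by construction the local usual symmetries preserving $\frak{inf}(u_0)$. For $(1)\Leftrightarrow(3)$, apply the second equivalence of Theorem \ref{homsym-central} and rewrite each of its two conditions via Proposition \ref{5.2}(1): the eigenspaces of $\Ad_{s_J}$ in $\fg$ are exactly $\sum_{i\ {\rm odd}}\fg_i$ and $\sum_{i\ {\rm even}}\fg_i$; and since $\Ad_{s_J}$ acts on $\fg_k$ by $(-1)^k$, we have $\lambda_J(\alpha)\equiv i\pi\cdot ht_\Xi(\alpha) \mod 2\pi i$ on each restricted root, so $\lambda_J(\mu)\not\equiv 0 \mod 2\pi i$ is the condition that $ht_\Xi(\mu)$ is odd. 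Via the labeling $X^{-\mu}=X^{\alpha_i}\wedge X^{s_{\alpha_i}(\alpha_j)}\otimes X^{s_{\alpha_i}s_{\alpha_j}(-\mu^\fg)}$, the quantity $-ht_\Xi(\mu)$ is the homogeneity of the corresponding cochain, so this parity condition is the same as odd homogeneity. The only actual calculation is this parity translation, and it is immediate from Proposition \ref{5.2}(1); there is no genuine obstacle.
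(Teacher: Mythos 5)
Your proposal is correct and takes essentially the same route as the paper, which offers no separate argument but presents the theorem as a direct rephrasing of Theorem \ref{homsym-central} via Proposition \ref{5.2}; your specialization of the eigenspace condition and the translation of $\lambda_J(\mu)\neq 0 \mod 2\pi i$ into odd homogeneity is exactly the intended content. Your extra observation that $s_{(-,\dots,-)}$ is itself involutive (by effectivity, as in the proof of Proposition \ref{2.1}) is the same fact the paper encodes in Proposition \ref{5.2}(3), so nothing is missing.
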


In particular, there is the following consequence of the condition on the harmonic curvature.
\begin{Corollary}
If a homogeneous parabolic geometry does not admit a harmonic curvature of even homogeneity, and there is a usual symmetry at one point of $M$, then the whole curvature vanishes.

The classification of parabolic geometries with $\fg$ simple that admit a harmonic curvature of even homogeneity can be found in the tables in the last Section.
\end{Corollary}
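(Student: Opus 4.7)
The plan is to reduce the Corollary to the third equivalence in Theorem \ref{homsym-usual} combined with the identity $\kappa=L(\kappa_H)$ recalled in the preliminaries.

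First I would translate the hypothesis into the infinitesimal language. A usual symmetry at a point $x=\pi(u_0)\in M$ amounts to $A_{u_0}\cap \Sigma_{(-,\dots,-)}\neq \emptyset$; by Lemma \ref{lemma3.10} this set sits inside $\sinf_{(-,\dots,-)}$, so in particular $\sinf_{(-,\dots,-)}\neq \emptyset$. The second chain of equivalences in Theorem \ref{homsym-usual} then forces every non--zero isotypical component of $\kappa_H(u_0)$ to have even homogeneity.

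Next I would invoke the hypothesis of the Corollary: the type $(G,P)$ in question admits no component of $H^2(\fg_-,\fg)$ of even homogeneity. Consequently $\kappa_H(u_0)$ has no non--zero component, i.e.\ $\kappa_H(u_0)=0$. The vanishing at a single frame then propagates to all of $\ba$, because $\kappa_H$ is $P$--equivariant along fibers and constant along the action of $\Aut(\ba,\om)$, which acts transitively on $M$. Hence $\kappa_H\equiv 0$ on $\ba$.

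Finally, since the geometry is assumed regular and normal, the full curvature is recovered from the harmonic one by the BGG splitting operator, as recalled in the preliminaries: $\kappa=L(\kappa_H)$. Therefore $\kappa=L(0)=0$ and the whole curvature vanishes. The chain of implications is essentially automatic from the results already established; the only point requiring a small honest check is the propagation of the vanishing of $\kappa_H(u_0)$ to all of $\ba$, but this is immediate from $P$--equivariance of $\kappa_H$ combined with the transitivity of $\Aut(\ba,\om)$ on $M$.
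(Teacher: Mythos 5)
Your proposal is correct and follows essentially the route the paper intends: the Corollary is presented as an immediate consequence of Theorem \ref{homsym-usual} (usual symmetry $\Rightarrow$ non--zero components of $\kappa_H$ have even homogeneity, hence $\kappa_H=0$ under the hypothesis) combined with $\kappa=L(\kappa_H)$ for regular normal geometries. Your extra remarks on propagating $\kappa_H(u_0)=0$ over $\ba$ via equivariance and homogeneity, and on passing through Theorem \ref{lemma3.10} to get $\sinf_{(-,\dots,-)}\neq\emptyset$, just make explicit steps the paper leaves implicit.
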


Let us point out some important consequences of the classification in the tables in the last Section.

\begin{Proposition} \label{4.5}
The following facts hold for homogeneous parabolic geometries:

\begin{enumerate}
\item There is more than one usual symmetry if and only if there is more than one involutive usual symmetry.

\item If there is no simple restricted root $\gamma$ of positive height such that $\langle \gamma,\tilde \mu_i \rangle=0$ for $\tilde \mu_i$ in $H^2(\fp_+,\fg)$ corresponding to non--trivial components of $\kappa_H(u_0)$, then the non--flat homogeneous parabolic geometry has at most one (involutive) usual symmetry at each point.

\item If $|k|\leq 2$ and $\fg$ is simple, then the non--flat homogeneous parabolic geometry has at most one (involutive) usual symmetry at each point.
\end{enumerate}
\end{Proposition}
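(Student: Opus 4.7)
The proposition collects three assertions whose proofs all reduce to the general theory built up above plus the case analysis in the classification tables of the final section. Claim (2) is a direct corollary of Corollary~\ref{4.neco} applied to $J=(-,\dots,-)$: its hypothesis ``$\lambda_J(\gamma)=0 \mod 2\pi i$ for every simple restricted root $\gamma$ of positive height satisfying (3) of Theorem~\ref{curvrest}'' is vacuous precisely when there are no such $\gamma$, which is the present hypothesis, so Corollary~\ref{4.neco} delivers at most one $s_J$-symmetry, i.e., at most one usual symmetry, at each point.

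For claim (1), I would use Theorem~\ref{2.5} to parametrize the usual symmetries at a convenient $u_0$ by the set $\{s_J\exp Z : Z\in(\fa_{u_0})_+\}$, and Proposition~\ref{5.2}(3) to identify the involutive ones with the subset $Z\in \sum_{i>0\ \text{odd}}\fg_i\cap(\fa_{u_0})_+$. Since $s_J\in A_{u_0}$, the subspace $(\fa_{u_0})_+\subset\fp_+$ is $\Ad_{s_J}$-stable, and because $\Ad_{s_J}$ is an involution it splits as $(\fa_{u_0})_+=(\fa_{u_0})_+^{+1}\oplus(\fa_{u_0})_+^{-1}$, with $(\fa_{u_0})_+^{-1}$ exactly the space of involutive contributions. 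The claim therefore reduces to the implication $(\fa_{u_0})_+^{+1}\neq 0\Rightarrow (\fa_{u_0})_+^{-1}\neq 0$, which I would establish via Theorem~\ref{curvrest}(4): the eigenvalues of $\Ad_{s_J}$ appearing on $(\fa_{u_0})_+$ come from restricted roots $\alpha$ spanned by the simple restricted roots of positive height satisfying (3) of Theorem~\ref{curvrest}, and one then confirms type-by-type in the tables that whenever such spanning roots produce a $+1$-eigenvalue (which already requires $|k|\ge 2$ together with an even count of $\Xi$-contributions in $\alpha$) there is also a companion positive root with an odd count producing a $-1$-eigenvalue in $(\fa_{u_0})_+$.

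Claim (3) I would reduce to claim (2): under $|k|\le 2$ with $\fg$ simple, inspection of the classification tables shows that no simple restricted root of positive height satisfies the orthogonality condition of Theorem~\ref{curvrest}(3) on any admissible non-trivial harmonic curvature component, so (2) applies and gives the conclusion.

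The principal obstacle is the combinatorial bookkeeping required for (1) and (3): both amount to a careful scan through the relevant rows of the tables, and (1) additionally forces one to couple the $+1$- and $-1$-eigenspaces of $\Ad_{s_J}$ on $(\fa_{u_0})_+$ by matching, within the set of roots $\alpha$ allowed by Theorem~\ref{curvrest}(4), the even-parity contributions with odd-parity ones for each entry.
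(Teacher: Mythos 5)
Your treatment of claim (2) is essentially fine (Corollary~\ref{4.neco} with a vacuous hypothesis, or directly Theorem~\ref{curvrest}(4), which even gives $(\fa_{u_0})_+=0$ and hence at most one usual symmetry, not just at most one involutive one — your identification ``at most one $s_J$--symmetry, i.e., at most one usual symmetry'' silently uses claim (1)). The real problem is claim (1). Theorem~\ref{curvrest}(4) only gives \emph{necessary} conditions: it restricts which eigenvalues of $\Ad_{s_J}$ \emph{may} occur on $(\fa_{u_0})_+$. Exhibiting a ``companion'' root of odd $\Xi$--height therefore shows at best that a $-1$--eigenvalue is not forbidden; it can never certify that $(\fa_{u_0})_+^{-1}\neq 0$, which is exactly the implication you need. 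Moreover claim (1) is stated for \emph{all} homogeneous parabolic geometries — semisimple $\fg$, no non--flatness assumption — whereas the tables only cover $\fg$ simple, so a table scan cannot prove it. The paper's argument is structural, not combinatorial: by Proposition~\ref{5.2}(3) uniqueness of the involutive symmetry forces $(\fa_{u_0})_+\subset \fp^{s_{(-,\dots,-)}}_{fix}=\sum_{i>0\ \mathrm{even}}\fg_i$, and a nonzero $(\fa_{u_0})_+$ cannot lie in the even positive part (if $Z\in(\fa_{u_0})_+$ has purely even components with lowest degree $j\geq 2$, bracketing with elements of $\infi(u_0)$ representing $\fg_{-1}$ — which exist by transitivity, and stay inside $\infi(u_0)$ since $\kappa(u_0)(Z,\cdot)=0$ for $Z\in\fp$ — produces elements of $(\fa_{u_0})_+$ with nonzero lowest component in odd degree $j-1$, and the $\Ad_{s_J}$--stability of $(\fa_{u_0})_+$ then yields a nonzero odd part). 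This production of new elements of $(\fa_{u_0})_+$ is the missing idea in your proposal.

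Claim (3) also does not reduce to (2) in the way you state: the assertion that for $|k|\leq 2$, $\fg$ simple, no simple restricted root of positive height satisfies the orthogonality condition of Theorem~\ref{curvrest}(3) is false. For example $\fg=\frak{sl}(n+1,\mathbb{R})$, $\Xi=\{1,2\}$ has $|k|=2$, and for the harmonic curvature component $(2,1)$ the root $\gamma=\alpha_1$ satisfies the condition (Table~\ref{tab3}). What saves the statement there is the $J$ column, not the $\gamma$ column: for that component $s_{(-,-)}$ does not act trivially, so a non--flat geometry with this curvature admits no usual symmetry at all; and in the $|k|\leq 2$ rows where $(-,\dots,-)$ \emph{is} admissible the $\gamma$ column is empty, so Theorem~\ref{curvrest}(4) gives $(\fa_{u_0})_+=0$. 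So the table argument for (3) must couple admissibility of $J=(-,\dots,-)$ (even homogeneity, Theorem~\ref{homsym-usual}) with the list of roots $\gamma$; invoking (2) alone, as you do, leaves the cases with a surviving $\gamma$ unhandled.
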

\begin{proof}
It follows from the Proposition \ref{5.2} that there is only one involutive usual symmetry if and only if $(\fa_{u_0})_+\subset \fp^{s_{(-,\dots,-)}}_{fix}=\sum_{i>0\ {\rm even}} \fg_i$, but this is not possible for obvious reasons. Thus the claim (1) follows.

The claim (2) follows from the Theorem \ref{curvrest}. The claim (3) then follows from the tables in the last Section.
\end{proof}

In particular, this Proposition extends the known restrictions on the number of usual symmetries to all AHS--structures and parabolic contact structures.

The homogeneous parabolic geometries with a unique usual symmetry at each point can be immediately related to the classical symmetric and reflexion spaces, see \cite{L3}.

\begin{Theorem}
Let $(\ba\to M,\om)$ be a homogeneous parabolic geometry with a single (involutive) usual symmetry at one (and thus at each) point of $M$. Then $M$ with these symmetries has a structure of a homogeneous reflexion space, i.e., it is a correspondence space to a symmetric space.
\end{Theorem}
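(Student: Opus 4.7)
My plan is to assemble the unique involutive symmetries into a smooth reflexion-space multiplication on $M$ in the sense of Loos, verify the three axioms, and then invoke Loos's structure theorem to conclude that $M$ is a correspondence space over a symmetric space.

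First, by the hypothesis together with Proposition \ref{4.5}(1), the usual symmetry at every $x \in M$ is unique, and by Theorem \ref{homsym-usual} it must then be involutive. Denote it by $s_x$ and define $\mu : M \times M \to M$ by $\mu(x,y) := s_x(y)$. For any $\phi \in \Aut(\ba,\om)$, the conjugate $\phi \circ s_{x_0} \circ \phi^{-1}$ is again an involutive usual symmetry (the type $\Sigma(s_{(-,\dots,-)})$ is stable under conjugation by Definition \ref{symm-flat}, and involutivity is preserved by conjugation) and it fixes $\phi(x_0)$. By uniqueness,
\[
s_{\phi(x_0)} = \phi \circ s_{x_0} \circ \phi^{-1}.
\]
Because $\Aut(\ba,\om)$ acts smoothly and transitively on $M$, the orbit map $\phi \mapsto \phi(x_0)$ admits smooth local sections, so this identity forces $x \mapsto s_x$ to be smooth, and hence $\mu$ is smooth in both arguments.

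Second, I verify Loos's three reflexion-space axioms. The identity $\mu(x,x) = s_x(x) = x$ holds because $x$ is fixed by $s_x$; $\mu(x,\mu(x,y)) = s_x^2(y) = y$ follows from involutivity; and $\mu(x, \mu(y,z)) = \mu(\mu(x,y), \mu(x,z))$ rearranges to $s_x \circ s_y \circ s_x^{-1} = s_{s_x(y)}$, which is exactly the conjugation identity applied to $\phi = s_x$. Combined with the transitive action of $\Aut(\ba,\om)$, this makes $(M,\mu)$ a homogeneous reflexion space in the sense of \cite{L3}.

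Third, I invoke Loos's structure theorem for homogeneous reflexion spaces: every such space is a smooth bundle over a symmetric space $N$, the fibres being the identity components of the fixed-point sets of the symmetries, and the multiplication on $N$ inherited from $\mu$ satisfies the symmetric-space axiom that the base point is an isolated fixed point of its symmetry. This displays $(M,\mu)$ as a correspondence space over the symmetric space $N$, which is the desired conclusion. The main obstacle is the smoothness of $x \mapsto s_x$: while the reflexion axioms and the structural conclusion are formal once smoothness is in hand, extracting smoothness from the conjugation identity requires genuine use of the Lie-group structure on $\Aut(\ba,\om)$ and its smooth transitive action on $M$; a secondary (but formal) point is that the fibration produced by Loos must then be checked to coincide with the distribution on $M$ corresponding to the $+1$-eigenspace of $\Ad_{s_{(-,\dots,-)}}$ on $\fg_-$, so that the identification of $M$ as a correspondence space is compatible with the underlying parabolic-geometric structure.
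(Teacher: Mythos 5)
Your proposal is correct, and its engine is the same observation the paper uses: uniqueness of the symmetry forces conjugation--invariance, $s_{\phi(x)}=\phi\circ s_x\circ\phi^{-1}$. The difference is in how that observation is cashed in. The paper works entirely at the level of the isotropy group: conjugating $s_{(-,\dots,-)}$ by any $p\in A_{u_0}$ gives another automorphism of type $\Sigma_{(-,\dots,-)}$ at the same point, so uniqueness yields $s_{(-,\dots,-)}\in Z(A_{u_0})$, and the conclusion is then delegated to \cite[Theorem 3.4.4]{G4}, which packages exactly the step ``isotropy contained in the centralizer of an involutive element $\Rightarrow$ correspondence space over a symmetric space.'' You instead bypass that citation and rebuild its content from Loos directly: you define $\mu(x,y)=s_x(y)$, obtain smoothness of $x\mapsto s_x$ from local sections of the orbit map of $\Aut(\ba,\om)$, verify the three reflexion--space axioms (the distributive axiom being precisely the conjugation identity for $\phi=s_x$), and invoke the structure theorem of \cite{L3}. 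What your route buys is self-containedness — the only external input is Loos's original theorem rather than the thesis \cite{G4} — at the cost of carrying out the smoothness and axiom checks explicitly; what the paper's route buys is brevity, since centrality of $s_{(-,\dots,-)}$ in $A_{u_0}$ is a two-line consequence of uniqueness and the quoted theorem does the rest. One cosmetic caveat: in your description of Loos's theorem the fibres of the canonical projection are orbits of the centralizer-type subgroup and sit inside the fixed-point sets of the symmetries, but they need not coincide with the identity components of those fixed-point sets; this does not affect the conclusion you actually use, namely the existence of the equivariant fibration over a symmetric space.
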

\begin{proof}
We know that $ps_{(-,\dots,-)}p^{-1}$ is an automorphisms of type $\Sigma_{(-,\dots,-)}$ for any $p\in A_{u_0}$. Then it is clear that $s_{(-,\dots,-)}\in Z(A_{u_0})$ due to the uniqueness of the symmetry and thus the result follows from \cite[Theorem 3.4.4]{G4}.
\end{proof}

\begin{Remark}
We recall that although the group generated by usual symmetries acts transitively on $M$, the whole $\Aut(\ba,\om)$ does not have to act effectively on the symmetric space from the previous Theorem.
\end{Remark}

Let us briefly discuss the semisimple case. For arbitrary semisimple Lie algebra $\fg$, we will write $\fg=\sum_{l=1}^{r} \fg^{(l)}$, where $\fg^{(l)}$ are simple factors of $\fg$, which are naturally $|k_l|$--graded. Then $\mu^\fg=\oplus_{l=1}^{r} \mu^{\fg^{(l)}}$, i.e., the component of the harmonic curvature $(i,j)$ decomposes into factors $(i,j)^l$. 
Thus the components of the following types can appear, where $1\leq l_1, l_2, l_3\leq j$ are different, $b=2\frac{\langle\alpha_j,\alpha_i\rangle}{\langle\alpha_i,\alpha_i\rangle}$ and $c$ is the  homogeneity of $X^{\alpha_i} \otimes X^{s_{\alpha_i}(-\mu^{\fg^{(l_3)}})}\in \fp_+\otimes \fg$:
\begin{center}
\begin{table}[H]
\caption{Possible types of components of the harmonic curvature 
for the semisimple Lie algebra $\fg$}
\label{tab1}
\begin{tabular}{|c|c|c|c|}
\hline
&$(i,j)^l$ & homogeneity & restrictions\\
\hline
(1)&$\fg_{\alpha_i}\subset \fg^{(l_1)}$, $\fg_{\alpha_j}\subset \fg^{(l_2)}$, $l=l_3$& $1$ & $k_{l_3}=1$\\
\hline
(2)&$\fg_{\alpha_i}\subset\fg^{(l_1)}$, $\fg_{\alpha_j}\subset \fg^{(l_1)}$, $l=l_3$& $1-b-k_{l_3}$ & $j\notin \Xi|_{\fg^{(l_1)}}$, $b\neq 0$, $k_{l_3}<1-b$\\
\hline
(3)&$\fg_{\alpha_i}\subset\fg^{(l_1)}$, $\fg_{\alpha_j}\subset \fg^{(l_1)}$, $l=l_3$& $2-b-k_{l_3}$ & $j\in \Xi|_{\fg^{(l_1)}}$, $k_{l_3}<2-b$\\
\hline
(4)&$\fg_{\alpha_i}\subset\fg^{(l_1)}$, $\fg_{\alpha_j}\subset \fg^{(l_2)}$, $l=l_1$& $1+c$ & $-1<c$\\
\hline
(5)&$\fg_{\alpha_i}\subset\fg^{(l_1)}$, $\fg_{\alpha_j}\subset \fg^{(l_1)}$, $l=l_1$&  & \\
\hline
\end{tabular}
\end{table}
\end{center}

There is the following trivial but crucial observation about usual symmetries in the semisimple case.

\begin{Lemma} \label{poznamka}
If $\fg$ is semisimple, then there generically is more than one (involutive) usual symmetry at a single point of $M$. It suffices to take a simple factor $\fg^{(l)}$ which does not contribute to the $\kappa_H$ according to the Table \ref{tab1}.
\end{Lemma}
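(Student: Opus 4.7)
The plan is to exhibit, by a product construction, homogeneous parabolic geometries in which the conclusion holds. First, I would take any homogeneous parabolic geometry of type $(G',P')$ on the simple factors of $\fg$ other than $\fg^{(l)}$ that admits a usual symmetry, and form its local product with the flat model of the parabolic geometry of type $(G'',P'')$ corresponding to $\fg^{(l)}$. On the flat factor, the entire $\fg^{(l)}$ acts by infinitesimal automorphisms, so in the resulting product geometry one has $\fg^{(l)}\subseteq \frak{inf}(u_0)$, and in particular $\fg^{(l)}\cap \fp_+\subseteq (\fa_{u_0})_+$.

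Next, combining Proposition \ref{5.2}(3) with Theorem \ref{2.5}, the set of involutive usual symmetries at $\pi(u_0)$ is in bijection with $(\fa_{u_0})_+\cap \sum_{i>0\ {\rm odd}}\fg_i$ via $Z\mapsto s_{(-,\dots,-)}\exp(Z)$. Since $(G,P)$ is effective, the parabolic $\fp$ meets every simple factor of $\fg$ in a proper subalgebra, which forces $\fg^{(l)}_1$ to be a non-zero subspace of $\fg^{(l)}\cap \fp_+$, and hence of $(\fa_{u_0})_+\cap \sum_{i>0\ {\rm odd}}\fg_i$. This immediately yields a non-trivial family of involutive usual symmetries at $\pi(u_0)$ distinct from $s_{(-,\dots,-)}$ itself, establishing the claim.

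The only step requiring any care is verifying that the product construction fits inside the parabolic geometries framework used in the paper: one needs $(G,P)$ to decompose as $(G',P')\times(G'',P'')$ compatibly with the grading, and the extension $(i,\alpha)$ of the product geometry to realise $\infi(u_0)$ as the corresponding sum of infinitesimal automorphism algebras of the factors. Both are routine, however, since a standard parabolic in a semisimple group is automatically a product of standard parabolics across the simple factors, and the extension functor behaves naturally under such products; so the $\fg^{(l)}$-direction in $(\fa_{u_0})_+$ is automatic and independent of the geometry chosen on the other factors.
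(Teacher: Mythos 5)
Your proof is correct, and it is worth saying up front that the paper gives no argument for Lemma \ref{poznamka} beyond the hint embedded in its statement: the extra symmetries are to come from directions in a simple factor that $\kappa_H$ does not see, and the word ``generically'' absorbs everything else. Your contribution is to make this precise by an explicit construction: splitting off $\fg^{(l)}$ as a flat product factor gives $\fg^{(l)}\subset\infi(u_0)$, hence $\fg^{(l)}\cap\fp_+\subset(\fa_{u_0})_+$, and Theorem \ref{2.5} combined with Proposition \ref{5.2}(3) turns the non-zero odd piece $\fg^{(l)}_1$ (non-zero because effectivity forces $\fp$ to meet every simple ideal in a proper parabolic, so each factor is non-trivially graded) into a positive-dimensional family of involutive usual symmetries at the base point; distinct elements of $A_{u_0}$ give distinct symmetries since evaluation at $u_0$ is injective. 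Two caveats: take the global product with the whole flat model $G''/P''$ rather than only a local product, so the extra symmetries are genuine automorphisms, and note that the ``routine'' step you defer --- that a product of regular normal geometries is regular and normal of the product type, with $\kappa_H$ having only components of type (5) of Table \ref{tab1} --- is precisely what guarantees the flat factor does not contribute to $\kappa_H$, i.e.\ what ties your example to the paper's hypothesis. Finally, your reading is slightly narrower than the paper's: there the non-contributing factor need be neither flat nor split off as a product, and in that generality one can only say that the restrictions of Theorem \ref{curvrest} never exclude the $\fg^{(l)}$-directions from $(\fa_{u_0})_+$ --- hence ``generically''; your product examples establish the generic/existential content, which is all the paper uses afterwards when it discards symmetries differing only on non-contributing factors.
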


So we will discuss only the usual symmetries, which are different after restricting to simple factors $\fg^{(l_i)}$ which contribute to $\kappa_H$. 
Let us present here facts about possible components of the harmonic curvature of homogeneous parabolic geometries with $\Sigma_{(-,\dots,-)}^{\infi(u_0)}\neq \emptyset$ according to the line in the Table \ref{tab1}.
\smallskip \\
(1) The component always vanishes due to its homogeneity.
\smallskip \\
(2) The component vanishes with the exception of the cases given in the following table (and their complexifications).
\begin{center}
\begin{tabular}{|c|c|c|}
\hline
$\fg^{(l_1)}$ & $\Xi|_{\fg^{(l_1)}}$ contains & restrictions \\
\hline
$\frak{g}_2(2)$&$\{1\}$ & $k_{l_3}=2$\\
$\frak{f}_4(4)$&$\{2\}$ & $k_{l_3}=1$\\
$\frak{so}(n,n+1)$&$\{n\}$ & $k_{l_3}=1$\\
$\frak{sp}(2n,\mathbb{R})$&$\{n-1\}$ & $k_{l_3}=1$\\
\hline
\end{tabular}
\end{center}
In particular, if $\Xi|_{\fg^{(l_3)}}$ contains $\ell$ such that $\langle \mu^{\fg^{(l_3)}},\alpha_\ell\rangle=0$, then non--flat homogeneous parabolic geometries of the corresponding type admit more than one usual symmetry at each point.
\smallskip \\
(3) The component vanishes with the exception of the cases given in the following table (and their complexifications).
\begin{center}
\begin{tabular}{|c|c|c|}
\hline
$\fg^{(l_1)}$ & $\Xi|_{\fg^{(l_1)}}$ contains & restrictions \\
\hline
$\frak{g}_2(2)$&$\{1,2\}$ & $k_{l_3}=1,3$\\
$\frak{f}_4(4)$&$\{2,3\}$ & $k_{l_3}=2$\\
$\frak{so}(n,n+1)$&$\{n-1,n\}$ & $k_{l_3}=2$\\
$\frak{sp}(2n,\mathbb{R})$&$\{n-1,n\}$ & $k_{l_3}=2$\\
\hline
arbitrary&$\{i,j\}$ & $b=1, k_{l_3}=1$\\
\hline
\end{tabular}
\end{center}
In particular, if $\Xi|_{\fg^{(l_3)}}$ contains $\ell$ such that $\langle \mu^{\fg^{(l_3)}},\alpha_\ell \rangle=0$, then non--flat  homogeneous parabolic geometries of the corresponding type admit more than one usual symmetry at each point.
\smallskip \\
(4) The component vanishes with the exception of the cases satisfying $c=1$, which are exactly projective and contact projective types. Then, if there is $\ell\in \Xi|_{\fg^{(l_2)}}$ such that $\sigma^*(\alpha_\ell)\neq \pm \alpha_j$ and $\langle \alpha_\ell,\alpha_j\rangle=0$, then non--flat  homogeneous parabolic geometries of the corresponding type admit more than one usual symmetry at each point.
\smallskip \\
(5) The component is an element of $H^2(\fg^{(l_1)}_-,\mu^{\fg^{(l_1)}})$, and  results from the simple case apply.

\noindent
{\bf Generalized symmetries of order $2$ and parabolic geometries with weak para--complex structures.}
Let us now consider automorphisms of type $\Sigma_J$, where $J$ is a natural weak para--complex structure on $T^{-1}_xM$, i.e., $J^2=\id$ holds. We will show that there can be many different para--complex structures on $T^{-1}_xM$, and it is convenient to denote $J$ as tuple with entries $+$ or $-$ that indicate, whether the action of $J$ is
$\id$ or $-\id$ on root spaces of each simple restricted root in $\Xi$.
The possible para--complex structures $J$ for a given parabolic geometry are determined by the Proposition \ref{lambda-action}.

\begin{Proposition}
Let $\fg$ be a semisimple Lie algebra, and let $\fp$ be a parabolic subalgebra of $\fg$ corresponding to $\Xi=\{i_1,\dots, i_j\}$, which does not contain any simple ideal of $\fg$. Then there exists a type $(G,P)$ with $s_J\in Z(G_0)$ such that the corresponding $J$ is a weak almost para--complex structure on $T^{-1}M$ if and only if $j>2$, or $j=2$ and $\sigma^*(\alpha_{i_1})\neq\alpha_{i_2}$. 

All possible para--complex structures $J$ correspond to decompositions of $\Xi$ into $\Xi^-=\{i_\ell\in \Xi: J_\ell=-\}$ and $\Xi^+=\{i_\ell\in \Xi: J_\ell=+\}$, which are stable under the complex conjugation $\sigma^*$.
\end{Proposition}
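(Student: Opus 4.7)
The plan is to translate the existence of $s_J$ with a prescribed action on $\fg_{-1}$ into a question about the functional $\lambda_J$ from Proposition \ref{lambda-action}. By part (2) of that proposition, specifying $s_J\in Z(G_0)$ inside a suitable $(G,P)$ is equivalent to specifying a functional $\lambda_J$ on the weight space of $\fg$ satisfying (a)--(c). For the associated $J=\Ad_{s_J}|_{\fg_{-1}}$, each root space $\fg_\alpha\subset\fg_{-1}$ has $\alpha=-\alpha_{i_\ell}+\sum_{k\notin\Xi}c_k\alpha_k$ for a unique $\alpha_{i_\ell}\in\Xi$, so condition (a) forces $\lambda_J(\alpha)=-\lambda_J(\alpha_{i_\ell})$, and $J$ therefore acts on every root space in $\fg_{-1}$ coming from the $\ell$-th slot of $\Xi$ by the single scalar $e^{-\lambda_J(\alpha_{i_\ell})}$. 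The condition $J^2=\id$ is thus equivalent to $\lambda_J(\alpha_{i_\ell})\in\pi i\,\mathbb{Z}\pmod{2\pi i}$ for every $\ell$, which assigns a sign $J_\ell\in\{+,-\}$ to each simple root in $\Xi$ according to $J_\ell=+$ iff $\lambda_J(\alpha_{i_\ell})\equiv 0$ and $J_\ell=-$ iff $\lambda_J(\alpha_{i_\ell})\equiv\pi i$.

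Demanding that $J$ be a genuine weak para--complex structure (i.e.\ $J\neq\pm\id$) means both $\Xi^+:=\{i_\ell:J_\ell=+\}$ and $\Xi^-:=\{i_\ell:J_\ell=-\}$ are non--empty. The only further constraint comes from condition (c): since the two residues $0$ and $\pi i$ in $\pi i\,\mathbb{Z}/2\pi i\,\mathbb{Z}$ are each their own complex conjugate modulo $2\pi i$, the identity $\lambda_J(\sigma^*\alpha_{i_\ell})=\overline{\lambda_J(\alpha_{i_\ell})}$ reduces to the combinatorial condition $J_{\sigma^*(\ell)}=J_\ell$, while condition (b) is automatic since $0,\pi i\in\mathbb{R}\pmod{\pi i}$. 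Hence admissible choices of the weak para--complex $J$ are in bijection with decompositions $\Xi=\Xi^+\sqcup\Xi^-$ with both parts non--empty and stable under $\sigma^*$, which already proves the second claim of the proposition.

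For the existence statement, the remaining step is the elementary combinatorics of when a non--trivial $\sigma^*$--stable partition of $\Xi$ exists. Since $\Xi$ is $\sigma^*$--stable and $\sigma^*$ is an involution, its orbits on $\Xi$ have size $1$ or $2$, and a non--trivial $\sigma^*$--stable bipartition exists iff $\Xi$ consists of at least two $\sigma^*$--orbits. This fails precisely when $j=1$ (a single orbit of size one) or when $j=2$ with $\sigma^*(\alpha_{i_1})=\alpha_{i_2}$ (a single orbit of size two), yielding exactly the condition in the statement. Conversely, given any $\sigma^*$--stable bipartition, I define $\lambda_J(\alpha_{i_\ell})=0$ for $i_\ell\in\Xi^+$, $\lambda_J(\alpha_{i_\ell})=\pi i$ for $i_\ell\in\Xi^-$, and $\lambda_J(\alpha_k)=0$ for $\alpha_k\notin\Xi$, and extend linearly; the $\sigma^*$--stability of the partition together with the choice of values in $\{0,\pi i\}$ makes the verification of (a)--(c) immediate, and Proposition \ref{lambda-action}(2) then produces the required pair $(G,P)$ with $s_J\in Z(G_0)$. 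The only point demanding any care is this routine verification of (c) across $\sigma^*$--orbits; there is no substantial obstacle beyond the bookkeeping described above.
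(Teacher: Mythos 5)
Your argument is correct and follows exactly the route the paper intends: the paper offers no separate proof, stating only that the possible para--complex structures are determined by Proposition \ref{lambda-action}, and your proposal simply fills in the details of that reduction (values of $\lambda_J$ in $\{0,\pi i\}$ on $\Xi$, vanishing outside $\Xi$, $\sigma^*$--stability from condition (c), and the orbit count giving the condition $j>2$ or $j=2$ with $\sigma^*(\alpha_{i_1})\neq\alpha_{i_2}$). The only cosmetic slip is the sign of the coefficients $c_k$ in your expression for a root of $\fg_{-1}$, which is immaterial since those terms are annihilated by condition (a).
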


The structure of eigenspaces of $\Ad_{s_J}$ is more complicated in this case. We have to consider the bi--grading $\fg_{(a,b)}\subset \fg_{a+b}$ of $\fg$ with respect to $\Xi^-$ and $\Xi^+$. Then we can characterize the $P$--conjugacy class $\Sigma(s_{J})$ as follows.

\begin{Proposition}
Let $(\ba\to M, \om)$ be a parabolic geometry of type $(G,P)$ such that $s_{J}\in Z(G_0)$ for the para--complex structure $J$. Then the following facts hold:

\begin{enumerate} 
\item The $-1$--eigenspace of $\Ad_{s_J}$ in $\fg$ is $\sum_{a\ {\rm odd}}\fg_{(a,b)}$, and the $1$--eigenspace of $\Ad_{s_J}$ in $\fg$ is $\sum_{a\ {\rm even}}\fg_{(a,b)}$.
\item $$\fp^{s_{J}}_{fix}=\sum_{a+b>0,\  a\ {\rm even}}\fg_{(a,b)}.$$
\item An automorphism of type $\Sigma_J$ is $s_{J}$--symmetry if and only if the symmetry is involutive. In particular, 
$$\Sigma(s_{J})=\{s_{J}\exp(Z) : Z\in \sum_{a+b>0,\  a\ {\rm odd}}\fg_{(a,b)}\}.$$
\end{enumerate} 
\end{Proposition}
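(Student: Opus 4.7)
The proof plan follows the template of Proposition~\ref{5.2}, but with the finer bi--grading $\fg_{(a,b)}$ induced by the decomposition $\Xi=\Xi^-\sqcup\Xi^+$ replacing the ordinary $|k|$--grading. First, by Proposition~\ref{lambda-action} the element $s_J$ is encoded by the functional $\lambda_J$, which for a weak para--complex $J$ satisfies $\lambda_J(\alpha_{i_\ell})\equiv i\pi\pmod{2\pi i}$ for $i_\ell\in\Xi^-$ and $\lambda_J\equiv 0$ on the remaining simple restricted roots. Any restricted root $\alpha\in\fg_{(a,b)}$ has first bi--degree $a$ equal to the sum of its coefficients at roots in $\Xi^-$, so $\lambda_J(\alpha)\equiv a\pi i\pmod{2\pi i}$ and $\Ad_{s_J}$ acts on $\fg_{(a,b)}$ by $(-1)^a$. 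Claim~(1) is immediate, and claim~(2) follows by intersecting with $\fp_+=\sum_{a+b>0}\fg_{(a,b)}$.

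For claim~(3), I first need $s_J^2=e$. Since $J^2=\id_{\fg^{-1}/\fp}$ by the para--complex assumption, the bijection of Proposition~\ref{2.1} sends $s_J^2$ to $\id_{\fg^{-1}/\fp}$, which in turn corresponds to the neutral element of $Z(G_0)$ by effectivity (compare the identification used in Lemma~\ref{h-plus}). Now for $s_J\exp(Z)\in\Sigma_J$ with $Z\in\fp_+$ I compute
\[
(s_J\exp Z)^2=s_J^2\,\exp(\Ad_{s_J^{-1}}Z)\exp(Z)=\exp(\Ad_{s_J}Z)\exp(Z),
\]
so involutivity is equivalent to $C(\Ad_{s_J}Z,Z)=0$ in the nilpotent Lie algebra $\fp_+$, where $C$ is the BCH operator.

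The main technical step, and what I expect to be the principal obstacle, is to deduce $\Ad_{s_J}Z=-Z$ from this equation. Decomposing $Z=Z^++Z^-$ into the $\pm 1$--eigenspaces of $\Ad_{s_J}$ and filtering by the $|k|$--grading of $\fp_+$, the lowest filtration contribution to $C(\Ad_{s_J}Z,Z)$ is the linear term $\Ad_{s_J}Z+Z=2Z^+$; vanishing of its lowest--degree component kills the lowest piece of $Z^+$, and an induction up the nilpotent filtration, absorbing each higher BCH bracket (which lives in strictly higher filtration degree), yields $Z^+=0$. By claim~(1) this means $Z\in\sum_{a+b>0,\ a\text{ odd}}\fg_{(a,b)}$. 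Conversely, on this subspace $\Ad_{s_J}Z=-Z$ and so $C(\Ad_{s_J}Z,Z)=C(-Z,Z)=0$ automatically.

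Finally I verify that such $s_J\exp(Z)$ lies in the conjugacy class $\Sigma(s_J)$: using $s_J^{-1}=s_J$ and $\Ad_{s_J}Z=-Z$ a direct computation gives
\[
\exp(-\tfrac12 Z)\,s_J\,\exp(\tfrac12 Z)=s_J\exp(\tfrac12 Z)\exp(\tfrac12 Z)=s_J\exp(Z),
\]
and conversely $(p^{-1}s_Jp)^2=p^{-1}s_J^2p=e$ shows every element of $\Sigma(s_J)$ is involutive. The only genuinely new ingredient compared with Proposition~\ref{5.2} is the filtration/BCH induction singling out the $-1$--eigenspace; everything else is a bookkeeping translation from the single grading to the bi--grading indexed by $(\Xi^-,\Xi^+)$.
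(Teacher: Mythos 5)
Your proposal is correct and follows essentially the same route as the paper: claims (1)--(2) by evaluating $\lambda_J$ on the bi--grading $\fg_{(a,b)}$, and claim (3) by showing involutivity forces $\Ad_{s_J}(Z)=-Z$ and then exhibiting $s_J\exp(Z)=\exp(-\tfrac12 Z)\,s_J\exp(\tfrac12 Z)\in\Sigma(s_J)$, with the converse that conjugates of the involutive $s_J$ are involutive. The only deviation is your BCH/filtration induction for the step $C(\Ad_{s_J}Z,Z)=0\Rightarrow\Ad_{s_J}Z=-Z$, which is heavier than needed and whose parenthetical justification (brackets living in strictly higher filtration degree) is loose as stated, though repairable; since $\exp$ is injective on the nilpotent algebra $\fp_+$, the relation $\exp(\Ad_{s_J}Z)\exp(Z)=e$ gives $\Ad_{s_J}Z=-Z$ in one line, which is what the paper implicitly uses.
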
 
\begin{proof}
The first claim follows from a simple computation with $\lambda_{J}$ on $\fg_{(a,b)}$, and then the second claim is clear. 

If $s_{J}\exp(Z)s_{J}\exp(Z)=\id$ for $Z\in \fp_+$, then $\Ad_{s_{J}}(Z)=-Z$ holds, i.e., $Z\in \sum_{a+b>0,\  a\ {\rm odd}}\fg_{(a,b)}$. Thus 
$$s_{J}\exp(Z)=\exp(-\frac12 Z)s_{J}\exp(\frac12 Z)\in \Sigma(s_{J})$$ 
is $s_{J}$--symmetry. Conversely, it is clear that $p^{-1}s_{J}p\in \Sigma(s_{J})$ is involutive. In particular, the description of $\Sigma(s_{J})$ follows.
\end{proof}

The condition (3) of Theorem \ref{homsym-central} implies the following.

\begin{Proposition}
Let $(\ba \rightarrow M,\om)$ be a homogeneous parabolic geometry of type $(G,P)$ such that $\sinf_J\neq\emptyset$ for the para--complex structure $J$. Then the bi--homogeneity $(a,b)$ of each component  $\kappa_H$ has $a$ even.

The classification of parabolic geometries with $\fg$ simple that admit non--trivial harmonic curvatures can be found in the tables in the last Section.
\end{Proposition}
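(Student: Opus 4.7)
The plan is to reduce the statement to condition (3) of Theorem \ref{homsym-central} and then compute the functional $\lambda_J$ explicitly for a para--complex structure. Since $\sinf_J\neq\emptyset$ by hypothesis, Theorem \ref{homsym-central} applied to the para--complex $J$ guarantees that every component of $\kappa_H(u_0)$ whose corresponding weight $\mu$ satisfies $\lambda_J(\mu)\neq 0\mod 2\pi i$ must vanish. So the task is purely an algebraic computation of $\lambda_J(\mu)$ in terms of the bi--homogeneity.

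First I would evaluate $\lambda_J$ on simple restricted roots. By the definition of the para--complex structure $J$ associated to the decomposition $\Xi=\Xi^-\cup\Xi^+$, the element $s_J\in Z(G_0)$ acts as $-\id$ on $\fg_{\alpha_i}$ for $i\in\Xi^-$ and as $\id$ on $\fg_{\alpha_i}$ for $i\in\Xi^+$ or $i\notin\Xi$. Using Proposition \ref{lambda-action}, this means $\lambda_J(\alpha_i)\equiv \pi i\mod 2\pi i$ if $i\in\Xi^-$, and $\lambda_J(\alpha_i)\equiv 0\mod 2\pi i$ otherwise. Extending by linearity, for an arbitrary restricted root $\alpha=\sum_i c_i\alpha_i$ one gets $\lambda_J(\alpha)\equiv \bigl(\sum_{i\in\Xi^-}c_i\bigr)\pi i\mod 2\pi i$, i.e., $\lambda_J(\alpha)\equiv a\pi i\mod 2\pi i$ where $a$ is precisely the $\Xi^-$--height of $\alpha$, meaning $\fg_\alpha\subset\fg_{(a,b)}$ for some $b$.

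Next I would apply this to the weight $\mu$ corresponding to a harmonic curvature component. Since the component is a $\fg_0$--isotypical submodule and $\fg_0\subset\fg_{(0,0)}$ (every root in $\fg_0$ has $\Xi$--height $0$, hence both $\Xi^-$-- and $\Xi^+$--heights $0$), all weight vectors in one component share the same bi--homogeneity $(a,b)$, and this bi--homogeneity agrees with that of the distinguished weight $\mu$ encoded in Proposition \ref{lambda-action}(3). Consequently $\lambda_J(\mu)\equiv a\pi i\mod 2\pi i$, which vanishes modulo $2\pi i$ precisely when $a$ is even. Taking the contrapositive inside Theorem \ref{homsym-central}(3), every non--vanishing component of $\kappa_H(u_0)$ must have $a$ even.

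There is no real obstacle here beyond matching conventions: one must check that the bi--grading of $\fg$ induced by $\Xi^-,\Xi^+$ really is compatible with the decomposition into $\fg_0$--isotypes of $H^2(\fg_-,\fg)$ (so that ``bi--homogeneity of a component'' is well defined), and that the formula for $\lambda_J$ on the representative weight $\mu$ from Proposition \ref{lambda-action}(3) gives the same $a\pi i$ obtained by summing the contributions along $\alpha_i$, $s_{\alpha_i}(\alpha_j)$ and $s_{\alpha_i}s_{\alpha_j}(\mu^\fg)$; both follow from the fact that $\fg_0$ acts trivially in the bi--grading, so the sign $(-1)^a$ is a bi--grading invariant of the component, independent of the choice of weight representative.
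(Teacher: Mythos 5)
Your proposal is correct and takes essentially the same route as the paper: the paper treats this Proposition as an immediate consequence of condition (3) of Theorem \ref{homsym-central}, combined with the fact (from the preceding eigenspace Proposition, proved by "a simple computation with $\lambda_J$ on $\fg_{(a,b)}$") that $s_J$ acts on a component of bi-homogeneity $(a,b)$ by $(-1)^a$, i.e. $\lambda_J(\mu)= a\pi i \mod 2\pi i$, which is exactly the computation you perform. Your additional check that each harmonic-curvature component lies in a single bi-degree is the same fact the paper uses implicitly, so there is no substantive difference.
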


The statement analogous to the Proposition \ref{4.5} is more complicated, because we have to distinguish between the number of automorphisms of type $\Sigma_J$ and the number of $s_J$--symmetries.

\begin{Proposition}
The following facts hold for non--flat homogeneous parabolic geometries:

\begin{enumerate}
\item If there is no simple restricted root $\gamma$ of positive height such that $\langle \gamma,\tilde \mu_i \rangle=0$ for $\tilde \mu_i$ in $H^2(\fp_+,\fg)$ corresponding to non--trivial components of $\kappa_H(u_0)$, then there is at most one automorphism of type $\Sigma_J$ at each point.

\item If there is no simple restricted root $\gamma$ such that $\fg_{\gamma}\in \fg_{(1,0)}$ and $\langle \gamma,\tilde \mu_i \rangle=0$ for $\tilde \mu_i$ in $H^2(\fp_+,\fg)$ corresponding to non--trivial components of $\kappa_H(u_0)$, then there is at most one $s_J$--symmetry at each point.
\end{enumerate}
\end{Proposition}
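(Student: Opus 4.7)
My plan is to prove both claims by combining Theorem \ref{2.5}, which parametrizes the automorphisms of type $\Sigma_J$ by elements of $(\fa_{u_0})_+$, with the description of $\Sigma(s_J)$ from the preceding Proposition and the curvature restrictions of Theorem \ref{curvrest}.

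For claim (1), Theorem \ref{2.5} identifies the automorphisms of type $\Sigma_J$ fixing $\pi(u_0)$ with $\{s_J\exp Z:Z\in(\fa_{u_0})_+\}$, so the \emph{at most one} conclusion amounts to showing $(\fa_{u_0})_+=\{0\}$. Since the geometry is non--flat, $\kappa_H(u_0)\ne 0$, and Theorem \ref{curvrest}(3) asserts that a non--trivial $(\fa_{u_0})_+$ forces the existence of a simple restricted root $\gamma$ of positive height with $\langle\gamma,\tilde\mu_i\rangle=0$ for each $\tilde\mu_i$ corresponding to a non--trivial component of $\kappa_H(u_0)$. The hypothesis of (1) rules this out, hence $(\fa_{u_0})_+=\{0\}$ and the claim follows.

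For claim (2), by the preceding Proposition an $s_J$--symmetry in $A_{u_0}$ must be of the form $s_J\exp Z$ with $Z$ in the intersection of $(\fa_{u_0})_+$ with the $(-1)$--eigenspace $\sum_{a+b>0,\,a\ \text{odd}}\fg_{(a,b)}$ of $\Ad_{s_J}$. The main tool is Theorem \ref{curvrest}(4): every eigenvalue of $\Ad_{s_J}$ on $(\fa_{u_0})_+$ is of the form $e^{\lambda_J(\alpha)}$ for a restricted root $\alpha$ spanned by simple restricted roots $\gamma$ of positive height satisfying Theorem \ref{curvrest}(3). The hypothesis of (2) requires every such $\gamma$ to lie in $\Xi^+$, i.e., $\fg_\gamma\subset\fg_{(0,1)}$, so every admissible $\alpha$ lies in $\bigoplus_b\fg_{(0,b)}$ and hence satisfies $e^{\lambda_J(\alpha)}=1$. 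Therefore all eigenvalues of $\Ad_{s_J}$ on $(\fa_{u_0})_+$ equal $+1$, the $(-1)$--eigenspace inside $(\fa_{u_0})_+$ is trivial, and the only possible $s_J$--symmetry is $s_J$ itself.

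The principal point to check carefully is the translation between the eigenvalue restriction of Theorem \ref{curvrest}(4) and the bi--graded description of $\Sigma(s_J)$; once one observes that a restricted root built solely from simple roots in $\Xi^+$ has its first bi--grading coordinate equal to zero, the forcing $\lambda_J(\alpha)\in 2\pi i\mathbb{Z}$, and hence the triviality of the $(-1)$--eigenspace of $\Ad_{s_J}$ on $(\fa_{u_0})_+$, is immediate. Both claims then reduce to the same mechanism: a non--flat harmonic curvature obstructs, via Theorem \ref{curvrest}, the pieces of $(\fa_{u_0})_+$ needed to produce a second automorphism in the respective class.
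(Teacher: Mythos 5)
Your proof is correct and takes essentially the same route as the paper: claim (1) is the contrapositive of Theorem \ref{curvrest}(3) combined with the parametrization from Theorem \ref{2.5}, and your argument for claim (2) is exactly the content of Corollary \ref{4.neco} (which the paper invokes), unpacked via Theorem \ref{curvrest}(4) and the $(-1)$--eigenspace description of $\Sigma(s_J)$ in the bi--grading. No gaps worth noting.
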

\begin{proof}
The claim (1) follows from the Theorem \ref{curvrest}, and the claim (2) is a simple consequence of the Corollary \ref{4.neco}.
\end{proof}

 Let us point out the important consequence of the Theorem \ref{curvrest}.

\begin{Theorem}
Let $(\ba\to M,\om)$ be a homogeneous parabolic geometry with a single $s_J$--symmetry at one (and thus at each) point of $M$. Then $M$ with these $s_J$--symmetries has a structure of a homogeneous reflexion space, i.e., it is a correspondence space to a non--effective homogeneous symmetric space.
\end{Theorem}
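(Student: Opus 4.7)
The plan is to mirror the proof given earlier for the analogous statement about usual symmetries. The core observation is that the uniqueness of the $s_J$-symmetry at one point forces $s_J$ to lie in the center $Z(A_{u_0})$. Since $\Sigma(s_J)$ is by definition stable under $P$-conjugation and $A_{u_0}\subset P$ is a subgroup, for any $p\in A_{u_0}$ the conjugate $p\,s_J\,p^{-1}$ belongs to $A_{u_0}\cap\Sigma(s_J)$, that is, it represents an $s_J$-symmetry at $\pi(u_0)$. Proposition \ref{canonical-iso-homog} ensures that uniqueness at $\pi(u_0)$ is equivalent to uniqueness at every point of $M$, so the hypothesis yields $p\,s_J\,p^{-1}=s_J$ for all $p\in A_{u_0}$, and hence $s_J\in Z(A_{u_0})$.

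With $s_J$ central in $A_{u_0}$, the unique $s_J$-symmetry at $\pi(u_0)$ translates under the action of $\Aut(\ba,\om)$ to a globally well defined, smooth, equivariant field $x\mapsto \sigma_x$ of $s_J$-symmetries on $M$ that satisfies the axioms of a (homogeneous) reflexion space in the sense of Loos. At this stage \cite[Theorem 3.4.4]{G4} applies just as in the usual-symmetry case, exhibiting $M$ as the total space of a correspondence fibration $M\to N$ over a homogeneous symmetric space $N$ whose geodesic symmetries are the images of the $\sigma_x$.

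To obtain the non-effectiveness of $N$, I would argue that since $J$ is a genuine weak para-complex structure, the index set $\Xi^+$ is non-empty and $s_J$ acts as $+\id$ on the root spaces indexed by $\Xi^+$. The fibres of $M\to N$ arise precisely from integration of these fixed directions (together with their higher-filtration closure), so the induced symmetry on $N$ acts as $-\id$ on $TN$; however, the subgroup of the automorphism group moving along these fibres acts non-trivially on $M$ and trivially on $N$. Its image in the symmetry group of $N$ is therefore trivial, which is precisely the statement that the symmetric space structure on $N$ is non-effective.

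The main obstacle I foresee is verifying that \cite[Theorem 3.4.4]{G4} extends from the $J=-\id$ case to an arbitrary central $s_J\in Z(G_0)$. Concretely, one has to check that the $+1$-eigendistribution of $\Ad_{s_J}$ on $TM$ (enlarged by the filtration) is integrable, that its leaf space is Hausdorff and carries a natural manifold structure, and that the induced multiplication on $N$ indeed satisfies the axioms of a symmetric space with geodesic symmetry coming from $s_J$. The routine checks of smoothness and involutivity of $\sigma_x$ follow immediately from the centrality $s_J\in Z(A_{u_0})$ established in the first step.
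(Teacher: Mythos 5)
Your core argument is exactly the paper's: uniqueness of the $s_J$--symmetry forces $p s_J p^{-1}=s_J$ for all $p\in A_{u_0}$ (with $u_0$ chosen so that $s_J\in A_{u_0}$, as Theorem \ref{homsym-central} permits), hence $s_J\in Z(A_{u_0})$, and the reflexion space structure then comes from citing \cite[Theorem 3.4.4]{G4} verbatim, so the integrability/leaf-space checks you list as the ``main obstacle'' are not redone in the paper. The only divergence is your last paragraph: the paper does not prove non--effectiveness as a positive statement by producing a kernel from the $+1$--eigendistribution; it merely observes that, unlike the usual--symmetry case, the group generated by $s_J$--symmetries need not act transitively on $M$, so effectiveness of the action on the resulting symmetric space cannot be guaranteed --- your eigendistribution argument is therefore unnecessary for the claim as intended, and as stated it is also unjustified (the existence of a non--trivial subgroup preserving every fibre is asserted, not proved).
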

\begin{proof}
We know that $ps_{(-,\dots,-)}p^{-1}$ is an automorphisms of type $\Sigma_{(-,\dots,-)}$ for any $p\in A_{u_0}$. Then it is clear that $s_{(-,\dots,-)}\in Z(A_{u_0})$ due to the uniqueness of the $s_J$--symmetry and thus the result follows from \cite[Theorem 3.4.4]{G4}. However, since the group generated by $s_J$--symmetries does not have to act transitively on $M$, then $\Aut(\ba,\om)$ does not have to act effectively on the symmetric space.
\end{proof}

Since the $1$--eigenspace of $\Ad_{s_J}$ is too big, there are to many possible cases we have to distinguish, when $\fg$ is semisimple. So although it is possible to derive the analogous restrictions as in the case of usual symmetries, we will not discuss them here explicitly. We only recall that the Lemma \ref{poznamka} is still valid for this type of generalized symmetries.

\noindent
{\bf Generalized symmetries of order $4$ and parabolic geometries with complex structures.}
Finally, we consider automorphisms of type $\Sigma_J$, where $J$ is a natural complex structure on $T^{-1}_xM$, i.e., $J^2=-\id$ holds. We will show that there can be many different complex structures on $T^{-1}_xM$, and it is convenient to denote $J$ as tuple with entries $i$ or $-i$  that indicate, whether the action of $J$ is $i\cdot \id$ or $-i\cdot \id$ on the root space of each simple restricted root in $\Xi$. The possible complex structures $J$ for a given parabolic geometry are determined by the Proposition \ref{lambda-action} as follows.

\begin{Proposition}
Let $\fg$ be a semisimple Lie algebra, and let $\fp$ be a parabolic subalgebra of $\fg$ corresponding to $\Xi=\{i_1,\dots, i_j\}$, which does not contain any simple ideal of $\fg$. Then there exists a type $(G,P)$ with $s_J\in Z(G_0)$ such that $J$ is an almost complex structure on $T^{-1}M$ if and only if $\sigma^*(\alpha_{i_l})\neq -\alpha_{i_l}$ for $l=1, \dots, j$. 

All possible complex structures $J$ correspond to decompositions of $\Xi$ into $\Xi^-=\{i_l\in \Xi: J_l=-i\}$ and $\Xi^+=\{i_l\in \Xi: J_l=i\}$ such that $\sigma^*(\Xi^-)=\Xi^+$.
\end{Proposition}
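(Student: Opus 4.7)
My plan is to apply Proposition~\ref{lambda-action} to reduce the question to producing a functional $\lambda_J$ on the weight space with prescribed values, and then to read off the condition on $\Xi$ from conditions (b) and (c). First I would note that for $J^2=-\id$ on $T^{-1}M\cong \fg^{-1}/\fp$, since $\lambda_J$ vanishes on simple roots outside $\Xi$ by (a), the requirement reduces to $e^{-2\lambda_J(\alpha_{i_l})}=-1$ for each $l$, i.e.\ $\lambda_J(\alpha_{i_l})\equiv \pm i\pi/2\pmod{i\pi}$. The value $J_l=\pm i$ labels which sign is chosen and hence determines a partition $\Xi=\Xi^+\sqcup\Xi^-$.

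For the necessity of $\sigma^*(\alpha_{i_l})\neq -\alpha_{i_l}$, I would argue that whenever $\sigma^*(\alpha_{i_l})=-\alpha_{i_l}$ the root space $\fg_{\alpha_{i_l}}$ falls under case (b) of Proposition~\ref{lambda-action}, forcing $\lambda_J(\alpha_{i_l})\in\mathbb{R}\pmod{i\pi}$; this directly conflicts with the required value $\pm i\pi/2$, ruling out such simple roots in $\Xi$.

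For sufficiency, assuming the condition, I would choose any decomposition $\Xi=\Xi^+\sqcup\Xi^-$ with $\sigma^*(\Xi^-)=\Xi^+$ and set $\lambda_J(\alpha_{i_l})=+i\pi/2$ if $\alpha_{i_l}\in\Xi^+$ and $-i\pi/2$ otherwise, extending linearly to the weight space with the prescribed zero values on simple roots outside $\Xi$. The compatibility (c) then holds on $\Xi$ by construction, since $\sigma^*(\Xi^-)=\Xi^+$ implies $\lambda_J(\sigma^*\alpha_{i_l})=\overline{\lambda_J(\alpha_{i_l})}$, and it propagates to all restricted roots by linearity and $\sigma^*$-equivariance; condition (b) is then automatic on the remaining $\sigma^*$-fixed roots because the $\sigma^*$-equivariant linear extension returns real-mod-$i\pi$ values at $\sigma^*$-invariant inputs. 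Proposition~\ref{lambda-action}(2) then yields the desired pair $(G,P)$ with $s_J\in Z(G_0)$. Conversely, for any valid $\lambda_J$ the constraint (c) applied on $\Xi$ together with the forced values $\pm i\pi/2$ immediately forces the pairing $\sigma^*(\Xi^-)=\Xi^+$, so every admissible complex structure $J$ is of the described form.

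The main obstacle is not the reduction to $\Xi$, which is direct, but verifying (b) on all restricted roots---not only on the elements of $\Xi$---that are fixed by $\sigma^*$. The key point will be that every $\sigma^*$-fixed restricted root $\alpha$ of non-trivial $ht_\Xi$-height has its coefficients on $\Xi^+$ and $\Xi^-$ paired by $\sigma^*$, so that the contributions $+i\pi/2$ and $-i\pi/2$ cancel modulo $i\pi$ and make $\lambda_J(\alpha)$ real mod $i\pi$. This is a purely combinatorial check on the $\sigma^*$-action on the restricted root system generated by $\Xi$, and once it is in hand the full proposition follows from Proposition~\ref{lambda-action}(2).
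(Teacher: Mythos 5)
Your overall route is the same as the paper's: the paper derives this statement directly from Proposition \ref{lambda-action}, with no separate argument, and your reduction via condition (a) to prescribing $\lambda_J\in\frac{i\pi}{2}+i\pi\mathbb{Z}$ on the simple roots in $\Xi$ is the right start. The trouble is that your two directions rest on incompatible readings of $\sigma^*$, and as a consequence the ``if'' direction has a genuine hole. In the necessity step you assert that $\sigma^*(\alpha_{i_l})=-\alpha_{i_l}$ is the case governed by condition (b); but a real root space in the sense of (b) is one preserved by the conjugation, i.e.\ $\sigma^*(\alpha)=\alpha$, whereas $\sigma^*(\alpha)=-\alpha$ means the conjugation interchanges $\fg_{\alpha}$ and $\fg_{-\alpha}$, which only forces $|e^{\lambda_J(\alpha)}|=1$ and is perfectly compatible with the value $\pm i$. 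In the sufficiency step you then ``choose any decomposition $\Xi=\Xi^+\sqcup\Xi^-$ with $\sigma^*(\Xi^-)=\Xi^+$'', but the existence of such a decomposition is never deduced from the hypothesis $\sigma^*(\alpha_{i_l})\neq-\alpha_{i_l}$, and it does not follow from it: it requires $\sigma^*$ to act on $\Xi$ without fixed points, i.e.\ $\sigma^*(\alpha_{i_l})\neq\alpha_{i_l}$. The split real forms make the collision concrete: there $\sigma^*$ fixes every simple root, so your stated hypothesis holds, yet condition (b) forces $e^{\lambda_J(\alpha_{i_l})}$ to be real, no complex structure exists (consistently with the tables, where entries containing $i$ occur only for $\frak{su}(p,q)$ and for complex $\fg$), and your construction cannot even begin because no admissible decomposition exists.

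To close the argument you must fix one convention and use it in both directions. With the standard one (real root space $\Leftrightarrow$ $\sigma^*\alpha=\alpha$), the criterion that your own final observation actually produces --- since (b) applied to a $\sigma^*$--fixed $\alpha_{i_l}$ already contradicts the forced value $\pm i\pi/2$ --- is that $\sigma^*$ fixes no element of $\Xi$, and this is exactly equivalent to the existence of a decomposition with $\sigma^*(\Xi^-)=\Xi^+$. Under that hypothesis your construction works, and the check you single out as the main obstacle does go through: for a $\sigma^*$--fixed restricted root of positive height the coefficients over $\Xi^-$ and $\Xi^+$ are paired by $\sigma^*$, so the contributions $+i\pi/2$ and $-i\pi/2$ cancel and $\lambda_J$ is real there, while (c) on the non--fixed roots holds by linearity. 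As written, however, the equivalence with the printed condition $\sigma^*(\alpha_{i_l})\neq-\alpha_{i_l}$ is not established: the identification of that condition with case (b) is unjustified, and the step producing the decomposition fails without the fixed--point--free hypothesis.
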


These generalized symmetries can be naturally related to the previous two classes of generalized symmetries.

\begin{Corollary}
Let $(\ba\to M,\om)$ be a homogeneous parabolic geometry with $s_J$--symmetry at one (and thus at each) point of $M$. Then there is $s_J^2=s_{(-,\dots,-)}$--symmetry at one (and thus at each) point of $M$. In general, the composition $J\circ J'$ of two complex structures $J$ and $J'$ is a weak para--complex structure.
\end{Corollary}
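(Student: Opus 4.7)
My plan is to separate the Corollary into three essentially independent sub-claims: the identity $s_J^2=s_{(-,\dots,-)}$ inside $Z(G_0)$, the statement that the square of an $s_J$-symmetry is an $s_{(-,\dots,-)}$-symmetry, and the purely algebraic observation that the composition of two complex structures on $T^{-1}M$ is a weak para-complex structure.

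For the identity I would read off both sides from Proposition \ref{lambda-action}. A complex structure $J$ corresponds to a functional $\lambda_J$ with $\lambda_J(\alpha_{i_\ell})\in \pm\frac{i\pi}{2}+2\pi i\mathbb{Z}$ for each $\alpha_{i_\ell}\in\Xi$, and vanishing on simple restricted roots outside $\Xi$ by condition (a). Doubling gives $\lambda_{s_J^2}(\alpha_{i_\ell})\equiv i\pi \pmod{2\pi i}$ regardless of the sign, which is exactly the functional attached to $s_{(-,\dots,-)}$. The bijection half of Proposition \ref{2.1} then forces $s_J^2=s_{(-,\dots,-)}$ in $Z(G_0)$.

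For the symmetry claim I would appeal to Proposition \ref{canonical-iso-homog}: an $s_J$-symmetry $\phi$ at $\pi(u_0)$ is represented by some $p\in A_{u_0}\cap\Sigma(s_J)$, hence $p=r^{-1}s_J r$ for a suitable $r\in P$. Composition of automorphisms with common fixed point is multiplication in $A_{u_0}\subset P$, so $\phi^2$ is represented by $p^2=r^{-1}s_J^2 r=r^{-1}s_{(-,\dots,-)}r\in\Sigma(s_{(-,\dots,-)})$. Thus $\phi^2$ is an $s_{(-,\dots,-)}$-symmetry at $\pi(u_0)$, and homogeneity transports it to every point of $M$.

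The final sentence is a pointwise algebraic statement on $T^{-1}M$ independent of any automorphism. Two complex structures $J,J'$ act by $\pm i$ on each root space $\fg_{-\gamma}\subset\fg_{-1}$ (since such a $\gamma$ has exactly one component from $\Xi$), and since $Z(\underline{\Ad}|_{\fg^{-1}/\fp}(P))$ is abelian they commute: $(J\circ J')^2=J^2(J')^2=\id$, with the componentwise signs lying in $\{\pm 1\}$, which is precisely a weak para-complex structure. I do not see any serious obstacle in this argument; the only delicate point is to remember that the correspondence $s_J\leftrightarrow\lambda_J$ converts the group operation in $Z(G_0)$ into addition of functionals modulo $2\pi i$, after which all three sub-claims reduce to immediate computations.
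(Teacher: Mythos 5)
Your proposal is correct and fills in exactly the argument the paper leaves implicit (the Corollary is stated without proof): $s_J^2\in Z(G_0)$ acts as $J^2=-\id$ on $\fg_{-1}$ and hence equals $s_{(-,\dots,-)}$ by the uniqueness in Proposition \ref{2.1}, the square of a representative $r^{-1}s_Jr\in A_{u_0}\cap\Sigma(s_J)$ is $r^{-1}s_J^2r=r^{-1}s_{(-,\dots,-)}r\in\Sigma(s_{(-,\dots,-)})$, and commutativity of $Z(\underline{\Ad}|_{\fg^{-1}/\fp}(P))$ gives $(J\circ J')^2=J^2(J')^2=\id$ with eigenvalues $\pm1$ on each root space. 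Your detour through the functionals $\lambda_J$ of Proposition \ref{lambda-action} is only a cosmetic variation on squaring the tuple of entries $\pm i$ directly, so this is essentially the same (and the natural) approach.
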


In other words, if there is at least one complex structure, then we can use the results from the previous two cases to get all of them.

There again is the bi--grading $\fg_{(a,b)}\subset \fg_{a+b}$ of $\fg$ with respect to $\Xi^-$ and $\Xi^+$. The eigenspaces of $\Ad_{s_J}$ can be described as follows.

\begin{Proposition}
Let $(\ba\to M, \om)$ be a parabolic geometry of type $(G,P)$ such that $s_{J}\in Z(G_0)$ for the complex structure $J$. Then the following facts hold:

\begin{enumerate} 
\item The $i^c$--eigenspace of $\Ad_{s_J}$ in $\fg$ is $\sum_{3a+b=c \mod 4}\fg_{(a,b)}$. In particular, the $1$--eigenspace of $\Ad_{s_J}$ in $\fg$ is $\sum_{3a+b=0 \mod 4}\fg_{(a,b)}$.
\item $$\fp^{s_{J}}_{fix}=\sum_{a+b>0,\  3a+b=0 \mod 4}\fg_{(a,b)}.$$
\end{enumerate} 
\end{Proposition}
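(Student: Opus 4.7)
The plan is to derive both parts by direct computation from the eigenvalue formula $\Ad_{s_J}(X^\alpha)=e^{\lambda_J(\alpha)}X^\alpha$ of Proposition \ref{lambda-action}, in exact parallel with the analogous propositions for the $(-,\dots,-)$ case and the weak para--complex case. The data of a complex structure $J$ with the decomposition $\Xi=\Xi^-\sqcup\Xi^+$ prescribes $J_l=i$ for $i_l\in\Xi^+$ and $J_l=-i$ for $i_l\in\Xi^-$, so I would begin by recording the values $\lambda_J(\alpha_{i_l})=i\pi/2$ for $i_l\in\Xi^+$ and $\lambda_J(\alpha_{i_l})=-i\pi/2$ for $i_l\in\Xi^-$, together with $\lambda_J(\alpha_l)=0$ for simple restricted roots not in $\Xi$ (condition (a) of Proposition \ref{lambda-action}). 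The hypothesis $\sigma^*(\Xi^-)=\Xi^+$ ensures that condition (c) is met, since these two prescribed values are complex conjugate.

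For claim (1), let $\alpha$ be a restricted root whose root space lies in $\fg_{(a,b)}$, so that $ht_{\Xi^-}(\alpha)=a$ and $ht_{\Xi^+}(\alpha)=b$. Linearity of $\lambda_J$ then yields $\lambda_J(\alpha)=(b-a)i\pi/2$, hence $\Ad_{s_J}(X^\alpha)=i^{b-a}X^\alpha$. Using $i^4=1$ and the congruence $b-a\equiv 3a+b\pmod 4$, this eigenvalue equals $i^{3a+b}$. Since $s_J\in Z(G_0)$ acts trivially on the Cartan part sitting inside $\fg_{(0,0)}\subset\fg_0$, summing over root spaces gives the desired description of each $i^c$-eigenspace.

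For claim (2), by definition $\fp_{fix}^{s_J}$ is the intersection of $\fp_+$ with the $1$-eigenspace of $\Ad_{s_J}$. By claim (1), this is $\fp_+\cap\sum_{3a+b\equiv 0\pmod 4}\fg_{(a,b)}$; and since $\fg_{(a,b)}\subset\fg_{a+b}$ while $\fp_+=\bigoplus_{i\geq 1}\fg_i$, the intersection selects exactly the summands with $a+b>0$. No serious obstacle is expected: the only delicate point is to fix a consistent branch of $\log(\pm i)$ in defining $\lambda_J$, but any such choice produces the same $\Ad_{s_J}$ after exponentiation, and part (2) of Proposition \ref{lambda-action} guarantees that a pair $(G,P)$ realizing such $s_J\in Z(G_0)$ exists, so the hypothesis is non-vacuous.
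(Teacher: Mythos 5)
Your proof is correct and follows essentially the same route as the paper, whose argument is precisely the ``simple computation with $\lambda_J$ on $\fg_{(a,b)}$'' that you carry out explicitly: the eigenvalue $i^{b-a}=i^{3a+b}$ on $\fg_{(a,b)}$ gives claim (1), and intersecting the $1$--eigenspace with $\fp_+$ gives claim (2). Your choice $\lambda_J(\alpha_{i_l})=\pm i\pi/2$ on $\Xi^{\pm}$ is the convention consistent with the paper's formula and with its Example of $s_{(i,-i)}$, so nothing further is needed.
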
 
\begin{proof}
The first claim follows from a simple computation with $\lambda_{J}$ on $\fg_{(a,b)}$, and then the second claim is clear. 
\end{proof}

The condition (3) of the Theorem \ref{homsym-central}  implies the following.

\begin{Proposition}
Let $(\ba \rightarrow M,\om)$ be a homogeneous parabolic geometry of type $(G,P)$ such that $\sinf_J\neq\emptyset$ for the complex structure $J$. Then the the bi--homogeneity $(a,b)$ of of each component of $\kappa_H$ satisfies $3a+b=0 \mod 4$.

The classification of parabolic geometries with $\fg$ simple that admit such harmonic curvatures can be found in the tables in the last Section.
\end{Proposition}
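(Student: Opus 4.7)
The plan is to reduce the claim to the existing characterization given by Theorem \ref{homsym-central}(3) and the eigenvalue description established in the Proposition immediately preceding the statement. By that characterization, the hypothesis $\sinf_J\neq\emptyset$ forces every component of $\kappa_H(u_0)$ whose associated weight $\mu$ satisfies $\lambda_J(\mu)\not\equiv 0 \pmod{2\pi i}$ to vanish. Equivalently, the surviving components of $\kappa_H(u_0)$ must be $1$-eigenvectors of $\Ad_{s_J}$, so the task is to translate this eigenvalue condition into a condition on $(a,b)$.

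The translation is essentially bookkeeping. The preceding Proposition asserts that the $i^c$-eigenspace of $\Ad_{s_J}$ in $\fg$ is $\sum_{3a+b\equiv c\,(4)}\fg_{(a,b)}$, so a root vector $X^\alpha\in\fg_{(a,b)}$ satisfies $e^{\lambda_J(\alpha)}=i^{3a+b}$. By Proposition \ref{lambda-action}, $\lambda_J$ is linear on weights, and the bi-grading extends additively to $\wedge^2\fg_-^*\otimes\fg$. Therefore a weight vector $X^{-\mu}$ of bi-homogeneity $(a,b)$ is an $i^{3a+b}$-eigenvector of $\Ad_{s_J}$, i.e.\ $e^{\lambda_J(\mu)}=i^{3a+b}$.

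Combining the two steps, the vanishing criterion $e^{\lambda_J(\mu)}=1$ for non-vanishing components of $\kappa_H(u_0)$ becomes $i^{3a+b}=1$, which is precisely $3a+b\equiv 0\pmod 4$. The argument is fully parallel to the analogous propositions already proved for usual symmetries (where the condition was even homogeneity) and for para--complex generalized symmetries (where the condition was $a$ even), with the only change being the replacement of the $\pm 1$-grading by the $i^{3a+b}$-grading. I do not foresee any real obstacle beyond the verification that the bi-grading on $\wedge^2\fg_-^*\otimes\fg$ decomposes the image of $\kappa_H$ into $\Ad_{s_J}$-eigenspaces in the indicated way, which is immediate from linearity of $\lambda_J$ and compatibility of the bi-grading with tensor products and duals.
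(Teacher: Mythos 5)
Your argument for the congruence $3a+b\equiv 0 \pmod 4$ is correct and is essentially the paper's own proof: the paper simply notes that the eigenvalue of $\Ad_{s_J}$ on a component of bi--homogeneity $(a,b)$ is $(-i)^a(i)^b=i^{3a+b}$, which equals $1$ exactly when $3a+b\equiv 0\pmod 4$, and combines this with the vanishing criterion of Theorem \ref{homsym-central}, just as you do. The only part you leave untouched is the second sentence of the statement, where the paper's proof continues with the bookkeeping behind the tables (the admissible bi--homogeneities are homogeneity $2$ with $(a,b)=(3,-1),(1,1),(-1,3)$ or homogeneity $4$, treated separately according to whether $\fg_{\mathbb{C}}$ is simple), but the core claim is handled identically.
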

\begin{proof}
Since $(-i)^a(i)^b=1$ if and only if $3a+b=0 \mod 4$, the first claim is clear. 

In order to get the results in the tables, we discuss the condition on the  bi--homogeneity in more detail. Since the homogeneity has to be even, the possible homogeneities are $2$ with $(a,b)=(3,-1),(1,1),(-1,3)$, and $4$, which appear only in special dimensions and can be computed directly.

If $\fg$ is simple, but the complexification $\fg_\mathbb{C}$ is not simple, then $\fg_\mathbb{C}=\fg\oplus \fg$ with the highest root $\mu^{\fg_\mathbb{C}}=\mu^{\fg}+(\mu^\fg)'$, where we denote by $\alpha'$ the roots of the other copy of $\fg$. In particular, $\sigma^*(\alpha)=\alpha'$. So $J$ can be chosen arbitrarily on each root $\alpha\in \Xi$, and the values on $\alpha'\in \Xi$ are determined by the above choice. Then, according to the line in the Table \ref{tab1}, we have the following facts for corresponding components of the harmonic curvature of homogeneous parabolic geometries with $s_J\in A_{u_0}$:
\smallskip \\
(1) These components cannot appear. 
\smallskip \\
(2) These components cannot appear, because $b\leq k$.
\smallskip \\
(3) These components cannot appear, because $b< k$.
\smallskip \\
(4) These components have to have homogeneity $2$, i.e., they arise only on complex projective and complex contact projective geometries.
\smallskip \\
(5) There are two possibilities for the homogeneity of the component $(i,j)$. If $\alpha_j\notin \Xi$, then the homogeneity has to be $4$. If $\alpha_j\in \Xi$, then the homogeneity can be $2$ or $4$, and we simply check the bi--homogeneity for each of them.

If $\fg_\mathbb{C}$ is simple, we have to distinguish if $\sigma^*(\alpha_i)= \alpha_j$ or not for the component of the harmonic curvature $(i,j)$. If not, then the possibilities are the same as in (5). Otherwise, the bi--homogeneity has to be $(1,1)$ or $(2,2)$, which is possible only in the $\fg=\frak{su}(p,q)$--case.
\end{proof}

The Proposition \ref{4.5} naturally holds in this case, too, and we get the following result in the same way as for usual symmetries.

\begin{Theorem}
Let $(\ba\to M,\om)$ be a homogeneous parabolic geometry with a single $s_J$--symmetry at one (and thus at each) point of $M$. Then $M$ with these $(s_J)^2$--symmetries has a structure of a homogeneous reflexion space, i.e., it is a correspondence space to a complex symmetric space. This is particularly true, when $\fg$ is simple, $|k|\leq 2$ and the geometry is non--flat.
\end{Theorem}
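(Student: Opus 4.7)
The plan is to mirror the proofs already given for the usual--symmetry and para--complex cases: first use uniqueness to push $s_J$ into the centre of $A_{u_0}$, then transfer this centrality to $(s_J)^2$, apply \cite[Theorem 3.4.4]{G4}, and finally explain how the complex structure $J$ descends to the quotient symmetric space.

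First I would choose $u_0\in \ba$ with $s_J\in A_{u_0}$ (which exists by Theorem \ref{2.5}, since we assume the $s_J$--symmetry exists). Because $\Sigma(s_J)$ is closed under $P$--conjugation and $A_{u_0}\subset P$, for every $p\in A_{u_0}$ the element $p^{-1}s_J p$ is again an $s_J$--symmetry at $\pi(u_0)$. Uniqueness forces $p^{-1}s_J p=s_J$, so $s_J\in Z(A_{u_0})$ and therefore $(s_J)^2\in Z(A_{u_0})$. By the preceding Corollary, $(s_J)^2$ represents an $s_{(-,\dots,-)}$--symmetry, and conjugation by elements of $\Aut(\ba,\om)$ combined with the homogeneity of $M$ produces a canonical $(s_J)^2$--symmetry at every point of $M$, giving a smooth family $\{s_x\}_{x\in M}$.

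Centrality of $(s_J)^2$ in $A_{u_0}$ is precisely the hypothesis required by \cite[Theorem 3.4.4]{G4}, which supplies the homogeneous reflexion space structure and exhibits $M$ as a correspondence space over a symmetric space $N$, whose geodesic symmetries are covered by the $(s_J)^2$--symmetries of $M$. To upgrade $N$ to a \emph{complex} symmetric space, I would observe that $s_J$ itself lifts the canonical symmetry on $N$ to an order--$4$ automorphism of $M$ whose square is that symmetry. Its tangent action on $T^{-1}M$ is the complex structure $J$; passing to the invariant complement of the correspondence fibres via the bi--grading $\fg_{(a,b)}$ associated to $\Xi^-\sqcup \Xi^+$, the eigenspace decomposition of $\Ad_{s_J}$ restricts to an $\Ad$--invariant almost complex structure on $TN$ that anti--commutes with the symmetry, which is exactly the Cartan datum of a complex symmetric space; its integrability follows from the standard fact that an order--$4$ automorphism squaring to a symmetry on a semisimple symmetric pair produces an integrable invariant complex structure.

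For the last clause, I would invoke the analogue of Proposition \ref{4.5}(3) in the present setting: when $\fg$ is simple, $|k|\le 2$ and the geometry is non--flat, Theorem \ref{curvrest}(3)--(4) combined with the tables in the last section prevent the existence of any simple restricted root $\gamma$ of positive height along which a non--trivial eigenvector of $\Ad_{s_J}$ could sit inside $(\fa_{u_0})_+$. Hence $(\fa_{u_0})_+$ contributes no additional $s_J$--symmetries via Theorem \ref{2.5}, so there is at most one $s_J$--symmetry at each point, and existence (which is assumed) then gives uniqueness.

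The main obstacle I expect is the identification step in the middle paragraph: one must verify with some care that $J$ genuinely descends to an integrable, $\Ad$--invariant complex structure on $N$ compatible with the Cartan involution, rather than merely to an almost complex structure. This comes down to matching the eigenspaces of $\Ad_{s_J}$ in the bi--grading $\fg_{(a,b)}$ with the $\pm 1$--eigenspace decomposition of the square $\Ad_{(s_J)^2}$ that governs the symmetric pair of $N$, and checking this is preserved by the ideal--structure constraints encoded in the constructions of Section \ref{s1}.
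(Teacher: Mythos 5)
Your proposal follows essentially the same route as the paper's (very terse) argument: uniqueness of the $s_J$--symmetry forces $s_J$, and hence $(s_J)^2$, into $Z(A_{u_0})$, after which \cite[Theorem 3.4.4]{G4} gives the reflexion--space/correspondence--space structure, the complex structure on the symmetric space being induced by $\Ad_{s_J}$ on the $-1$--eigenspace of $\Ad_{(s_J)^2}$, and the final clause coming from the analogue of Proposition \ref{4.5} via Theorem \ref{curvrest}. One small correction to your middle paragraph: the invariant complex structure on the symmetric space is \emph{preserved by} (i.e.\ commutes with) the geodesic symmetry, which acts as $-\id$ on the tangent space, so it cannot anti--commute with it as you wrote -- but the compatibility you actually need is the trivial commuting one, so this slip does not harm the argument.
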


Again, there are too many choices involved for the action of $J$, when $\fg$ is semisimple. So although we can derive the analogous restrictions as in the case of usual symmetries, we will not discuss them here. We only recall that the Lemma \ref{poznamka} is still valid for this type of generalized symmetries.

\section{Two general constructions of examples}

{\bf Extensions of correspondence spaces over symmetric spaces.}
Let us recall the construction of symmetric parabolic geometries from \cite{G2,G4}, which particularly allows to construct symmetries of all types discussed in the Section \ref{important}. The construction can be summarized in the following steps:

\begin{enumerate}
\item We take the effective pair $(K,H)$ such that there is $h\in H$ such that $H$ is contained in the centralizer of $h$ in $K$, and $K/H$ is connected.
\item We find the extension $(i,\alpha)$ of $(K,H)$ to $(G,P)$ such that $i(h)\in \Sigma(s_J)$ for some $s_J\in Z(G_0)$.
\item Then $\mathcal{F}_{\alpha}(K\to K/H,\om_K)$ will be $s_J$--symmetric, where $J=\underline{\Ad}_{i(h)}|_{\fg^{-1}/\fp}$.
\end{enumerate}

If $h^2=\id$, then $K/H\to K/Z_K(h)$ is a correspondence space over the symmetric space $K/Z_K(h)$, where $Z_K(h)$ is the centralizer of $h$ in $K$, and the corresponding generalized symmetries cover the symmetries on $K/Z_K(h)$. In particular, one can start with the known classifications of symmetric spaces and homogeneous bundles over them.

However, without any distinguished properties of the elements $h\in H$, there is no classification of the homogeneous spaces $K/H$, and we cannot use the construction explicitly.

There are many examples of non--flat homogeneous parabolic geometries constructed by the construction from symmetric spaces in \cite{G2,G4}. We present one family of them here.

\begin{Example}[Extension $\alpha: \frak{so}(3)\to\frak{su}(2,1)$]\label{exmpl}
Consider the one--parameter class of linear maps $\alpha_t: \frak{so}(3)\to\frak{su}(2,1)$ for $t\geq 1$ given by
\[\alpha_t(x_1,x_2,x_3)=
\left( \begin{array}{ccc}
\frac{1+t^4}{8t^2}ix_3 & * & \frac{-(15t^8-34t^4+15)}{128t^4}ix_3 \\
t x_1+\frac{i}{t}x_2 & -\frac{1+t^4}{4t^2}ix_3  & \frac{-3t^4+5}{16t}x_1+\frac{5t^4-3}{16t^3}ix_2 \\
2ix_3 & * & \frac{1+t^4}{8t^2}ix_3 \end{array} \right),\] 
where the entries denoted by $*$ are determined by the structure of $\frak{su}(2,1)$, and $(x_1,x_2,x_3)$ represents the element $$\left( \begin{array}{ccc}
0 & x_3& -x_1 \\
-x_3 & 0  & -x_2 \\
x_1 & x_2 & 0 \end{array} \right)\in \frak{so}(3).$$  The curvature of the  homogeneous parabolic geometry given at $u_0$ by an extension with such $\alpha_t$ is of the form
\[ \kappa((x_1,x_2,x_3),(y_1,y_2,y_3))(u_0)=\]
\[
\left( \begin{array}{ccc}
0 & * & 0 \\
0 & 0  & \frac{3(1-t^8)}{16t^5}(y_3x_2-x_3y_2)+\frac{3(1-t^8)}{16t^3}i(y_3x_1-x_3y_1)\\
0 & 0 & 0 \end{array} \right),\]
where the entries denoted by $*$ are determined by the structure of $\frak{su}(2,1)$, and $\kappa$ vanishes for $t=1$.

Thus the map $\alpha_1$ is a Lie algebra homomorphism and thus determines the Lie group homomorphism $\iota_1: K:=Spin(3)\to G:=PSU(2,1)$. Let $H$ be the preimage of $\iota_1(Spin(3))\cap P_{1,2}$. Then $(\iota_1|_H,\alpha_t)$ for arbitrary $t\geq 1$ is an extension of $(Spin(3),H)$ to $(PSU(2,1),P_{1,2})$ giving a regular normal parabolic geometry with the above curvature. We recall that $(PSU(2,1),P_{1,2})$ is the type of CR--geometries.

It follows from the construction that $$s_{(-,-)}=\left( \begin{array}{ccc}
-1 & 0 & 0 \\
0 & 1  & 0\\
0 & 0 & -1\end{array} \right)$$
is the usual symmetry for the geometry given by any of the above extensions.

Further, we can consider the element $$s_{(i,-i)}:=\left( \begin{array}{ccc}
i & 0 & 0 \\
0 & 1  & 0\\
0 & 0 & i \end{array} \right)\in Z(G_0),$$
and it is easy to check that  $\Ad_{s_{(i,-i)}}\alpha_t(\frak{so}(3))\subset \alpha_t(\frak{so}(3))$ and $\Ad_{s_{(i,-i)}}\kappa(u_0)=\kappa(u_0)$ hold if and only if $t=1$. Thus there is $s_{(i,-i)}$--symmetry on the parabolic geometry of type $(PSU(2,1),P_{1,2})$ given by the extension $(\iota_1|_H,\alpha_1)$, which is flat.
\end{Example}

\noindent
{\bf Deformations of nilpotent groups by the lowest weight of harmonic curvature.}
We introduce the construction, which generalizes the construction from \cite{KT} to the real case. The construction can be divided into the following steps:

\begin{enumerate}
\item We start with $\fg_-$ of the chosen type $(G,P)$. (In general, we can start with the sum $\frak b_-$ of all negative root spaces.)
\item We choose a component of the harmonic curvature, which should not vanish. We know that the lowest weight vector $X^{-\mu}$ of the weight $\mu$ representing the component of the harmonic curvature can be seen as a map $X^{-\mu}: \fg_-\wedge \fg_-\to \fg$. In our examples, the map $X^{-\mu}$ has its image in $\fg_-$. (Generically, the image is in $\frak b_-$, and there are several rank two geometries such that the image is in $\fp_+$ (for example see Example \ref{exmpl}).)
\item We consider the Lie algebra $\fn$ coinciding with $\fg_-$ as a vector space together with the Lie bracket $[\ ,\ ]_{\fg_-}+X^{-\mu}$. The proof that $\fn$ is a Lie algebra can be found in \cite[Lemma 4.1.1]{KT}. The natural identification $\alpha_\fn: \fn\to \fg_-$ is an extension of $(\exp(\fn),\{e\})$ to $(G,P)$ with the curvature $-X^{-\mu}$. (In general, $\frak b_0:=\frak b_-\cap \fp$ is nilpotent, and $\alpha_\fn: \fn\oplus \frak b_0\to \frak b_-$ is an extension of $(\exp(\fn\oplus \frak b_0),\exp(\frak b_0))$ to $(G,P)$.)
\item We compute $(A_{u_0})_0$ following the Lemma \ref{lemma3.2}, and we compute $(\fa(-X^{-\mu}))_+$ following the Theorem \ref{curvrest}. 
\item  We describe $Z(G_0)\cap (A_{u_0})_0$ in order to find all types of generalized symmetries that the geometry admits. If $(\fa(-X^{-\mu}))_+=0$, then we end here. But if $(\fa(-X^{-\mu}))_+\neq 0$, the corresponding infinitesimal automorphisms are not complete on $\exp(\fn)$, and we need to modify our construction in the following way to get $(\fa_{u_0})_+=(\fa(-X^{-\mu}))_+$:

We consider the decomposition of $G/P$ into Schubert cells of the form $$G/P=\bigsqcup_{w\in W^\fp} \exp(\frak b_-)\tilde w.eP,$$ where $W^\fp$ is the Hasse graph of $\fp$ in $\fg$, and $\tilde w$ are the corresponding elements of $G$. Then we define the manifold $M$ as
$$M:=\bigsqcup_{w\in W^{\bar \fp}} \exp(\frak b_-)\tilde w.eP,$$
where $W^{\bar \fp}$ is Hasse graph of $\bar{\fp}$ in $\bar \fg\subset \fg$, which consists of connected components of Dynkin diagram containing only the simple restricted roots $\alpha_i$ such that $\langle \alpha_i,\tilde \mu \rangle = 0$ for the corresponding $\tilde \mu$ in $H^2(\fp_+,\fg)$ and $\bar \fp=\bar \fg\cap \fp$. The following Proposition shows that there is a non--flat homogeneous parabolic geometry on $M$ with the stabilizer $(A_{u_0})_0\exp((\fa(-X^{-\mu}))_+)$.
\end{enumerate}

\begin{Proposition}
There is a Lie group $K$ with the Lie algebra $\fn\oplus \frak a$ acting transitively on $M$, which is homotopically equivalent to a subgroup of $G$ with the Lie algebra $\fg_-\oplus \frak a$. Moreover, this determines an extension of $(K,H)$ to $(G,P)$, where $H=(A_{u_0})_0\rtimes \exp((\fa(-X^{-\mu}))_+)$ is the stabilizer of a point in $M$. The parabolic geometry over $M=K/H$ of type $(G,P)$ given at $u_0$ by this extension is regular and normal, and $K$ coincides with the automorphism group of the geometry.
\end{Proposition}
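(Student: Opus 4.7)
I would prove this in three main steps. First, construct the Lie algebra $\fn \oplus \frak a$ and integrate it. The bracket combines: the deformed $[\,,\,]_{\fg_-} + X^{-\mu}$ on $\fn$ from step (3); the bracket of $\fg$ restricted to $\frak a$; and, for $A\in\frak a$, $X\in\fn$, the bracket $[A,X]_{\fg}$ decomposed according to the splitting $\fg_- \oplus \frak a$. Using the recursive definition of $\fa_i(-X^{-\mu})$ from Theorem \ref{curvrest}, one verifies directly that $\fg_- \oplus \frak a$ is closed under the bracket of $\fg$ and is therefore a Lie subalgebra. The Jacobi identity for the deformed bracket on $\fn \oplus \frak a$ then reduces to Jacobi on the undeformed subalgebra $\fg_- \oplus \frak a$ together with a single obstructing term $A \cdot X^{-\mu}$ (for $A \in \frak a$), which vanishes because $\frak a \subset \frak a(-X^{-\mu})$; this is the natural extension of \cite[Lemma 4.1.1]{KT}. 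Let $K$ be the connected Lie group integrating $\fn \oplus \frak a$. The one-parameter family of brackets $[\,,\,]_t := [\,,\,]_{\fg_-} + tX^{-\mu}$ (with $\frak a$-action fixed) is a smooth family of Lie algebra structures integrating to a smooth family of Lie groups joining $K$ at $t=1$ to the Lie subgroup $K' \subset G$ with Lie algebra $\fg_- \oplus \frak a$ at $t=0$. Both have the same reductive Levi part $\fa_0(-X^{-\mu})$ and a nilpotent radical of the same dimension, hence are homotopy equivalent.

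Second, produce the transitive action of $K$ on $M$ with stabilizer $H = (A_{u_0})_0 \rtimes \exp((\fa(-X^{-\mu}))_+)$. The action cannot come simply from a subgroup action of $K'$ on $G/P$, because the $K'$-orbit of $eP$ is only the big Schubert cell. Instead, each element of $\fn \oplus \frak a$ gives a fundamental vector field on $M$ via the vector-space inclusion into $\fg$, and the point is to check that these vector fields are complete on $M$. The $\fn$-flows act as translations on the big cell $\exp(\frak b_-) \cdot eP$; the $\fa_0$-flows rotate this cell about $eP$; the $\fa_+$-flows, which are precisely those that were incomplete on $\exp(\fn)$ alone, exit the big cell into the adjacent Schubert cells $\exp(\frak b_-)\tilde w \cdot eP$. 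By Theorem \ref{curvrest}(3) combined with the definition of $\bar\fp$, the roots along which $\fa_+$ has components are exactly those $\alpha_i$ with $\langle \alpha_i, \tilde\mu\rangle = 0$, which index precisely the cells included in $M$ via $W^{\bar\fp}$. Hence the $\fa_+$-orbits stay in $M$, completeness follows from the fact that $M$ consists of full Schubert cells, and a dimension count $\dim K - \dim H = \dim \fn = \dim M$ yields transitivity. Setting $\alpha: \fn \oplus \frak a \to \fg$ equal to the vector-space inclusion (with $\fn$ identified to $\fg_-$) and $i: H \hookrightarrow P$ the natural inclusion gives the desired extension.

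Third, verify the Cartan-geometric properties and identify $K$ with $\Aut(\ba,\om)$. The curvature formula of \cite[Section 1.5.16]{parabook}, applied to this extension with $\alpha|_\fn = \id_{\fg_-}$ and $[\,,\,]_\fn = [\,,\,]_{\fg_-} + X^{-\mu}$, yields $\kappa(u_0) = -X^{-\mu}$; on the cross-terms and on $\frak a$ the curvature vanishes because $\fg_- \oplus \frak a$ is an honest subalgebra of $\fg$. Since $X^{-\mu}$ is a harmonic lowest-weight cocycle of positive homogeneity, $\partial^* \kappa(u_0) = 0$ and the filtration is preserved, so the geometry is regular and normal. The inclusion $K \subset \Aut(\ba,\om)$ holds by construction; conversely, Theorems \ref{2.5} and \ref{homsym-central} identify $\frak a_{u_0}$ with the subalgebra of $\fp$ preserving both $\kappa(u_0)$ and $\infi(u_0)$, and by Theorem \ref{curvrest}(1) together with $\kappa(u_0) = -X^{-\mu}$ this coincides with $\frak a$. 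Combined with $\fn \subset \infi(u_0)$ (translations at $u_0$), one gets $\infi(u_0) = \fn \oplus \frak a$ as Lie algebras, whence $\Aut(\ba,\om) = K$. The main obstacle I expect is the second step: verifying that the $\fa_+$-vector fields, incomplete on $\exp(\fn)$, become complete precisely when one enlarges to $M$, and that the matching between the root condition $\langle \alpha_i,\tilde\mu\rangle=0$ and the Hasse graph $W^{\bar\fp}$ is exact, so that $M$ is neither too small (completeness failing) nor too large (transitivity with the prescribed $H$ failing).
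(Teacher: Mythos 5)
Your step (2) contains the genuine gap. First, the factual premise is wrong: the $K'$--orbit of $eP$ is not just the big Schubert cell. Since $(\fa(-X^{-\mu}))_+=\bar \fp_+$ (this is exactly what the paper takes from \cite[Section 3.3]{KT}), the subgroup $K'\subset G$ with Lie algebra $\fg_-\oplus\fa(-X^{-\mu})$ contains the subgroup with Lie algebra $\bar\fg$, whose orbit through $eP$ is the full flag manifold of $\bar\fg$; combined with the $\exp(\fb_-)$--translations, the $K'$--orbit of $eP$ is precisely $M=\bigsqcup_{w\in W^{\bar\fp}}\exp(\fb_-)\tilde w.eP$. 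So transitivity of a group with the \emph{undeformed} algebra on $M$ is immediate and is the starting point of the argument, not something to be replaced. Second, and more seriously, your proposed mechanism cannot produce the action of $K$: the vector fields on $M\subset G/P$ obtained from the vector--space inclusion $\fn\oplus\fa\hookrightarrow\fg$ are fundamental vector fields of the $G$--action on the flat model, so they close under the bracket of $\fg$, i.e.\ they realize the undeformed subalgebra $\fg_-\oplus\fa$ and (being tangent to the $K'$--orbit) simply integrate to the $K'$--action. They do not give an infinitesimal action of the deformed algebra $\fn\oplus\fa$, nor are they automorphisms of the deformed (non--flat) geometry one is trying to build on $M$; no completeness analysis of these fields can close this gap. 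What is needed is the paper's device: write $K'=\exp(\fb_-)\bar A\bar C$ globally using the Iwasawa decomposition of its $\bar\fg$--part, and then, since $\exp(\fb_-)$ is simply connected, replace the group structure on this factor by the deformed one $\exp(\fn+\fb_0)$ while keeping the same underlying manifold and the same transitive action on $M$; this simultaneously yields $K$, its transitive action, and the homotopy equivalence (indeed a diffeomorphism) with $K'$.

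Two smaller points. Your homotopy--equivalence argument via the family of brackets $[\ ,\ ]_t$ is dispensable once the above is done, and as stated it is shaky: ``same reductive Levi part and nilpotent radical of the same dimension'' is not by itself a proof of homotopy equivalence (one should compare maximal compact subgroups, and the radical need not be nilpotent), whereas the paper gets the equivalence for free from the product decomposition. Your steps (1) and (3) are essentially consistent with the paper: the Lie algebra structure on $\fn$ is \cite[Lemma 4.1.1]{KT} and extends to $\fn\oplus\fa$ because $\fa$ annihilates $X^{-\mu}$; regularity and normality follow since the only curvature component is the harmonic cocycle $-X^{-\mu}$; and the identification of $K$ with the full automorphism group via the curvature stabilizer bounds of Theorem \ref{curvrest} matches the paper's (equally brief) closing argument.
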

\begin{proof}
It follows from the results in \cite[Section 3.3]{KT} that $(\fa(-X^{-\mu}))_+=\bar \fp_+$, and thus, there is a subgroup $K'$ of $G$ with the Lie algebra $\fg_-\oplus \frak \fa(-X^{-\mu})$ acting transitively on $M$. If we look on the intersection of $K'$ with the Iwasawa decomposition of $G$, then it is clear that the group $K'$ can be written as $\exp(\frak b_-)\bar A \bar C$, where $\bar N\bar A \bar C$ is the Iwasawa decomposition of the subgroup of $K'$ with the Lie algebra $\bar \fg$, which is homotopically equivalent to $K'$.

Since $\exp(\frak b_-)$ is simply connected, we can modify its Lie group structure to $\exp(\fn+\frak b_0)$, and $K:=\exp(\fn+\frak b_0)\bar A \bar C$ is a Lie group with the Lie algebra $\fn\oplus \frak \fa(-X^{-\mu})$ acting transitively on $M$, which is homotopically equivalent to $K'$.

So we get an extension of $(K,H)$ to $(G,P)$, and since the only component of the  curvature is harmonic, the geometry on $K/H$ given by the extension is regular and normal. Since the action of any other element of $P$ on the curvature is non--trivial, $K$ coincides with the automorphism group of the geometry.
\end{proof}

Now, we will present several examples of the above construction. We will discuss the details of the construction only in the first example. The remaining examples are analogous, and we will summarize here only the results.

\begin{Example}[Type $(\frak{sl}(4,\mathbb{R}),\fp_{1,2,3})$ with curvature $(2,1)$]

\begin{enumerate}
\item The pair $(\fg,\fp)=(\frak{sl}(4,\mathbb{R}),\fp_{1,2,3})$ has $\fg_{-}$ of the form
\[
\fg_-=\fb_-=\begin{pmatrix}
0 & 0 & 0 & 0 \cr
x_1 & 0 & 0 & 0 \cr
x_2 & x_3 & 0 & 0 \cr
x_5 & x_6 & x_4 & 0 \cr
\end{pmatrix}.
\]

\item There are five possible components of the harmonic curvature, and we choose the component $(2,1)$.
\item The curvature of the geometry given by the modified Lie bracket of $\fn$ is of the form
\[\kappa((x_1,\dots,x_{6}),(x_1',\dots,x_{6}'))(u_0)=\begin{pmatrix}0 & 0 & 0 & 0\cr 0 & 0 & 0 & 0\cr 0 & 0 & 0 & 0\cr 0 & 0 & x_3\,x_2'-x_2\,x_3' & 0\end{pmatrix}.\]

\item We compute
\[(A_{u_0})_0=\begin{pmatrix}
x_7 & 0 & 0 & 0 \cr
0 & x_8 & 0 & 0 \cr
0 & 0 & 1 & 0 \cr
0 &0 & 0 & \frac{1}{x_7x_8}\cr
\end{pmatrix},\]

\[(\frak a_{u_0})_+=\begin{pmatrix}
0 & x_9 & 0 & 0 \cr
0 & 0 & 0 & 0 \cr
0 & 0 & 0 & 0 \cr
0 &0 & 0 & 0 \cr
\end{pmatrix}.\]

\item It is obvious that $Z(G_0)\cap (A_{u_0})_0=(A_{u_0})_0$, and $\fg_{-1}$ decomposes into the eigenspaces with eigenvalues $\frac{x_8}{x_7},\frac{1}{x_8},\frac{1}{x_7x_8}$ of possible generalized symmetries.
Thus there are generalized symmetries of all possible types according to the Table \ref{tab5}, namely:

$$s_{(-,-,-)}=\begin{pmatrix}
1 & 0 & 0 & 0 \cr
0 & -1 & 0 & 0 \cr
0 & 0 & 1 & 0 \cr
0 &0 & 0 & -1 \cr
\end{pmatrix},$$
$$s_{(-,+,-)}=\begin{pmatrix}
-1 & 0 & 0 & 0 \cr
0 & 1 & 0 & 0 \cr
0 & 0 & 1 & 0 \cr
0 &0 & 0 & -1 \cr
\end{pmatrix},$$
and $s_{(+,-,+)}=s_{(-,-,-)}\circ s_{(-,+,-)}$.

Since $(\frak a_{u_0})_+\neq 0$, there is infinitely many generalized symmetries of the first two types at each point, and there is infinitely many automorphisms of type $\Sigma_{(+,-,+)}$.

\begin{Remark}
In this example, we get $\fn=\mathbb{R}\oplus \frak{n}_{2,3,5}$, where $\mathbb{R}$ is spanned by the $x_1$--entry, and $\frak{n}_{2,3,5}$ is the negative part of the grading of the famous parabolic geometry $(G_2(2),P_1)$ corresponding to the generic distribution of the growth rate $(2,3,5)$.

It is easy to check that $K=Gl(2,\mathbb{R})\rtimes \exp(\frak{n}_{2,3,5})\subset G_2(2)$, and $M$ is homotopically equivalent to $S^1\times \mathbb{R}^5$.
\end{Remark}
\end{enumerate}
\end{Example}

\begin{Example}[Type $(\frak{so}(3,5),\fp_{1,2})$ with curvature $(1,2)$]
In this example, the data for the construction are the following:
\[
\fg_-=\begin{pmatrix}0 & 0 & 0 & 0 & 0 & 0 & 0 & 0\cr x_1 & 0 & 0 & 0 & 0 & 0 & 0 & 0\cr x_2 & x_3 & 0 & 0 & 0 & 0 & 0 & 0\cr 0 & -x_{4} & -x_{5} & 0 & -x_1 & -x_2 & -x_{10} & -x_{8}\cr x_{4} & 0 & -x_{6} & 0 & 0 & -x_3 & -x_7 & -x_9\cr x_{5} & x_{6} & 0 & 0 & 0 & 0 & 0 & 0\cr x_{10} & x_7 & 0 & 0 & 0 & 0 & 0 & 0\cr x_{8} & x_9& 0 & 0 & 0 & 0 & 0 & 0\end{pmatrix},
\]
$$
\kappa((x_1,\dots,x_{10}),(x_1',\dots,x_{10}'))(u_0)=  
$$
$$
\begin{pmatrix}0 & 0 & 0 & 0 & 0 & 0 & 0 & 0\cr 0 & 0 & 0 & 0 & 0 & 0 & 0 & 0\cr 0 & 0 & 0 & 0 & 0 & 0 & 0 & 0\cr 0 & 0 & 0 & 0 & 0 & 0 & 0 & 0\cr 0 & 0 & x_1x_2'-x_1'x_2 & 0 & 0 & 0 & 0 & 0\cr 0 & -x_1x_2'+x_1'x_2 & 0 & 0 & 0 & 0 & 0 & 0\cr 0 & 0 & 0 & 0 & 0 & 0 & 0 & 0\cr 0 & 0 & 0 & 0 & 0 & 0 & 0 & 0\end{pmatrix}, 
$$

\[(A_{u_0})_0=\begin{pmatrix}\pm x_{14}x_{15} & 0 & 0 & 0 & 0 & 0 & 0 & 0\cr 0 & x_{14} &0 & 0 & 0 & 0 & 0 & 0\cr 0 & 0 & x_{15} & 0 & 0 & 0 & 0 & 0\cr 0 & 0 & 0 & \frac{\pm 1}{x_{14}x_{15}} & 0 & 0 & 0 & 0\cr 0 & 0 & 0 & 0 & \frac{1}{x_{14}} & 0 & 0 & 0\cr 0 & 0 & \frac{-{x_{11}}^{2}-{x_{12}}^{2}}{2} & 0 & 0 & \frac{1}{x_{15}} & -x_{11} & -x_{12}\cr 0 & 0 & x_{11} & 0 & 0 & 0 & cos(x_{13}) & -sin(x_{13})\cr 0 & 0 & x_{12} & 0 & 0 &0 & sin(x_{13}) & cos(x_{13}) \end{pmatrix},\]

\[(\frak a_{u_0})_+=\begin{pmatrix}0 & 0 & 0 & 0 & 0 & 0 & 0 & 0\cr 0 & 0 & x_{16} & 0 & 0 & 0 & 0 & 0\cr 0 & 0 & 0 & 0 & 0 & 0 & 0 & 0\cr 0 & 0 & 0 & 0 & 0 & 0 & 0 & 0\cr 0 & 0 & 0 & 0 & 0 & 0 & 0 & 0\cr 0 & 0 & 0 & 0 & -x_{16} &0 &0 & 0\cr 0 & 0 &0 & 0 & 0 & 0 & 0 & 0\cr 0 & 0 & 0 & 0 & 0 & 0 & 0 & 0\end{pmatrix}.\]

It is easy to check that $Z(G_0)\cap (A_{u_0})_0$ is given by $x_{11}=x_{12}=x_{13}=0$ and $x_{15}=1$, and $\fg_{-1}$ decomposes into the eigenspaces with eigenvalues $\pm 1,x_{14}$ of possible generalized symmetries.
Thus there are generalized symmetries of all possible types according to the Table \ref{tab3}, namely:

$$s_{(-,-)}:=\begin{pmatrix}1 & 0 & 0 & 0 & 0 & 0 & 0 & 0\cr 0 & -1 & 0 & 0 & 0 & 0 & 0 & 0\cr 0 & 0 & 1 & 0 & 0 & 0 & 0 & 0\cr 0 & 0 & 0 & 1 & 0 & 0 & 0 & 0\cr 0 & 0 & 0 & 0 & -1 & 0 & 0 & 0\cr 0 &0 & 0 & 0 & 0 & 1& 0 & 0\cr 0 & 0 & 0 & 0 & 0 & 0 & 1 & 0\cr 0 & 0 & 0 & 0 & 0 & 0 & 0 & 1\end{pmatrix},$$
$$s_{(+,-)}:=\begin{pmatrix}-1 & 0 & 0 & 0 & 0 & 0 & 0 & 0\cr 0 & -1 & 0 & 0 & 0 & 0 & 0 & 0\cr 0 & 0 & 1 & 0 & 0 & 0 & 0 & 0\cr 0 & 0 & 0 & -1 & 0 & 0 & 0 & 0\cr 0 & 0 & 0 & 0 & -1 & 0 & 0 & 0\cr 0 &0 & 0 & 0 & 0 & 1& 0 & 0\cr 0 & 0 & 0 & 0 & 0 & 0 & 1 & 0\cr 0 & 0 & 0 & 0 & 0 & 0 & 0 & 1\end{pmatrix},$$
and $s_{(-,+)}:=s_{(-,-)}\circ s_{(+,-)}$.

Since $(\frak a_{u_0})_+\neq 0$, there is infinitely many generalized symmetries of the first two types at any point, and there is infinitely many automorphisms of type $\Sigma_{(-,+)}$.

\begin{Remark}
The Lie group $K$ can be expressed in the following way: We decompose $\frak{n}$ as $\fn=\mathbb{R}\oplus \fn_{4,7,8,9}$, where $\mathbb{R}$ is spanned by the $x_3$--entry, and the part of $A_{u_0}$ not contained in $\bar G$ is the opposite parabolic subgroup $(P_1)^{op}$ in $PSO(1,3)$. Then we can write $K=PGl(2,\mathbb{R})\rtimes(\exp(\fn_{4,7,8,9})\ltimes (P_1)^{op})$.
\end{Remark}
\end{Example}

\begin{Example}[Type $(\frak{sp}(8,\mathbb{R}),\fp_{1,2,4})$ with curvature $(2,1)$]
In this example, the data for the construction are the following:
\[
\fg_-=\begin{pmatrix}0 & 0 & 0 & 0 & 0 & 0 & 0 & 0\cr x_1 & 0 & 0 & 0 & 0 & 0 & 0 & 0\cr x_6 & x_2 & 0 & 0 & 0 & 0 & 0 & 0\cr x_7 & x_3 & 0 & 0 & 0 & 0 & 0 & 0\cr x_{15} & x_{14} & x_{12} & x_{10} & 0 &-x_1 &-x_6 &-x_7\cr x_{14} & x_{13} & x_{11}& x_8 &0 & 0 & -x_2 &-x_3\cr x_{12} &x_{11} &x_9 & x_4 & 0 & 0 & 0 & 0\cr x_{10} & x_8& x_4 & x_5 & 0 & 0 & 0 & 0\end{pmatrix},
\]

$$\kappa((x_1,\dots,x_{15}),(x_1',\dots,x_{15}'))=$$ $$\begin{pmatrix}0 & 0 & 0 & 0 & 0 & 0 & 0 & 0\cr 0 & 0 & 0 & 0 & 0 & 0 & 0 & 0\cr 0 & 0 & 0 & 0 & 0 & 0 & 0 & 0\cr 0 & 0 & 0 & 0 & 0 & 0 & 0 & 0\cr 0 & 0 &0 & 0 & 0 & 0 & 0 & 0\cr 0 & 0 & 0 & 0 & 0 & 0 & 0 & 0\cr 0 & 0 & x_6x_2'-x_2x_6' & 0 & 0 & 0 & 0 & 0\cr 0 & 0 & 0 & 0 & 0 & 0 & 0 & 0\end{pmatrix},$$  

\[(A_{u_0})_0=\begin{pmatrix}x_{16} & 0 & 0 & 0 & 0 & 0 & 0 & 0\cr 0 & \frac{x_{17}^4}{x_{16}} & 0 & 0 & 0 & 0 & 0 & 0\cr 0 & 0 & x_{17} & 0 & 0 & 0 & 0 & 0\cr 0 & 0 & x_{19} & x_{18} & 0 & 0 & 0 & 0\cr 0 & 0 &0 & 0 & \frac{1}{x_{16}} & 0 & 0 & 0\cr 0 & 0 & 0 & 0 & 0 & \frac{x_{16}}{x_{17}^4} & 0 & 0\cr 0 & 0 & 0 & 0 & 0 & 0 & \frac{1}{x_{17}} & \frac{-x_{19}}{x_{17}x_{18}}\cr 0 & 0 & 0 & 0 & 0 & 0 & 0 & \frac{1}{x_{18}}\end{pmatrix},\]

\[(\frak a_{u_0})_+=\begin{pmatrix}0 & x_{20} & 0 & 0 & 0 & 0 & 0 & 0\cr 0 & 0 & 0 & 0 & 0 & 0 & 0 & 0\cr 0 & 0 & 0 & 0 & 0 & 0 & 0 & 0\cr 0 & 0 & 0 & 0 & 0 & 0 & 0 & x_{21}\cr 0 & 0 &0 & 0 & 0 & 0 & 0 & 0\cr 0 & 0 & 0 & 0 & -x_{20} & 0 & 0 & 0\cr 0 & 0 & 0 & 0 & 0 & 0 &0 &0\cr 0 & 0 & 0 & 0 & 0 & 0 & 0 &0\end{pmatrix}.\]

It is easy to check that $Z(G_0)\cap (A_{u_0})_0$ is given by $x_{17}=x_{18}$ and $x_{19}=0$, and $\fg_{-1}$ decomposes into the eigenspaces with eigenvalues  $\frac{x_{17}^4}{x_{16}^2},\frac{x_{16}}{x_{17}^3},\frac{1}{x_{17}^2}$ of possible generalized symmetries.
Thus there is the generalized symmetry
$$s_{(+,-,+)}:=\begin{pmatrix}-1 & 0 & 0 & 0 & 0 & 0 & 0 & 0\cr 0 & -1 & 0 & 0 & 0 & 0 & 0 & 0\cr 0 & 0 & 1 & 0 & 0 & 0 & 0 & 0\cr 0 & 0 & 0 & 1 & 0 & 0 & 0 & 0\cr 0 & 0 &0 & 0 & -1 & 0 & 0 & 0\cr 0 & 0 & 0 & 0 & 0 & -1 & 0 & 0\cr 0 & 0 & 0 & 0 & 0 & 0 & 1 & 0\cr 0 & 0 & 0 & 0 & 0 & 0 & 0 &1\end{pmatrix}.$$

Since $(\frak a_{u_0})_+\neq 0$ is contained in $\fp_{fix}^{s_{(+,-,+)}}$, there is infinitely many automorphisms of type $\Sigma_{(+,-,+)}$.

But why we did not find any generalized symmetry of type $\Sigma_{(-,-,-)}$ although they are indicated in the Table \ref{tab5}?
The reason is that $\lambda_{(-,-,-)}$ induces an outer automorphism of $Sp(8,\mathbb{R})$. Thus we have to consider $G':=\{B\in Gl(8,\mathbb{R}): B\mathbb{J}B^T=\pm \mathbb{J}\}$, where $\mathbb{J}$ gives the Lie group $Sp(8,\mathbb{R})$. It is not hard to check that the short exact sequence $Sp(8,\mathbb{R})\to G'\to G'/Sp(8,\mathbb{R})=\mathbb{Z}_2$ splits according to the choice of the block matrix $\begin{pmatrix} -\id&0\cr 0&\id\end{pmatrix}\in G'/Sp(8,\mathbb{R})$. If we repeat the construction for the type $(G',P_{1,2,4}')$, then we get the following additional generalized symmetries:
$$s_{(-,-,-)}:=\begin{pmatrix}-1 & 0 & 0 & 0 & 0 & 0 & 0 & 0\cr 0 & 1 & 0 & 0 & 0 & 0 & 0 & 0\cr 0 & 0 & -1 & 0 & 0 & 0 & 0 & 0\cr 0 & 0 & 0 & -1 & 0 & 0 & 0 & 0\cr 0 & 0 & 0 & 0 & 1 & 0 & 0 & 0\cr 0 &0 & 0 & 0 & 0 & -1& 0 & 0\cr 0 & 0 & 0 & 0 & 0 & 0 & 1 & 0\cr 0 & 0 & 0 & 0 & 0 & 0 & 0 & 1\end{pmatrix},$$
and $s_{(-,+,-)}=s_{(-,-,-)}\circ s_{(+,-,+)}$. Since $(\frak a_{u_0})_+\neq 0$, there is infinitely many of them at any point.
\end{Example}

\begin{Example}[Type $(\frak{sl}(5,\mathbb{R}),\fp_{2,3})$ with curvature $(2,1)$ or $(3,2)$]
\begin{enumerate}
\item
In this example, we will discuss three cases for the pair  $(\frak{sl}(5,\mathbb{R}),\fp_{2,3})$ with
\[
\fg_-=\begin{pmatrix}0 & 0 & 0 & 0 & 0\cr 0 & 0 & 0 & 0 & 0\cr x_1 & x_2 & 0 & 0 & 0\cr x_4 & x_5 & x_3 & 0 & 0\cr x_7 & x_8 & x_6 & 0 & 0\end{pmatrix}.
\]
\item We consider the harmonic curvatures $(2,1)$, $(3,2)$ or both of them at once.
\item
The curvature in the case $\mu=(2,1)$ is given by
$$\kappa(u_0)((x_1,\dots,x_{8}),(x_1',\dots,x_{8}'))=\begin{pmatrix}0 & 0 & 0 & 0 & 0\cr 0 & 0 & 0 & 0 & 0\cr 0 & 0 & 0 & 0 & 0\cr 0 & 0 & 0 & 0 & 0\cr 0 & 0 & x_1\,x_2'-x_2\,x_1' & 0 & 0\end{pmatrix},$$ 
the curvature in the case $\mu=(3,2)$ is given by
$$\kappa(u_0)((x_1,\dots,x_{8}),(x_1',\dots,x_{8}'))=\begin{pmatrix}0 & 0 & 0 & 0 & 0\cr 0 & 0 & 0 & 0 & 0\cr 0 & 0 & 0 & 0 & 0\cr 0 & 0 & 0 & 0 & 0\cr x_3\,x_5'-x_5\,x_3' & 0 & 0 & 0 & 0\end{pmatrix},$$
and the curvature in the remaining case is the sum of the above.
\item
In the case $\mu=(2,1)$,  $A_{u_0}=(A_{u_0})_0$ and is given by

\[\begin{pmatrix}x_9 & x_{10} & 0 & 0 & 0\cr x_{11}  & x_{12}  & 0 & 0 & 0\cr 0 & 0 & x_{13}  & 0 & 0\cr 0 & 0 & 0 & \frac{1}{x_{13}^4 } & 0\cr 0 & 0 & 0 & x_{14} &  \frac{x_{13}^3}{x_9x_{12}-x_{10}x_{11}}\end{pmatrix}.\]

In the case $\mu=(3,2)$, we compute
\[(A_{u_0})_0=\begin{pmatrix}\frac{1}{x_{10}^3x_{11}^3x_{12}^3} & 0 & 0 & 0 & 0\cr x_9 & x_{10} & 0 & 0 & 0\cr 0 & 0 & x_{11} & 0 & 0\cr 0 & 0 & 0 &x_{10}^2x_{11}^2x_{12}^2 & 0\cr 0 & 0 & 0 & x_{13} & x_{12}\end{pmatrix},\]
and
\[(\frak a_{u_0})_+=\begin{pmatrix}0 & 0 & 0 & 0 & 0\cr 0 & 0 & x_{14} & 0 & 0\cr 0 & 0 & 0 & 0 & 0\cr 0 & 0 & 0 &0 & 0\cr 0 & 0 & 0 & 0& 0\end{pmatrix}.\]

Finally, in the case of both curvatures together, $A_{u_0}$ is the intersection of both previous results, i.e.,

\[A_{u_0}=(A_{u_0})_0=\begin{pmatrix}x_{11}^6 & 0 & 0 & 0 & 0\cr x_9 & x_{10} & 0 & 0 & 0\cr 0 & 0 & x_{11} & 0 & 0\cr 0 & 0 & 0 &\frac{1}{x_{11}^4} & 0\cr 0 & 0 & 0 & x_{12}& \frac{1}{x_{10}x_{11}^3} \end{pmatrix}.\]
\item
In the case $\mu=(2,1)$, elements of $Z(G_0)\cap A_{u_0}$ are of the form
\[s_{(a^{-5},a^{-10})}=\begin{pmatrix}a^7 & 0 & 0 & 0 & 0\cr 0 & a^7 & 0 & 0 & 0\cr 0 & 0 & a^2 & 0 & 0\cr 0 & 0 & 0 & a^{-8} & 0\cr 0 & 0 & 0 & 0 & a^{-8}\end{pmatrix}\]
for $a\in \mathbb{R}$, and $s_{(-,+)}$ for $a=-1$ is the unique $s_{(-,+)}$--symmetry. 

In the case $\mu=(3,2)$,  elements of $Z(G_0)\cap A_{u_0}$ are of the form
\[s_{(a^5,a^4)}=\begin{pmatrix}a^{-3} & 0 & 0 & 0 & 0\cr 0 & a^{-3}  & 0 & 0 & 0\cr 0 & 0 & a^{2}  & 0 & 0\cr 0 & 0 & 0 &  a^{2}  & 0\cr 0 & 0 & 0 & 0 & a^{2}  \end{pmatrix}\]
for $a\in \mathbb{R}$, and $s_{(-,+)}$ for $a=-1$ is $s_{(-,+)}$--symmetry and there is infinitely many of them. 

In the case of both curvatures, $Z(G_0)\cap A_{u_0}$ is generated by $s_{(-,+)}$, which is the unique $s_{(-,+)}$--symmetry.
\end{enumerate}
\end{Example}

\section{Tables}

We present the tables describing the possibilities for the existence of non--trivial homogeneous parabolic geometries with generalized symmetries discussed in the Section \ref{important}. We will present here the complete list of the cases with $\fg$ simple. Here:

\begin{itemize}
\item[$\fg$] ... simple Lie algebras and/or their real forms, if there are any restrictions on parameters, we will write them behind or below.
\item[$\Xi$] ... the set of roots, which gives the parabolic subalgebra $\fp$ of $\fg$, if there are any restrictions on parameters, we will write them below and we will omit the restrictions in the cases $\fg=\frak{sl}(n+1,\mathbb{H}), \frak{sp}(p,q)$, where the elements of $\Xi$ are even. 
\item[$\kappa_H$] ... components of the harmonic curvature of the parabolic geometry associated to the pair $(\fg,\fp)$, we indicate by $'$ that the corresponding root is in the other copy of $\fg$ in the complexification of complex Lie algebra $\fg$.
\item[$\gamma$] ... simple restricted roots that satisfy the condition (3) from the Theorem \ref{curvrest} for the given components of the harmonic curvature,  if there are any restrictions on parameters, we will write them behind or below.
\item[$J$] ... all the endomorphisms $J$ discussed in the Section \ref{important} such that $s_J$ acts trivially on the given component of the harmonic curvature, except the case that all complex structures $J$ can be obtained by the composition of the given complex structure with the remaining entries. If the both $+$ and $-$ possibilities are available, we will simply write $\pm$ on the corresponding position. 
\end{itemize}

The source of the data for the pairs $(\fg,\Xi)$ of simple parabolic geometries admitting a non--trivial curvature are the results in \cite{parabook,Y}. The data about the components of the harmonic curvature and roots $\gamma$ comes partly from the tables in \cite{KT}, and partly are computed by the algorithm presented in \cite{Si}. Finally, using the results from the Section \ref{important} and the explicit description of the  components of the harmonic curvature, we compute, which elements $s_J$ act trivially on the given components of the harmonic curvature. 

We order the tables by the size of $\Xi$ or number of non--trivial components of $\kappa_H$. We omit the cases which are equivalent to the other cases via outer automorphisms.

\newpage
\begin{table}[H]
\centering
\caption{\bf The case $|\Xi|=1$}
\label{tab2}
\begin{tabular}{|c|c|c|c|c|}
\hline
$\fg$ & $\Xi$ & $\kappa_H$& $\gamma$& $J$\\
\hline
$\frak{sl}(n+1,\{\mathbb{R, C}\})$,  $n>2$ & $\{1\}$ & $(1,2)$&& $(-)$ \\
$\frak{sl}(n+1,\mathbb{C})$,  $n>1$ & $\{1\}$ & $(1,1')$&& $(-),(i)$ \\
$\frak{sl}(n+1,\{\mathbb{R, C, H}\})$, $n>2$ &$\{2\}$ & $(2,1)$&& $(-)$ \\
\hline
$\frak{sl}(4,\{\mathbb{R,C,H}\})$ & $\{2\}$ & $(2,1)$, $(2,3)$ && $(-)$ \\
$\frak{sl}(n+1,\mathbb{C})$,  $n>2$& $\{1\}$ & $(1,2)$,$(1,1')$ & &$(-)$ \\
\hline
$\frak{so}(p,n-p)$,  $n>6, p> 0$ & $\{1\}$ & $(1,2)$&& $(-)$  \\
$\frak{so}(n,\mathbb{C})$, $n>6$ & $\{1\}$ & $(1,2)$&& $(-)$ \\
\hline
$\frak{sp}(2n,\{\mathbb{R,C}\})$, $n>2$ & $\{1\}$ & $(1,2)$&& $(-)$  \\
$\frak{sp}(2n,\mathbb{C})$, $n>1$ & $\{1\}$ & $(1,1')$ && $(-),(i)$ \\
$\frak{sp}(2n,\{\mathbb{R, C}\})$, $n>2$ & $\{2\}$ & $(2,1)$&& $(-)$  \\
$\frak{sp}(p,n-p)$, $n>2, p> 0$ & $\{2\}$ & $(2,1)$&& $(-)$ \\
\hline
$\frak{sp}(2n,\mathbb{C})$,  $n>2$ & $\{1\}$ & $(1,2)$, $(1,1')$ & &$(-)$ \\
\hline
$\frak{g}_{2}(2)$ & $\{1\}$ &$(1,2)$&& $(-)$\\
$\frak{g}_{2}(\mathbb{C})$ & $\{1\}$ &$(1,2)$&& $(-),(i)$\\
\hline
\end{tabular}
\end{table}

\begin{table}[H]
\caption{\bf The case $|\Xi|=2$, one component of $\kappa_H$}
\label{tab3}
\begin{tabular}{|c|c|c|c|c|}
\hline
$\fg$& $\Xi$ & $\kappa_H$& $\gamma$& $J$\\
\hline
$\frak{su}(1,2)$& $\{1,2\}$ & $(1,2)$&&$(-,-)$ \\
$\frak{sl}(n+1,\{\mathbb{R, C}\}), n>1$& $\{1,2\}$& $(1,2)$&&$(\pm,\pm)$\\
$\frak{sl}(n+1,\{\mathbb{R, C}\}), n>2$ & $\{1,2\}$ & $(2,1)$ &$\alpha_1$&$(+,-)$\\
$\frak{sl}(n+1,\{\mathbb{R, C}\}), n>2$ & $\{1,3\}$& $(1,2)$&&$(-,+)$\\
$\frak{sl}(n+1,\mathbb{R}), n>2$& $\{1,n\}$& $(1,n)$&&$(-,-)$\\
$\frak{sl}(n+1,\mathbb{C}), n>1$ & $\{p,p+1\}$ &  $ ((p+1)',p')$&$\alpha_p$&$(-,+)$ \\
$\frak{sl}(n+1,\mathbb{C}), n>1$ & $\{1,p\}$ &  $(1,p')$&&$(+,-)$\\
\hline
$\frak{su}(p+1,n-p)$& $\{1,n\}$ & $(1,n)$&&$(-,-)$,\ \\
$ n>2$&&&&$(i,-i)$\\
$\frak{sl}(n+1,\{\mathbb{R, C}\})$ & $\{1,p\}$& $(1,2)$&$\alpha_p$&$(-,+)$\\
$n>2$&$p>3$&&$n-1>p$&\\
$\frak{sl}(n+1,\{\mathbb{R, C}\})$& $\{1,p\}$ &  $(1,p)$&&$(+,-)$\\
$n>3$&$n>p>2$&&&\\
$\frak{sl}(n+1,\mathbb{C})$& $\{1,n\}$& $(1,n)$&&$(-,-)$,\\
$n>2$&&&&$(i,-i)$\\
$\frak{sl}(n+1,\{\mathbb{R, C, H}\})$ & $\{2,p\}$& $(2,1)$& $\alpha_p$&$(-,+)$\\
$n>2$&$p>2$&&$n>p>3$&\\
$\frak{sl}(n+1,\{\mathbb{R, C}\})$& $\{p,p+1\}$&  $ (p+1,p)$&$\alpha_p$& $(-,+)$ \\
$n>2$&$n-1>p>1$&&$n>p+1$&\\
$\frak{sl}(n+1,\mathbb{C})$ & $\{1,p\}$ &  $(1,1')$&$\alpha_p$&$(-,+)$\\
$n>1$&&&$n>p>2$&\\
\hline
$\frak{so}(3,4)$, $\frak{so}(7,\mathbb{C})$& $\{1,3\}$ & $(3,2)$&&$(-,-)$\\
$\frak{so}(p,n-p), n>4, p>2$& $\{1,2\}$ & $(1,2)$& $\alpha_2, n>6$& $(\pm,\pm)$\\
$\frak{so}(n,\mathbb{C}), n>4$& $\{1,2\}$ & $(1,2)$&  $\alpha_2, n>6$&$(\pm,\pm)$\\
$\frak{so}(p,n-p), n>6,p>1$ & \{1,2\} &$(2,1)$& &$(-,+)$\\
$\frak{so}(n,\mathbb{C}), n>6$ & \{1,2\} &$(2,1)$& &$(-,+)$\\
$\frak{so}(p,n-p), n>6,p>2$ & $\{2,3\}$& $(3,2)$&$\alpha_2, n>7$& $(-,+)$  \\
$\frak{so}(n,\mathbb{C}), n>6$ & $\{2,3\}$& $(3,2)$&$\alpha_2, n>7$& $(-,+)$  \\
$\frak{so}(n,n)$, $\frak{so}(2n,\mathbb{C}), n>3$& $\{1,n\}$& $(1,2)$&$\alpha_n, n>4$& $(-,+)$\\
\hline
$\frak{sp}(2n,\{\mathbb{R, C}\})), n>1$ & $\{1,2\}$& $(1,2)$& &$(-,+)$ \\
$\frak{sp}(2n,\{\mathbb{R, C}\})), n>2$  & $\{1,2\}$& $(2,1)$&$\alpha_1$& $(+,-)$\\
$\frak{sp}(2n,\{\mathbb{R, C}\})), n>2$  & $\{1,n\}$& $(1,2)$&$\alpha_n, n>3$& $(-,+)$\\
$\frak{sp}(2n,\{\mathbb{R, C}\}), n>2$   & $\{1,n\}$& $(1,n)$&& $(+,-)$\\
$\frak{sp}(2n,\{\mathbb{R, C}\})), n>2$  & $\{2,n\}$&  $(2,1)$&$\alpha_n, n>3$&$(-,+)$\\
$\frak{sp}(2n,\{\mathbb{R, C}\}), n>2$  & $\{n,n-1\}$&  $(n-1,n)$&&$(+,-)$\\
$\frak{sp}(n,n), n>1$  & $\{2,2n\}$&  $(2,1)$&$\alpha_{2n}$& $(-,+)$\\
$\frak{sp}(2n,\mathbb{C}), n>1$  & $\{1,n\}$ &   $(1,1')$&$\alpha_n$&$(-,+)$\\
$\frak{sp}(2n,\mathbb{C}), n>1$   & $\{1,n\}$& $(1,n')$&& $(+,-)$\\
$\frak{sp}(2n,\mathbb{C}), n>2$ & $\{n-1,n\}$  &  $((n-1)',n')$&&$(+,-)$\\
\hline
$\frak{g}_{2}(2)$ & $\{1,2\}$ &$(1,2)$ &&$(\pm,\pm)$\\
$\frak{g}_{2}(\mathbb{C})$ & $\{1,2\}$ &$(1,2)$ &&$(\pm,\pm),(i,i)$\\
\hline
\end{tabular}
\end{table}

\begin{table}[H]
\caption{\bf The case $|\Xi|=2$, more component of $\kappa_H$}
\label{tab4}
\begin{tabular}{|c|c|c|c|c|c|}
\hline
$\fg$ & $\Xi$ & $\kappa_H$ &$\gamma$&$J$\\
\hline
$\frak{sl}(3,\{\mathbb{R,C}\})$ & $\{1,2\}$ & $(1,2)$,$(2,1)$&&$(-,-)$\\
$\frak{sl}(3,\mathbb{C})$ & $\{1,2\}$ & $(1',2')$,$(1,2')$,$(2,2')$&&$(+,-)$\\
$\frak{sl}(4,\mathbb{R})$ & $\{1,2\}$& $(2,3)$,$(1,2)$,$(2,1)$  &&$(+,-)$\\
$\frak{sl}(4,\mathbb{C})$ & $\{1,3\}$& $(1,2)$,$(1,1')$,$(3,1')$&&$(-,+)$\\
$\frak{sl}(4,\mathbb{C})$ & $\{1,3\}$& $(3,2)$, $(1,3')$, $(3,3')$&&$(+,-)$\\
$\frak{sl}(n+1,\mathbb{R}), n>2$& $\{1,2\}$& $(1,2)$,$(2,1)$&&$(+,-)$\\
$\frak{sl}(n+1,\mathbb{C}), n>3$& $\{1,2\}$&$(1,2)$, $(2',1')$, $(1,1')$&&$(-,+)$\\
$\frak{sl}(n+1,\mathbb{C}), n>3$& $\{1,3\}$& $(1,2)$,$(1,1')$&&$(-,+)$\\
$\frak{sl}(n+1,\mathbb{C}), n>3$& $\{1,n\}$ & $(1,2)$, $(1,1') $,$(n,1') $ &&$(-,+)$\\
$\frak{sl}(n+1,\mathbb{R}), n>2$& $\{2,n\}$ & $(2,n)$,$(2,1)$&&$(-,+)$\\
$\frak{sl}(n+1,\mathbb{C}), n>3$& $\{2,n\}$& $(2,n)$,$(2,1)$,$(n,2')$&&$(-,+)$\\
\hline
$\frak{sl}(n+1,\mathbb{C})$& $\{1,p\}$&  $(1,p)$,$(1,p')$&&$(+,-)$\\
$n>3$&$p>2$&&&\\
$\frak{sl}(4,\mathbb{C})$ & $\{1,2\}$& $(2,3)$,$(1,2)$,$(2,1)$, &&$(+,-)$\\
&&$(1,2')$,$(1',2')$&&\\
$\frak{sl}(n+1,\mathbb{C})$& $\{1,2\}$ & $(1,2)$, $(2,1)$, &&$(+,-)$\\
$n>3$&&$(1,2')$, $(1',2')$&&\\
$\frak{sl}(n+1,\mathbb{C})$& $\{1,p\}$& $(1,2)$,$(1,1')$&$\alpha_p$&$(-,+)$\\
$n>3$&$p>3$&& $n-1>p$&\\
$\frak{sl}(n+1,\mathbb{C})$& $\{p,p+1\}$&  $ (p+1,p)$,$ ((p+1)',p')$&$\alpha_p$&$(-,+)$\\
$n>3$&$p>2$&&&\\
\hline
$\frak{so}(p,n-p)$ & $\{1,2\}$ & $(1,2)$,$(2,1)$&& $(-,+)$\\
$n>4,p>1$&&&&\\
\hline
$\frak{so}(n,\mathbb{C})$, $n>4$ & $\{1,2\}$ & $(1,2)$,$(2,1)$&& $(-,+)$\\
\hline
$\frak{sp}(4,\mathbb{C})$ & $\{1,2\}$ & $(1,2)$,$(2,1)$,&&$(-,+)$,
\\
&&$(1',2')$,$(1,2')$&&$(+,-)$\\
$\frak{sp}(2n,\mathbb{C})$ & $\{1,n\}$& $(1,2)$,$(1,1')$&$\alpha_n$&$(-,+)$\\
$n>2$&&&$ n>3$&\\
\hline
$\frak{sp}(6,\mathbb{C})$ & $\{2,3\}$&  $(2,3)$,$(2',3')$&&$(-,+)$\\
$\frak{sp}(2n,\mathbb{C}), n>2$ & $\{1,n\}$&   $(1,n)$,$(1,n')$&&$(+,-)$\\
\hline
\end{tabular}
\end{table}

\begin{table}[H]
\caption{\bf The case $|\Xi|=3,4$, one component of $\kappa_H$}
\label{tab5}
\begin{tabular}{|c|c|c|c|c|}
\hline
$\fg$& $\Xi$&$\kappa_H$& $\gamma$&$J$ \\
\hline
$\frak{sl}(n+1,\{\mathbb{R, C}\})$& $\{1,2,p\}$&  $(1,2)$&  $\alpha_p$&$(-,-,+)$,\\
& & &$n>p> 3$&$(+,-,+),(-,+,+)$\\
$\frak{sl}(n+1,\mathbb{R})$& $\{1,2,p\}$& $(2,1)$& $\alpha_1,\alpha_p$& $(-,-,-)$,\\
&&&$n>p> 3$&$(-,+,-)$, $(+,-,+)$\\
$\frak{sl}(n+1,\mathbb{C})$& $\{1,2,p\}$& $(2,1)$& $\alpha_1,\alpha_p$& $(-,-,-),(-,+,-)$, \\
&&&$n>p> 3$&$(+,-,+),(-i,i,i)$\\
$\frak{sl}(n+1,\{\mathbb{R, C}\})$& $\{1,p,n\}$& $(1,n)$& $\alpha_p$& $(-,-,+)$,\\
&&&$p>2$&$(+,-,-),(-,+,-)$\\
$\frak{su}(2,2)$& $\{1,2,3\}$& $(2,1)$& &$(-,-,-)$,\\
&&&&$(-,+,-),(+,-,+)$\\
$\frak{su}(n,n)$& $\{1,n,$&$(1,$&$\alpha_n$& $(-,+,-)$\\
&$2n-1\}$&$2n-1)$&$n>2$&\\
\hline
$\frak{su}(2,2)$& $\{1,2,3\}$& $(1,2)$&& $(+,-,+)$\\
\hline
$\frak{so}(2n,\mathbb{C}),$& $\{1,2,n\}$& $(1,2)$& $\alpha_2,\alpha_n$& $(-,-,+)$,\\
$\frak{so}(n,n)$, $n>3$&&&$n>4$&$(+,-,+),(-,+,+)$\\
$\frak{so}(3,5)$&$\{2,3,4\}$&$(4,2)$&&$(-,+,+)$\\
$\frak{so}(7,\mathbb{C})$,& $\{1,2,3\}$& $(3,2)$&& $(-,-,-)$,\\
$\frak{so}(3,4)$&&&&$(-,+,-),(+,-,+)$\\
\hline
$\frak{sp}(2n,\{\mathbb{R, C}\})$& $\{1,2,p\}$& $(2,1)$& $\alpha_1,\alpha_p$& $(+,-,-)$,\\
$n>3$&$p<n$&&&$(+,+,-),(+,-,+)$\\
$\frak{sp}(2n,\{\mathbb{R, C}\})$& $\{1,2,n\}$& $(2,1)$& $\alpha_1,\alpha_n$& $(-,-,-)$,\\
&&&$n>3$&$(-,+,-),(+,-,+)$\\
\hline
$\frak{sl}(n+1,\{\mathbb{R, C}\})$& $\{1,2,p,q\}$& $(2,1)$& $\alpha_1,\alpha_p,\alpha_q$& $(-,\pm,-,+)$,\\
&&&$n>4,p>3,$&$(+,\pm,-,-)$, \\
&&&$p\neq n, q\neq n$&$(-,\pm,+,-)$\\
\hline
$\frak{su}(n,n+1)$& $\{1,2,$& $(2,1)$&& $(-,+,+,-)$\\
&$2n-1,2n\}$&&&\\
\hline
\end{tabular}
\end{table}

\begin{table}[H]
\caption{\bf The case $|\Xi|=3,4$, more component of $\kappa_H$}
\label{tab6}
\begin{tabular}{|c|c|c|c|c|c|}
\hline
$\fg$ & $\Xi$ & $\kappa_H$ &$\gamma$&$J$\\
\hline
$\frak{sl}(4,\{\mathbb{R,C}\})$ & $\{1,2,3\}$&$(2,1)$,$(2,3)$,&& $(-,+,-)$\\
&&$(1,3)$&&\\
$\frak{sl}(4,\{\mathbb{R,C}\})$ & $\{1,2,3\}$&$(2,1)$,$(2,3)$,&& $(+,-,+)$\\
&&$(1,2)$,$(3,2)$&&\\
$\frak{sl}(n+1,\{\mathbb{R,C}\})$& $\{1,2,p\}$&$(1,2)$,$(2,1)$& $\alpha_p$&$(+,-,+)$\\
&&&$n>p>3$&\\
\hline
$\frak{sl}(4,\{\mathbb{R,C}\})$ & $\{1,2,3\}$&$(2,1)$,$(2,3)$&& $(-,-,-)$\\
$\frak{su}(2,2)$ & $\{1,2,3\}$ &$(1,3)$,$(2,1)$&&$(-,+,-)$\\
$\frak{su}(2,2)$ &  $\{1,2,3\}$&$(1,2)$,$(2,1)$&&$(+,-,+)$\\
$\frak{sl}(n+1,\{\mathbb{R,C}\})$& $\{1,2,n\}$&$(1,2)$,$(1,n)$&&$(-,-,+)$\\
$\frak{sl}(n+1,\mathbb{R})$& $\{1,2,n\}$&$(1,n)$,$(2,1)$&&$(-,+,-)$\\
\hline
$\frak{so}(4,4)$,$\frak{so}(8,\mathbb{C})$& $\{1,2,4\}$&$(1,2)$,$(4,2)$&$\alpha_2$&$(+,-,+)$\\
\hline
$\frak{sl}(n+1,\{\mathbb{R,C}\})$& $\{1,2,n-1,n\}$&$(2,1)$,$(n-1,n)$& &$(-,-,-,+)$,\\
&&&&$(+,-,-,-)$,\\
&&&&$(-,+,+,-)$\\
\hline
\end{tabular}
\end{table}
\newpage

\end{document}